\documentclass[a4paper, reqno, 11pt]{amsart}
\usepackage[margin=1.3in]{geometry}
\usepackage{graphicx} 
\usepackage{amsthm,amsfonts,amssymb,amsmath,amsxtra,amsrefs}
\usepackage{mathrsfs}
\usepackage{graphicx}
\usepackage{tikz}
\usepackage{tikz-cd}
\usepackage{xr-hyper}
\usepackage{xr-hyper}
\usepackage[colorlinks=true, citecolor=blue]{hyperref}
\usepackage[noabbrev]{cleveref}
\usepackage{todonotes,cancel}
\usepackage[new]{old-arrows}

\newtheorem{thm}{Theorem}
\newtheorem{theorem}[thm]{Theorem}

\newtheorem{prop}[thm]{Proposition}

\newtheorem{lemma}[thm]{Lemma}

\newtheorem{cor}[thm]{Corollary}

\theoremstyle{definition}

\newtheorem{defi}[thm]{Definition}
\newtheorem{example}[thm]{Example}

\newtheorem{remark}[thm]{Remark}

\numberwithin{thm}{section}

\newcommand{\tUi}{\widetilde{\mathrm{U}}^\imath_n}

\newcommand{\X}{\mathcal{X}}
\newcommand{\OO}{\mathcal{O}}
\newcommand{\T}{\mathbf{T}}
\newcommand{\tT}{\mathbb{T}}
\newcommand{\Qq}{\mathbb{F}}
\newcommand{\tU}{\widetilde{\mathrm{U}}_n}
\newcommand{\ren}[1]{{:#1:}}
\newcommand{\cT}{{T}^{\text{cl}}}
\newcommand{\bT}{T}
\newcommand{\ZtUi}{{_\mathcal{A}\tUi}}
\newcommand{\ZtU}{{_\mathcal{A}\tU}}
\newcommand{\qcom}[2]{[#1,#2]_q}
\newcommand{\oT}{\sigma}
\newcommand{\dU}{\mathrm{U}_n}
\newcommand{\dUi}{{\mathrm{U}^\imath_{n}}}
\newcommand{\tK}{\widetilde{\Upsilon}}
\newcommand{\A}{\mathcal{A}}
\newcommand{\qD}{\mathcal{D}}

\newcommand{\tUio}{\widetilde{\mathrm{U}}_n^{\imath 0}}
\newcommand{\ZdUi}{{_\A \mathrm{U}^\imath_{n}}}
\newcommand{\ZdU}{{_\A \mathrm{U}_n}}
\newcommand{\FOX}{{_{\mathbb{F}}{\mathcal{O}_q(\mathcal{X}_{|\Sigma_n|})}}}
\newcommand{\AT}{_\A\T}

\newcommand\qarrow[2]{\draw[->,shorten >=2pt,shorten <=2pt] (#1) -- (#2) [thick];}
\newcommand\qdarrow[2]{\draw[->,dashed,shorten >=2pt,shorten <=2pt] (#1) -- (#2) [thick];}
\newcommand\qddarrow[2]{\draw[->,double equal sign distance,, shorten >=2pt,shorten <=2pt] (#1) -- (#2) [thick];}

\title{Cluster realisations of $\imath$quantum groups of type AI}
\author[Jinfeng Song]{Jinfeng Song}
\address{Department of Mathematics, National University of Singapore, Singapore.}
\email{j\_song@u.nus.edu}

\begin{document}


\begin{abstract}
    The $\imath$quantum group $\dUi$ of type $\textrm{AI}_n$ is a coideal subalgebra of the quantum group $U_q(\mathfrak{sl}_{n+1})$, associated with the symmetric pair $(\mathfrak{sl}_{n+1},\mathfrak{so}_{n+1})$. In this paper, we give a cluster realisation of the algebra $\dUi$. Under such a realisation, we give cluster interpretations of some fundamental constructions of $\dUi$, including braid group symmetries, the coideal structure, and the action of a Coxeter element. Along the way, we study a (rescaled) integral form of $\dUi$, which is compatible with our cluster realisation. We show that this integral form is invariant under braid group symmetries, and construct PBW-bases for the integral form.
\end{abstract}

\maketitle
\setcounter{tocdepth}{1}
\tableofcontents
\section{Introduction}

Cluster algebras, introduced by Fomin and Zelevinsky \cite{FZ01}, have found exciting connections to many fields of mathematics, and are now an active field of research. The quantum analogue of cluster algebras was studied by Fock--Goncharov \cite{FG09} and Berenstein--Zelevinsky \cite{BZ05}. 

Quantum groups, introduced by Drinfeld and Jimbo, have shown their significance in representation theory, low-dimensional topology, mathematical physics, etc. 

The study of the relations between quantum groups and quantum cluster algebras goes back to the original work of Berenstein and Zelevinsky \cite{BZ05}. Since then, this topic has received many attention and has been extensively studied. 

In the work \cite{SS19}, Schrader and Shapiro gave a cluster algebra realisation of the quantum group $\mathrm{U}$ associated with $\mathfrak{sl}_{n+1}$, that is, they constructed an explicit algebra embedding of $\mathrm{U}$ into a quotient of a quantum cluster algebra. Under the cluster realisation, they obtained cluster interpretations of the quasi $R$-matrix and the coproduct structure of the quantum group. Their constructions are based on the quantum cluster algebra structures associated with the moduli spaces of $\text{PGL}_{n+1}$-local systems on marked surfaces, studied by Fock and Goncharov \cites{FG03,FG09a}.

After establishing the framework to study the quantization of the moduli spaces of $G$-local systems on marked surfaces, Goncharov and Shen \cite{GS22} generalised the construction of Schrader and Shapiro to other finite type Lie algebras. Moreover, they showed Lusztig's braid group symmetries on quantum groups are quasi-cluster automorphisms, that is, they can be expressed as compositions of cluster mutations, permutations of cluster variables, and renormalizations of frozen variables. They also obtained natural Weyl group actions on the corresponding quantum cluster algebras, and conjectured that the images of the cluster realisations are exactly the Weyl group invariants. The conjecture was proved by Shen \cite{Sh22}, in simply laced cases. His proof made an essential use of the quantum cluster duality proved by Davison and Mandel \cite{DM21}.

The notion of \emph{quantum symmetric pairs} was introduced by Letzter \cites{Let99,Let03}. Associated with a symmetric pair $(\mathfrak{g},\mathfrak{g}^\theta)$ of a Lie algebra $\mathfrak{g}$, the quantum symmetric pair $(\mathrm{U},\mathrm{U}^\imath)$ is the pair, consisting of the Drinfeld--Jimbo quantum group $\mathrm{U}$ associated with $\mathfrak{g}$, and a coideal subalgebra $\mathrm{U}^\imath\subset \mathrm{U}$. The subalgebra $\mathrm{U}^\imath$ is referred to as an \emph{$\imath$quantum group}. Following the classification of symmetric pairs of complex simple Lie algebras, the $\imath$quantum groups are classified in terms of Satake diagrams. Quantum groups are $\imath$quantum groups of diagonal types. Therefore the notion of $\imath$quantum groups is a vast generalisation of the notion of quantum groups. Many important properties and applications of quantum groups have found their generalisations in $\imath$quantum groups (see for example \cites{BW18,LW22a,WZ22}). We refer to the survey \cite{Wan21} for the current state of this field.  

The constructions of Schrader--Shapiro and Goncharov--Shen then can be interpreted as the cluster realisations of $\imath$quantum groups of diagonal types $(\mathrm{U}\otimes\mathrm{U},\mathrm{U})$. It is thus natural to expect cluster realisations of $\imath$quantum groups of other types, where fundamental constructions admit cluster interpretations. In the current paper, we construct such a cluster realisation for the $\imath$quantum group of type $\text{AI}_n$, that is, the type associated with the symmetric pair $(\mathfrak{sl}_{n+1},\mathfrak{so}_{n+1})$.

Set $\Qq=\mathbb{Q}(q^{1/2})$ to be the base field. Let $\tU$ be the Drinfeld double quantum group associated with the Lie algebra $\mathfrak{sl}_{n+1}$. It is a Hopf algebra over $\Qq$, with generators $E_i,F_i,K_i,K_i'$, for $1\leq i\leq n$, where $K_iK_i'$ are central elements. Let $\tUi$ be the subalgebra of $\tU$, generated by elements $$B_i=F_i-q^{-1}E_iK_i',\quad k_i=K_iK_i',\quad \text{for } 1\leq i\leq n.$$ Following Lu and Wang \cite{LW22a}, the subalgebra $\tUi$ is called the \emph{universial $\imath$quantum group} of type $\text{AI}_n$. Let us mention that our choice of generators differ by scalars with standard ones (see Remark \ref{rmk:rei}). The Drinfeld--Jimbo quantum group $\dU$ can be recovered by the central reduction $\dU=\tU/(K_iK_i'-1\mid 1\leq i\leq n).$ The $\imath$quantum group $\dUi$ is a subalgebra of $\dU$, which can be obtained by a central reduction of $\tUi$ as well.

Now let us move on to the cluster algebras. Set $\A=\mathbb{Z}[q^{1/2},q^{-1/2}]$. Starting with a quiver $Q$, one can consider the associated \emph{quantum torus algebra} $\T_Q$. This is a non-commutative algebra, where the commuting relations are encoded by the quiver $Q$. The quantum cluster algebra $\mathcal{O}_q(\mathcal{X}_{|Q|})$ is a certain $\A$-subalgebra of $\T_Q$. 

The building block of the cluster realisations of Schrader and Shapiro is the quiver associated with the $n$-triangulation of a triangle. Such a quiver is referred to as a Fock--Goncharov triangle (see Section \ref{sec:clq}). In their work, they considered gluing two Fock--Goncharov triangles along their edges. In our work, we consider gluing one Fock--Goncharov triangle with itself. We call this gluing procedure the \emph{self-amalgamation}.


Let us now state the first main result of the paper.

\begin{thm}(Theorem \ref{thm:emb}, Corollary \ref{cor:fui})\label{thm:main}
    Let $\Sigma_n$ be the quiver obtained by the self-amalgamation of one Fock--Goncharov triangle along its two edges (see \S \ref{sec:quiv}). We explicitly construct a $\mathbb{F}$-algebra embedding of $\tUi$ into the base change of the quantum cluster algebra $\FOX=\mathbb{F}\otimes_\A \mathcal{O}_q(\mathcal{X}_{|\Sigma_n|})$. After the central reduction, we get an injection of the algebra $\dUi$ into a quotient of $\FOX$.
\end{thm}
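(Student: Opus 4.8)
The plan is to produce the embedding by explicit formulae: assign to each generator $B_i$ and $k_i$ of $\tUi$ a concrete element of the quantum torus $\T_{\Sigma_n}$ built from the cluster variables attached to the self-amalgamated Fock--Goncharov triangle, and then verify that (a) these elements actually land in the subalgebra $\FOX$, (b) they satisfy the defining relations of $\tUi$, and (c) the resulting map is injective. For (a) one checks that each candidate image is a Laurent polynomial in the appropriate cluster (a balanced sum of monomials, e.g.\ of the ``$q$-commuting telescoping'' shape familiar from \cite{SS19}), so it lies in $\mathcal{O}_q(\mathcal{X}_{|\Sigma_n|})$ after base change. For (b), the relations of $\tUi$ of type $\mathrm{AI}_n$ are: the $q$-Serre-type $\imath$relations among the $B_i$ (the length-three relation with the correction term involving $k_i$), commutation of $B_i$ and $B_j$ for $|i-j|\ge 2$, centrality of the $k_i$, and the relations expressing how $k_i$ conjugates $B_j$. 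Each of these is a finite identity in $\T_{\Sigma_n}$ that reduces, after using the quasi-commutation relations read off from the quiver $\Sigma_n$, to a check local to one or two Fock--Goncharov triangles; here I would lean on the computations already available for the non-amalgamated picture and isolate precisely the new phenomena created by the self-gluing of the two edges.

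The conceptual way to organise (b) is to factor the map through Schrader--Shapiro's embedding of the quantum group. Concretely, the Fock--Goncharov triangle with its two edges glued is, combinatorially, the image of the triangulated square (two triangles glued along one edge) under the folding induced by the diagram automorphism defining the symmetric pair $(\mathfrak{sl}_{n+1},\mathfrak{so}_{n+1})$. So I expect a commuting square: $\tUi \hookrightarrow \tU$ on the algebra side, $\FOX \to$ (cluster algebra of the doubled triangle) on the geometric side, with the vertical maps being the self-amalgamation and the Schrader--Shapiro embedding respectively. If such a square can be set up, then injectivity of our map (step (c)) follows from injectivity of Schrader--Shapiro's embedding together with injectivity of $\tUi\hookrightarrow\tU$, modulo controlling the map on cluster algebras. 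Alternatively, and more self-containedly, injectivity can be obtained by exhibiting a filtration (e.g.\ by the weight/degree coming from the $F_i$-part) on which the associated graded of our map is visibly injective — for instance by showing the images of an explicit spanning set of $\tUi$ (a PBW-type basis, which the abstract promises to construct) are linearly independent in $\T_{\Sigma_n}$, the latter being an Ore domain.

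For the second, central-reduction statement: the elements $k_i=K_iK_i'$ map to distinguished (frozen) cluster monomials $z_i$ in $\T_{\Sigma_n}$; I would show the two-sided ideal of $\FOX$ these generate intersects the image of $\tUi$ exactly in the ideal $(k_i-1)$, so that passing to the quotient $\FOX/(z_i-1)$ we get an induced map from $\dUi=\tUi/(k_i-1)$, and injectivity descends because the quotient on the cluster side is again (a localisation of) a quantum torus quotient with no new relations among the surviving variables. The main obstacle, I expect, is step (b) in the neighbourhood of the glued edges — the self-amalgamation identifies variables that were independent in the two-triangle picture, which can create new quasi-commutation relations and potential sign/power-of-$q$ discrepancies in the $\imath$-Serre relation; getting the correction term proportional to $k_i$ to come out with exactly the right coefficient is where the real work lies, and it is presumably the content of the referenced Theorem~\ref{thm:emb}.
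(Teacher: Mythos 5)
Your steps (b) and (c), in their self-contained versions, do match the paper: the relations of $\tUi$ are verified by direct computation with the quasi-commutation relations of the telescoping monomials $P_{i,k}=\ren{X_{i,0}X_{i,1}\cdots X_{i,k}}$ (Proposition \ref{prop:emq}), and injectivity is obtained exactly as in your fallback, by showing that the images of the PBW-basis elements $B_{\mathbf{i}}(\mathbf{a})k(\mathbf{b})$ have pairwise distinct leading monomials in the quantum torus algebra (Proposition \ref{prop:inj}). By contrast, your ``conceptual'' route --- a folding of the doubled triangle inducing a map of cluster algebras compatible with $\tUi\hookrightarrow\tU$ --- is not set up in the paper and is hedged even in your own write-up; the relation between $\Sigma_n$ and $\qD_n$ that the paper actually establishes is an amalgamation realizing the coproduct $\Delta:\tUi\to\tUi\otimes\tU$ (Proposition \ref{prop:cop}), not a quotient or folding map $\T_{\qD_n}\to\T_{\Sigma_n}$, so I would not lean on that square existing.

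The genuine gap is in your step (a). By definition, $\mathcal{O}_q(\mathcal{X}_{|\Sigma_n|})$ consists of elements that remain Laurent polynomials after \emph{every} finite sequence of cluster mutations, so exhibiting $\iota(B_i)$ as a Laurent polynomial in the initial chart proves nothing about membership. The paper's Proposition \ref{prop:qcl} is devoted precisely to this point: one first transfers the Berenstein--Zelevinsky one-step mutation criterion from quantum cluster $\mathcal{A}$-algebras to the $\mathcal{X}$-setting via the Goncharov--Shen framing and the cluster ensemble map, and then checks by explicit computation that $\mu(\iota(B_i))$ is again Laurent for every single one-step mutation $\mu$ (the $\iota(k_i)$ are central monomials and are handled by \cite{GS22}). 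This cannot be bypassed by appeal to the quantum group case: as the paper remarks, unlike the Chevalley generators $E_i,F_i$, the elements $B_i$ do not become cluster monomials after any mutation sequence, so Laurentness in other charts is a genuine computation. A smaller point: in the paper's conventions the central reduction sends $k_i\mapsto -1$, i.e. $\iota(k_i)=-C_i$ and the quotient on the cluster side is by $(C_i-1)$ rather than $(k_i-1)$; your sketch of why injectivity descends is the right idea, but it still requires checking that the ideal $(C_i-1,\,1\leq i\leq n)$ meets $\iota(\tUi)$ exactly in the image of $\ker\pi^\imath$.
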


The embedding in Theorem \ref{thm:main} is not surjective, and it remains open to determine its image. This question seems to be more involved than the similar question in the setting of quantum groups, settled by Shen in simply-laced cased \cite{Sh22}. The main difficulty is that we do not have a geometric framework for our cluster realisations.


Based on the cluster realisation in Theorem \ref{thm:main}, our second result is the cluster interpretation of some fundamental constructions of $\dUi$. To start with, let us recall some basic properties of $\imath$quantum groups.

Firstly, the $\imath$quantum groups are coideal subalgebras, that is, the coproduct of $\dU$ restricts to the map 
$\dUi\rightarrow\dUi\otimes\dU$.

Secondly, the $\imath$quantum groups admit relative braid group symmetries. These symmetries generalise Lusztig's braid group symmetries of quantum groups, and have been widely studied (see \cites{KP11,Dob20,LW22}). In the recent work \cite{WZ22}, Wang and Zhang gave intrinsic constructions of such symmetries. In type AI, the relative braid group coincides with the whole braid group. Therefore we get braid group actions on the algebra $\dUi$. 

Thirdly, the braid group symmetries in type AI also showed in an earlier work of Chekhov \cite{Ch07}, where he observed an additional cyclic symmetry, given by the braid group action of a Coxeter element (see Section \ref{sec:ace}).

All these properties turn out to admit interesting cluster theoretic interpretations. Let us summarise them in the following theorem.

\begin{thm}\label{thm:cra}(Proposition \ref{prop:cop}, Theorem \ref{thm:braid}, Proposition \ref{prop:rho})
    Under the embedding in Theorem \ref{thm:main}, we obtain cluster interpretations of the following fundamental constructions of $\imath$quantum groups.

\begin{itemize}
    \item The coideal structure is interpreted as the amalgamation of two quivers.
    \item Braid group actions are expressed as quasi-cluster automorphisms, that is, they are compositions of cluster mutations, permutations of cluster variables, and renormalizations of frozen variables.
    \item The braid group action associated with a Coxeter element is expressed as the cyclic symmetry of the quiver.
\end{itemize}
\end{thm}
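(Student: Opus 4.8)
The statement assembles three results established separately in the body of the paper, namely Proposition~\ref{prop:cop}, Theorem~\ref{thm:braid}, and Proposition~\ref{prop:rho}, so I describe the plan for each. A common simplification is available throughout: since the map of Theorem~\ref{thm:main} is an \emph{algebra} embedding, any claimed identity between two algebra homomorphisms out of $\tUi$ (or $\dUi$) need only be checked on the generators $B_i$ and $k_i$, and since the defining relations among the $B_i$ couple only neighbouring indices, these verifications localise to the rank $1$ and rank $2$ sub-pictures of $\Sigma_n$.

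\emph{Coideal structure.} The plan is to first recall the amalgamation formalism: gluing two quivers along a common set of frozen vertices produces a homomorphism of the associated quantum torus algebras, compatibly with the inclusions of the quantum cluster algebras. The quiver attached by Schrader--Shapiro to $\tU$ is two Fock--Goncharov triangles glued along an edge; I would form the amalgamation of $\Sigma_n$ with this quiver along the appropriate frozen boundary and produce an explicit morphism realising $\Sigma_n$ inside it, inducing a map $\FOX\to \FOX\otimes(\text{cluster algebra of }\tU)$. It then remains to match this map with the coproduct $\dUi\to\dUi\otimes\dU$ on the generators $B_i,k_i$: a direct comparison of the explicit cluster expressions of Theorem~\ref{thm:main} with the standard coproduct formulas.

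\emph{Braid symmetries.} Following the blueprint of Goncharov--Shen for Lusztig's operators, for each $i$ I would write down an explicit sequence of mutations, together with a permutation of the vertices of $\Sigma_n$ and a rescaling of the frozen variables, forming a quasi-cluster transformation from $\Sigma_n$ to itself, and then check that the induced automorphism of $\FOX$ restricts on the image of $\tUi$ to the braid symmetry $T_i$. By the locality remarked above this reduces to a computation on $B_i,B_{i\pm1},k_i,k_{i\pm1}$. One must then verify the braid relations at the cluster level: the commutations for non-adjacent indices follow because the mutation sequences have disjoint support, while $T_iT_{i+1}T_i=T_{i+1}T_iT_{i+1}$ requires showing the two composite mutation sequences coincide up to trivial mutations and a common relabelling.

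\emph{Coxeter element, and the main obstacle.} For the last bullet the plan is to exhibit the graph automorphism $\rho$ of $\Sigma_n$ arising from the rotational symmetry built into the self-amalgamation, verify that it preserves the quiver together with its frozen data, and then check on generators that the induced automorphism of $\FOX$ coincides with the braid action of the Coxeter element. I expect the braid-symmetry statement to be the principal difficulty: identifying a mutation sequence for each $T_i$ that is compatible with self-amalgamation, and tracking the permutations and frozen rescalings through the long composites needed to verify the braid relation. By contrast the coideal and Coxeter statements should be essentially formal once the amalgamation calculus and the quiver automorphism have been set up.
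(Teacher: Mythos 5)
Your overall strategy coincides with the paper's: the coproduct is realised by amalgamating $\Sigma_n$ with the Schrader--Shapiro quiver $\qD_n$ along the frozen vertices $V_i\leftrightarrow X_{i,0}$ and checking the resulting formulas on $B_i,k_i$ (Proposition \ref{prop:cop}); each braid operator is realised as $\cT_i=M_i\circ S_i\circ\widetilde{\mu}_i$, a mutation sequence followed by a vertex swap and a frozen rescaling (Theorem \ref{thm:braid}); and the Coxeter element is realised by a rotation of the quiver (Proposition \ref{prop:rho}). However, several of your claims about how the verification proceeds are incorrect or underestimate the work. You do not need to ``verify the braid relations at the cluster level'': the theorem only asserts that each individual $T_i$ is quasi-cluster, and the braid relations among the $T_i$ on $\tUi$ are already known from \cite{WZ22}; once $\cT_i|_{\iota(\tUi)}=\iota\circ T_i\circ\iota^{-1}$ is established they are inherited automatically. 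More seriously, the assertion that for $|i-j|>1$ ``the mutation sequences have disjoint support'' is false for $\Sigma_n$: the self-amalgamation forces the identifications $X_{j,j-i}=X_{i,n+i-j+1}$ of \eqref{eq:relabel}, so the loops carrying $\iota(B_i)$ and $\iota(B_j)$ intersect for \emph{every} pair $i\neq j$, and the paper must perform a genuine computation (\eqref{eq:Tij}--\eqref{eq:cTBij}) to show that $\cT_i$ nevertheless fixes $\iota(B_j)$ when $|i-j|>1$. For the same reason the ``locality to rank $1$ and $2$ sub-pictures'' heuristic fails: each $\iota(B_i)$ is a sum of $n+1$ monomials winding around the entire quiver and $\widetilde{\mu}_i$ has length $2n-3$, so the computations of Lemma \ref{le:T} are global in $n$.

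Two further points. The cyclic symmetry is \emph{not} an automorphism of $\Sigma_n$ itself: the rotation of order $n+1$ only exists on an augmented quiver $\Sigma_n'$ obtained by adjoining an extra vertex $X_0$, and one must then descend along $X_0\mapsto M_n^{-1}$ modulo the central relation $M_n'=1$; moreover matching $\rho_n$ with $\bT_c$ on $B_n$ and $k_n$ is not formal but requires the closed formula for $\bT_1\cdots\bT_{n-1}(B_n)$ proved by induction. Finally, the piece you leave entirely unspecified --- the explicit sequence $\widetilde{\mu}_i=\mu_{i,2}\cdots\mu_{i,n}\cdots\mu_{i,2}$, the swap $S_i$ of $X_{i,1}$ with $X_{i,n}$, and the frozen renormalization $M_i$ --- is precisely where the substance of Theorem \ref{thm:braid} lies, so as it stands the proposal is a correct outline of the paper's route rather than a proof.
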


Our third result is on rescaled integral forms of the $\imath$quantum groups.

Since the quantum cluster algebras are originally defined over the ring $\A$, it is natural to study the $\A$-forms of the $\imath$quantum groups which are compatible with the cluster realisations. For quantum groups, Shen \cite{Sh22} constructed such integral forms using the $\A$-spans of the \emph{rescaled} PBW-bases (see \eqref{eq:PBW0}). These rescaled integral forms are different from the Lusztig's integral forms (see \cite{Lu94}*{3.1.13}). To see this, we remark that the specialisations, $q\mapsto 1$, of the rescaled integral forms are commutative, while the specialisations of the Lusztig's forms are not.

Let $\ZdUi$ be the integral form of the $\imath$quantum group, which is induced from the rescaled integral form of the quantum group (see Proposition \ref{prop:inti}). This integral form is different from the integral form studied by Bao and Wang in \cite{BW18}.

Now we state our last result.

\begin{thm}(Corollary \ref{cor:iau}, Corollary \ref{cor:cen})\label{thm:int}
    The $\A$-subalgebra $\ZdUi$ of $\dUi$ satisfies the following properties.
\begin{itemize}
    \item It is invariant under the braid group symmetries of $\imath$quantum groups.
    \item It admits PBW-bases.
    \item It is compatible with the cluster realisation in Theorem \ref{thm:main}, that is, the embedding of $\dUi$ restricts to an $\A$-algebra embedding of $\ZdUi$ into the quotient of the quantum cluster algebra $\mathcal{O}_q(\mathcal{X}_{|\Sigma_n|})$.
\end{itemize}
\end{thm}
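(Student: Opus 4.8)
The plan is to establish the three properties of $\ZdUi$ in sequence, building on the rescaled integral form $\ZdU$ of the quantum group and on the cluster realisation of Theorem~\ref{thm:main}. First I would set up the integral form: recall from Proposition~\ref{prop:inti} that $\ZdUi$ is defined as the intersection $\dUi\cap\ZdU$ (or equivalently as the $\A$-span of suitable rescaled monomials in the $B_i,k_i$), and verify that it is indeed an $\A$-subalgebra, closed under multiplication. The key structural input is that the rescaled PBW-basis of $\ZdU$ is compatible with the coideal embedding, so that the $\imath$-divided powers $B_i^{(m)}$ (rescaled appropriately so that their images under $\imath\mapsto$ the ambient quantum group lie in $\ZdU$) generate an $\A$-form.

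For the braid-group invariance, I would argue as follows. By Theorem~\ref{thm:cra}, each braid generator $T_i$ acting on $\dUi$ is a quasi-cluster automorphism of the quotient of $\mathcal{O}_q(\mathcal{X}_{|\Sigma_n|})$ into which $\ZdUi$ embeds. Cluster mutations, permutations of cluster variables, and renormalisations of frozen variables by monomials in $q^{\pm 1/2}$ all preserve the $\A$-form $\mathcal{O}_q(\mathcal{X}_{|\Sigma_n|})$ of the quantum cluster algebra. Hence the composition preserves the integral structure, and since the braid action restricts to $\dUi$ (being induced from the ambient quantum group's braid action, which preserves $\ZdU$ by \cite{Sh22}), it restricts to $\ZdUi$. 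One should double-check here that the rescaling factors appearing in the quasi-cluster expression of $T_i$ are genuinely in $\A$ and not merely in $\Qq$; this is where a careful bookkeeping of the $q$-powers is needed. Alternatively, and perhaps more cleanly, one can prove braid invariance purely algebraically: check that each $T_i$ sends the rescaled generators $B_j^{(m)}$, $k_j$ to $\A$-linear combinations of PBW monomials, using the explicit formulas for $T_i$ on $\imath$quantum groups from \cite{WZ22} together with the compatibility of rescaling with $T_i$ on $\ZdU$.

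For the PBW-basis claim, I would fix a reduced expression of the longest Weyl group element $w_0$, use the braid symmetries $T_i$ (now known to preserve $\ZdUi$) to define root vectors, and show that the ordered monomials in these root vectors, rescaled by appropriate powers of $q^{1/2}$, form an $\A$-basis of $\ZdUi$. The standard strategy is: (i) these monomials are $\Qq$-linearly independent because they specialise from the known PBW-basis of $\dUi$; (ii) their $\A$-span is closed under multiplication, using straightening relations whose structure constants one checks lie in $\A$ after rescaling — this reduces to a finite rank-two computation plus a Levi induction; (iii) their $\A$-span equals $\ZdUi$ because it contains the generators and is contained in $\ZdUi$, and the reverse containment follows from the fact that $\ZdU$ has its rescaled PBW-basis and $\ZdUi = \dUi \cap \ZdU$ picks out exactly the $\imath$-PBW monomials.

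Finally, the compatibility with the cluster realisation is essentially a matter of tracking denominators: Theorem~\ref{thm:main} gives an embedding $\dUi \hookrightarrow \FOX = \mathbb{F}\otimes_\A \mathcal{O}_q(\mathcal{X}_{|\Sigma_n|})$, and one must check that the images of the rescaled generators $B_i$, $k_i$ already lie in the $\A$-form $\mathcal{O}_q(\mathcal{X}_{|\Sigma_n|})$ (or its relevant quotient), with no denominators in $q^{1/2}$; then the image of $\ZdUi$, being $\A$-generated by these, lands in $\mathcal{O}_q(\mathcal{X}_{|\Sigma_n|})$, and injectivity is inherited from Theorem~\ref{thm:main}. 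This last point is in fact the motivation for choosing the rescaled integral form in the first place, so the embedding formulas should be manifestly integral; the only subtlety is the frozen-variable renormalisations in the quotient, which one must confirm do not introduce fractional powers. The main obstacle among these steps is the PBW-basis construction — specifically verifying that the straightening/commutation relations among rescaled root vectors have structure constants in $\A$, since in the $\imath$-setting these relations are genuinely more complicated than in the quantum group case (they involve lower-order correction terms coming from the coideal structure), and controlling the $q$-powers in those corrections requires the careful rank-two analysis together with the braid-group reduction already established.
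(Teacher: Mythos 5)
There is a genuine gap, and it sits exactly where you flag ``the main obstacle'': your argument for braid-group invariance does not close. The integral form is defined as $\ZdUi=\dUi\cap\ZdU$, where $\ZdU$ is the rescaled PBW integral form of the \emph{quantum group}. Your first route deduces invariance from the quasi-cluster nature of $T_i$ on the quotient of $\mathcal{O}_q(\mathcal{X}_{|\Sigma_n|})$; but that only shows $T_i$ preserves $\dUi\cap\mathcal{O}_q(\mathcal{X}_{|\Sigma_n|})$, and identifying this intersection with $\dUi\cap\ZdU$ is precisely the third bullet (Corollary \ref{cor:emq}/\ref{cor:cen}), whose proof in the paper in turn \emph{uses} braid invariance (via Lemma \ref{le:inti}). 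So this route is circular. Your fallback route reduces to showing that the rescaled root vectors $T_{i_1}\cdots T_{i_{k-1}}(B_{i_k})$ --- which involve denominators $q-q^{-1}$ --- and their ordered products have structure constants in $\A$; you correctly identify this straightening problem as hard, but you do not resolve it, and the paper deliberately avoids it. Note also that $\ZtUi$ is \emph{not} $\A$-generated by the $B_i,k_i$ alone (that is why condition (b) appears in Lemma \ref{le:inti}), and the $\imath$-divided powers $B_i^{(m)}$ you invoke belong to the Bao--Wang integral form, which the paper explicitly distinguishes from the rescaled one (whose specialisation at $q=1$ is commutative).

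The paper's key idea, absent from your proposal, is Lemma \ref{le:Klog}: the rank-one quasi $K$-matrix equals the quantum dilogarithm, $\tK_i=\Psi_{q^2}(-E_i^2)$. Combined with the intertwining relation $\oT_i^{-1}(x)\tK_i=\tK_i\bT_i^{-1}(x)$, one gets $t(\bT_i^{-1}(x))=\mathrm{Ad}_{\Psi_{q^2}(-V_i^2)}\bigl(t(\sigma_i^{-1}(x))\bigr)$ in the cluster chart of the \emph{quantum group} quiver $\qD_n$ where $t(E_i)$ is the frozen variable $V_i$. Since $t(\tUi)$ lies in the sub-skew-field generated by unfrozen variables, right frozen variables, and \emph{squares} of left frozen variables, the adjoint action by the dilogarithm contributes only a finite product of factors $(1+(-1)^kq^{4k-2}V_i^{2k})$, and comparing coefficients yields integrality of $\bT_i^{-1}(x)$; invariance under $\bT_i$ then follows from the cyclic identity $\bT_c^{n+1}=\mathrm{id}$. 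With invariance in hand, the PBW $\A$-basis (Corollary \ref{cor:Abase}) follows from the triangularity $B_{\mathbf{i}}(\mathbf{a})=F_{\mathbf{i}}(\mathbf{a})+\text{lower terms}$, and the cluster compatibility follows from the minimality characterisation of $\ZtUi$ together with Theorem \ref{thm:braid}. Your outline has the right global shape (intersection definition, invariance, PBW, compatibility), but without the dilogarithm expression of $\tK_i$ and the passage through the $\qD_n$ realisation, the invariance step --- on which everything else depends --- is not established.
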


Let us mention that, although the first and the second statements seem unrelated to cluster algebras, our proof relies crucially on the cluster realisations of the quantum groups. In fact, the key ingredient to the proof is an expression of the quasi $K$-matrix in terms of quantum dilogarithm (see Lemma \ref{le:Klog}). We believe this expression holds its own interest, and it would also be intriguing to explore analogous expressions for other types. The proof of the last statement makes use of the quasi-cluster nature of the braid group symmetries of $\imath$quantum groups, established in Theorem \ref{thm:cra}.

In \cite{LW22a}, Lu and Wang constructed dual PBW-bases for quasi-split $\imath$quantum groups, based on their Hall algebra realisations of $\imath$quantum groups. It would be interesting to see the connections with their work.  

Our constructions are inspired by results in Poisson geometry. It is known that the quantum groups are quantization of the dual Poisson group $G^*$. From this point of view, the $\imath$quantum groups are quantization of certain Poisson homogeneous spaces of $G^*$. In type AI, the corresponding Poisson homogeneous spaces turn out to be the spaces of the unipotent upper-triangular matrices, equipped with the Poisson structures introduced by Dubrovin \cite{Dub96} (see also \cite{Ug99}). In the recent work \cite{CS23}, Chekhov and Shapiro constructed log-canonical coordinates for such Poisson spaces (see also \cite{BT22} from an analytic point of view). Our cluster realisations are highly motivated by the constructions of Chekhov and Shapiro.

The precise connections between $\imath$quantum groups of general types and Poisson homogeneous spaces will be treated in a separate paper. On the other hand, constructing cluster realisations for general type $\imath$quantum groups seems to be more challenging. 

The paper is organised as follows. In Section \ref{sec:2}, we collect known facts on quantum cluster algebras, quantum groups and quantum symmetric pairs. Section \ref{sec:4} is devoted to study the rescaled integral forms of $\imath$quantum groups. In \ref{sec:pbw}, we construct PBW-bases for $\imath$quantum groups. In \ref{sec:qK}, we relate quasi $K$-matrices with quantum dilogarithm. In \ref{sec:intf}, we construct integral forms of $\imath$quantum groups and show that they are invariant under braid group symmetries. Section \ref{sec:5} is the main part of the paper. In \ref{sec:albe}, we construct an explicit algebra embedding of $\tUi$ into the base change of a quantum cluster algebra. In \ref{sec:coid}, we interpret the coideal structure in terms of the amalgamation of two quivers. In \ref{sec:qucl}, we show that braid group actions on $\imath$quantum groups are quasi-cluster, and as a consequence, we show that the rescaled integral forms of $\imath$quantum groups are compatible with our cluster realisations. In \ref{sec:rho}, we explain that the braid group action associated with a Coxeter element corresponds to the cyclic symmetry of the quiver. 

\vspace{.2cm}\noindent {\bf Acknowledgement: }The author is supported by Huanchen Bao’s MOE grant A-0004586-00-00 and A-0004586-01-00.

\section{Preliminaries}\label{sec:2}

\subsection{Quantum cluster algebras}\label{sec:defcl}

We recall basic facts of quantum cluster algebras in this subsection. 

A \emph{seed} consists of a triple $\Sigma= (I,I_0,\varepsilon)$, where $I$ is a finite set, $I_0\subset I$ is a subset, and $\varepsilon=(\varepsilon_{ij})_{i,j\in I}$ is a skew-symmetric $\mathbb{Z}/2$-valued matrix, such that $\varepsilon_{ij}\in\mathbb{Z}$ unless $i,j\in I_0$. 

For each $k\in I\backslash I_0$, the \emph{seed mutation} in the direction $k$ produces a new seed $\mu_k \Sigma=(I,I_0,\varepsilon')$ where
\begin{equation*}
    \varepsilon'_{ij}=\begin{cases}
        -\varepsilon_{ij}, & \text{if} \; k\in\{i,j\}, \\
        \varepsilon_{ij}, & \text{if} \; \varepsilon_{ik}\varepsilon_{kj}\leq0,\; k\notin\{i,j\},\\
        \varepsilon_{ij}+|\varepsilon_{ik}|\varepsilon_{kj}, & \text{if} \; \varepsilon_{ik}\varepsilon_{kj}>0, \; k\notin\{i,j\}.
    \end{cases}
\end{equation*}

One can encode the seed $\Sigma=(I,I_0,\varepsilon)$ by a quiver, with vertices labelled by elements in $I$, and with the adjacency matrix $\varepsilon$. Vertices in $I_0$ are called \emph{frozen vertices} and are usually denoted by boxes in the quiver. The mutation $\mu_k$ of the corresponding quiver can be performed in three steps:

1) reverse all the arrows incident to the vertex $k$;

2) for each pair of arrows $k\rightarrow i$ and $j\rightarrow k$ draw an arrow $i\rightarrow j$;

3) delete pairs of arrows $i\rightarrow j$ and $j\rightarrow i$ going in the opposite directions.

\begin{figure}[h]
\begin{tikzpicture}
\begin{scope}[>=latex]
 \node[circle,draw] (C) at (0,0) {\tiny{1}};
 \node[circle,draw] (D) at (2,0) {\tiny{2}};
 \node[rectangle,draw] (A) at (4,0.8) {\tiny{3}};
 \node[rectangle,draw] (B) at (4,-0.8) {\tiny{4}};
\qddarrow{D}{C};
\qarrow{D}{A};
\qarrow{B}{D};
\qdarrow{A}{B};
\qarrow{C}{B}
\end{scope}
\draw[thick,|->] (5,0) -- (6,0);
\node at (5.5,0.5) {$\mu_2$};
\begin{scope}[>=latex, xshift=7cm]
 \node[circle,draw] (C) at (0,0) {\tiny{1}};
 \node[circle,draw] (D) at (2,0) {\tiny{2}};
 \node[rectangle,draw] (A) at (4,0.8) {\tiny{3}};
 \node[rectangle,draw] (B) at (4,-0.8) {\tiny{4}};
\qddarrow{C}{D};
\qarrow{A}{D};
\qarrow{D}{B};
\qdarrow{B}{A};
\qarrow{B}{C};
\end{scope}
\end{tikzpicture}
\caption{A quiver mutation in the direction 2. Dashed arrows between frozen vertices denote arrows of weight $\frac{1}{2}$.}
\end{figure}
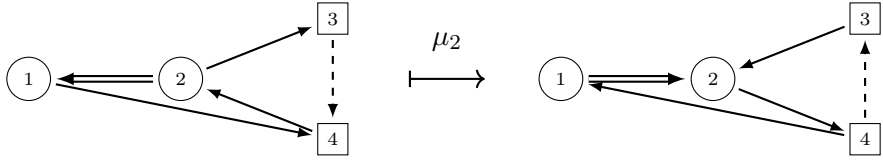

Take a quiver $\Sigma=(I,I_0,\varepsilon)$. Set $\Qq=\mathbb{Q}(q^{1/2})$. Define the \emph{quantum torus algebra} $\T_\Sigma$ to be the unital non-commutative $\Qq$-algebra, which is generated by $X_i^{\pm1}$, for $i\in I$, and subjects to relations
$$ X_iX_j=q^{2\varepsilon_{ji}}X_jX_i\qquad \text{for }i,j\in I.$$ Denote by $\tT_\Sigma$ the non-commutative fraction field of $\T_\Sigma$.

Set $\A=\mathbb{Z}[q^{1/2},q^{-1/2}]$. The $\Qq$-algebra $\T_\Sigma$ has a natural $\A$-form $\AT_\Sigma$, which is the $\A$-subalgebra of $\T_\Sigma$, generated by $X_i^{\pm1}$, for $i\in I$. 

For a quiver $\Sigma$, there is a nature $*$-algebra structure on $\T_{\Sigma}$. Namely, there is an anti-automorphism $*$ on $\T_{\Sigma}$, which preserves all the generators $X_i$, and maps $q$ to $q^{-1}$. For any Laurent monomial $$M=\prod_{i\in v(\Sigma)}X_i^{a_i}\qquad \text{in }\T_{\Sigma},$$ where $v(\Sigma)$ denotes the set of vertices of $\Sigma$, $a_i$ are integers, and the product is taken in a certain order, there exists a unique element $\ren{M}$ in $\tT_{\Sigma}$, such that, 

1) it equals $M$ up to a scalar in $q^{\mathbb{Z}}$; 

2) it is fixed by the automorphism $*$. 

The element $\ren{M}$ is called the \emph{renormalized monomial} of $M$.

For $k\in I\backslash I_0$, write $\Sigma'=\mu_k\Sigma$. The \emph{cluster mutation} in the direction $k$ is an isomorphism of skew fields $\mu_k: \tT_{\Sigma'}\rightarrow\tT_\Sigma$, given by
\begin{equation*}
    \mu_k(X_i)=\begin{cases}
        X_k^{-1}, & \text{if}\;i=k,\\
        X_i\prod_{r=1}^{\varepsilon_{ki}}(1+q^{2r-1}X_k^{-1})^{-1}, & \text{if}\; i\neq k\text{ and }\varepsilon_{ki}\geq 0,\\
        X_i\prod_{r=1}^{-\varepsilon_{ki}}(1+q^{2r-1}X_k), & \text{if}\;i\neq k\text{ and }\varepsilon_{ki}\leq 0.
    \end{cases}
\end{equation*}

The \emph{quantum cluster algebra} $\OO_q(\X_{|\Sigma|})$ associated with the quiver $\Sigma$ is the $\A$-subalgebra of $\AT_\Sigma$, which consists of elements that remain to be Laurent polynomials after any finite sequence of cluster mutations.

Let us recall the procedure of \emph{quiver amalgamation} of two quivers by a subset of frozen variables, following \cite{GS22} (see also \cite{SS19}). Let $Q_1$, $Q_2$ be two quivers, with certain subsets $I_1$, $I_2$ of frozen variables in $Q_1$, $Q_2$, respectively. Suppose there is a bijection $\phi:I_1\rightarrow I_2$. We can then amalgamate quivers $Q_1$ and $Q_2$ by subsets $I_1$ and $I_2$, along the map $\phi$. The resulting quiver $Q$ is obtained in the following two steps:

1) for any $i\in I_1$, identify vertices $v_i\in Q_1$ and $v_{\phi(i)}\in Q_2$ in the union $Q_1\sqcup Q_2$;

2) for any pair $i,j\in I_1$ with an arrow $v_i\rightarrow v_j$ in $Q_1$ of weight $\varepsilon_{ij}$ and an arrow $v_{\phi(i)}\rightarrow v_{\phi(j)}$ in $Q_2$ of weight $\varepsilon_{\phi(i),\phi(j)}$, the weight between corresponding vertices in $Q$ is assumed to be $\varepsilon_{ij}+\varepsilon_{\phi(i),\phi(j)}$.

Then there is an algebra embedding $\T_{Q}\rightarrow\T_{Q_1}\otimes\T_{Q_2}$ of the corresponding quantum torus algebras, given by
\begin{equation}\label{eq:aml}
    X_i\mapsto\begin{cases}
        X_i\otimes 1, & \text{for }i\in Q_1\backslash I_1,\\
        1\otimes X_i, & \text{for }i\in Q_2\backslash I_2,\\
        X_i\otimes X_{\phi(i)}, & \text{otherwise.}
    \end{cases}
\end{equation}

In simple words, the quiver amalgamation is nothing but gluing two quivers by identifying certain frozen variables. 

We define a similar procedure, called the \emph{self-amalgamation} of a quiver $Q$. The only difference with the previous definition is that we assume $Q_1=Q_2=Q$, and $I_1$, $I_2$ are two disjoint subsets of frozen vertices of $Q$. The frozen variables in $I_1$ (or equivalently, in $I_2$) are assumed to be unfrozen after the self-amalgamation.

\subsection{Quantum groups}\label{sec:3} 
Fix an integer $n\geq 1$. In what follows, we consider the Lie algebra $\mathfrak{g}=\mathfrak{sl}_{n+1}(\mathbb{C})$. Let $\mathcal{R}=\mathcal{R_+}\sqcup\mathcal{R_-}$ be the root system of $\mathfrak{g}$, with a partition into positive roots and negative roots. Let $\Pi=\{\alpha_i\mid 1\leq i\leq n\}\subset \mathcal{R}_+$ be the set of simple roots. Denote the root lattice by $Q=\oplus_{i=1}^n\mathbb{Z}\alpha_i$. For $\lambda,\mu\in Q$, we write $\lambda\preceq \mu$ if $\mu-\lambda$ is a positive linear combination of simple roots. Then $\preceq$ defines a partial order on the root lattice. The Weyl group $W=S_{n+1}$ of $\mathfrak{g}$ acts on the root lattice $Q$. The simple reflections are denoted by $s_i$, for $1\leq i\leq n$. We denote by $w_0$ the longest element in $W$, and write $r=n(n+1)/2$ to be the number of positive roots.

\begin{defi}\label{def:tU}
    The algebra $\tU$ is the unital $\Qq$-algebra generated by $E_i$, $F_i$, $K_i^{\pm1}$, $K_i'^{\pm1}$, for $1\leq i\leq n$, which subject to the following relations for $1\leq i,j\leq n$,
    \begin{align*}
        & K_iK_i^{-1}=K_i^{-1}K_i=K_i'K_i'^{-1}=K_i'^{-1}K_i'=1,\qquad [K_i,K_j]=[K_i',K_j']=0,\\
        & K_iE_j=q^{a_{ij}}E_jK_i,\qquad K_i'E_j=q^{-a_{ij}}E_jK_i',\\
        & K_iF_j=q^{-a_{ij}}F_jK_i,\qquad K_i'F_j=q^{a_{ij}}F_jK_i',\\
        & [E_i,F_j]=\delta_{ij}(q-q^{-1})(K_i'-K_i),\\
        & E_i^2E_j-(q+q^{-1})E_iE_jE_i+E_jE_i^2=0,\quad\text{if }|i-j|=1,\\
        & F_i^2F_j-(q+q^{-1})F_iF_jF_i+F_jF_i^2=0,\quad\text{if }|i-j|=1,\\
        &[E_i,E_j]=[F_i,F_j]=0,\quad\text{if }|i-j|>1.
    \end{align*}
Here $a_{ij}$ is the entry in the Cartan matrix, which equals $2$ if $i=j$, equals $-1$ if $|i-j|=1$, and equals $0$ if $|i-j|>1$.
\end{defi}

The algebra $\tU$ is called the Drinfeld double quantum group associated with the Lie algebra $\mathfrak{g}$. It is a Hopf algebra, with the coproduct given by
\begin{equation*}
\begin{array}{ll}
    \Delta(E_i)=E_i\otimes 1+K_i\otimes E_i,\qquad & \Delta(K_i)=K_i\otimes K_i, \\
    \Delta(F_i)=F_i\otimes K_i'+1\otimes F_i, \qquad & \Delta(K_i')=K_i'\otimes K_i',
\end{array}
\end{equation*}
for $1\leq i\leq n$.

The algebra $\tU$ is naturally a $Q$-graded algebra, where
\begin{equation}\label{eq:grad}
    \text{deg}(F_i)=\alpha_i,\;\text{deg}(E_i)=-\alpha_i,\;\text{deg}(K_i)=\text{deg}(K_i')=0,\; \text{for }1\leq i\leq n.
\end{equation}

Let $\tU^{>}$ (resp., $\tU^{<}$) be the unital subalgebra of $\tU$ generated by $E_i$ (resp., $F_i$), for $1\leq i\leq n$. Let $\tU^0$ be the unital subalgebra of $\tU$ generated by $K_i$ and $K_i'$, for $1\leq i\leq n$.  

One has the \emph{triangular decomposition} of $\tU$, which is the $\Qq$-vector space isomorphism
\begin{equation}\label{eq:tria}
    m: \tU^{<}\otimes \tU^{0}\otimes \tU^>\overset{\sim}{\longrightarrow}\tU,
\end{equation}
given by the multiplication. Here the tensor products are taken over $\Qq$.

The Drinfeld--Jimbo quantum group $\dU$ is the unital $\Qq$-algebra with generators $E_i$, $F_i$, $K_i^{\pm1}$, for $1\leq i\leq n$, and with the relations obtained by replacing $K_i'$ by $K_i^{-1}$ in  Definition \ref{def:tU}. 

There is a surjective algebra homomorphism 
\begin{equation}
    \pi:\tU\longrightarrow\dU,
\end{equation}
called the \emph{central reduction map} of $\tU$, given by
\[
E_i\mapsto E_i,\;F_i\mapsto F_i,\;K_i\mapsto K_i,\;K_i'\mapsto K_i^{-1},\quad\text{for }1\leq i\leq n.
\]

\begin{remark}\label{rmk:res}
Our generators for (universal) quantum groups differ by scalars with the commonly used ones. Let $\mathbf{E}_i$, $\mathbf{F}_i$, $\mathbf{K}_i$, $\mathbf{K}_i'$ ($1\leq i\leq n$) be the generators of the $\mathbb{Q}(q)$-algebra $\widetilde{\mathbf{U}}$ as in \cite{WZ22}*{\S 2.1}, associated with $\mathfrak{g}$. Then one has the $\Qq$-algebra isomorphism:
\[
\tU\overset{\sim}{\longrightarrow}\Qq\otimes_{\mathbb{Q}(q)}\widetilde{\mathbf{U}},
\]
given by for $1 \leq i\leq n$,
\begin{equation}\label{eq:resc0}
E_i\mapsto q^{1/2}(q^{-1}-q)\mathbf{E}_i,\;F_i\mapsto q^{-1/2}(q-q^{-1})\mathbf{F}_i,\;K_i\mapsto\mathbf{K}_i,\;K_i'\mapsto\mathbf{K}_i'.
\end{equation}
\end{remark}
\
\subsection{$\imath$Quantum groups}\label{sec:iqg}

The $\imath$quantum groups are certain coideal subalgebras of quantum groups.

\begin{defi}
    The algebra $\tUi$ is the unital $\Qq$-subalgebra of $\tU$,  generated by elements $B_i$, $k_i^{\pm1}$, for $1\leq i\leq n$, where
    \begin{equation*}
        B_i=F_i-q^{-1}E_iK_i',\quad k_i=K_iK_i'.
    \end{equation*}
    Following \cite{LW22}, The algebra $\tUi$ is called the \emph{universal $\imath$quantum group} of type $\mathrm{AI}_n$. 
\end{defi}

Set $\tUio$ to be the unital $\Qq$-subalgebra of $\tUi$, generated by $k_i^{\pm1}$, for $1\leq i\leq n$. Then $\tUio$ is a central subalgebra of $\tUi$.

The algebra $\tUi$ is a right coideal subalgebra of $\tU$, that is, the coproduct of $\tU$ restricts to
\begin{equation}\label{eq:cop}
    \Delta: \tUi\longrightarrow \tUi\otimes \tU.
\end{equation}

One has the following presentation of $\tUi$. 

\begin{prop}(cf. \cite{LW22a}*{Proposition 6.4})
    The algebra $\tUi$ is the unital $\Qq$-algebra generated by $B_i$, $k_i^{\pm1}$, for $1\leq i\leq n$, with relations
    \begin{align}
        & k_ik_i^{-1}=k_i^{-1}k_i=1, \qquad [k_i,k_j]=[k_i,B_j]=0,\label{eq:R0}\\
        &[B_i,B_j]=0,\quad\text{if }|i-j|>1, \label{eq:R1}\\
        &B_jB_i^2-(q+q^{-1})B_iB_jB_i+B_i^2B_j=(q-q^{-1})^2B_jk_i, \quad \text{if }|i-j|=1. \label{eq:R2}
    \end{align} 
\end{prop}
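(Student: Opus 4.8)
My plan is to use the standard two-sided strategy for a presentation of this kind: write $\mathbf{A}$ for the unital $\Qq$-algebra abstractly presented by the generators $B_i,k_i^{\pm1}$ and relations \eqref{eq:R0}--\eqref{eq:R2}, check that $B_i\mapsto F_i-q^{-1}E_iK_i'$, $k_i\mapsto K_iK_i'$ is a well-defined surjection $\pi\colon\mathbf{A}\twoheadrightarrow\tUi$, and then match dimensions. The first task is to verify \eqref{eq:R0}--\eqref{eq:R2} inside $\tU$ using only Definition \ref{def:tU}. Relations \eqref{eq:R0} are immediate: $K_iK_i'$ is a unit, the $K_iK_i'$ commute pairwise, and the weights $q^{-a_{ij}}$ on $F_j$ and $q^{a_{ij}}$ on $E_jK_j'$ cancel, so $k_i$ is central; relation \eqref{eq:R1} follows from $[E_i,E_j]=[F_i,F_j]=[E_i,F_j]=0$ and $K_i'E_j=E_jK_i'$ when $|i-j|>1$. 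The substantive point is \eqref{eq:R2}: I would put $S=B_jB_i^2-(q+q^{-1})B_iB_jB_i+B_i^2B_j$ with $|i-j|=1$, substitute $B=F-q^{-1}EK'$ everywhere, and sort the resulting monomials by the number of $E$-factors. The part with three $E$-factors equals $q^{-3}\bigl(E_jE_i^2-(q+q^{-1})E_iE_jE_i+E_i^2E_j\bigr)(K_i')^2K_j'$ and vanishes by the quantum Serre relation for the $E_i$; the part with no $E$-factor vanishes by the Serre relation for the $F_i$; and the one- and two-$E$ parts, after normal-ordering via $[E_i,F_j]=\delta_{ij}(q-q^{-1})(K_i'-K_i)$, should collapse to $(q-q^{-1})^2B_jK_iK_i'=(q-q^{-1})^2B_jk_i$. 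This produces the surjection $\pi$.

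Next I would bound $\mathbf{A}$ from above. By \eqref{eq:R0} the central elements $k_i^{\pm1}$ generate a Laurent polynomial ring, so $\mathbf{A}=\Qq[k_1^{\pm1},\dots,k_n^{\pm1}]\cdot\mathbf{A}^B$ with $\mathbf{A}^B$ the subalgebra generated by the $B_i$. Filtering $\mathbf{A}^B$ by $\deg B_i=1$ sends the right-hand side of \eqref{eq:R2} (which has degree $1<3$) to zero, so $\operatorname{gr}\mathbf{A}^B$ is a quotient of the algebra presented by the homogeneous quantum Serre relations in the $B_i$, i.e. a quotient of $\tU^{<}$ under $B_i\mapsto F_i$. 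Fixing a reduced word for $w_0$ and the associated root vectors, the PBW theorem for $\tU^{<}$ then forces $\mathbf{A}$ to be spanned by the set $\mathcal{S}$ of ordered monomials $\bigl(\prod_{\beta\in\mathcal{R}_+}B_\beta^{c_\beta}\bigr)k_1^{m_1}\cdots k_n^{m_n}$, where the $B_\beta$ are built from the $B_i$ by the same iterated $q$-commutators that build the PBW root vectors of $\tU^{<}$ from the $F_i$.

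For the matching lower bound I would show $\pi(\mathcal{S})$ is linearly independent in $\tU$. Equip $\tU$ with the algebra filtration given by $\deg F_i=1$ and $\deg E_i=\deg K_i^{\pm1}=\deg K_i'^{\pm1}=0$; this is compatible with Definition \ref{def:tU} (the only relation mixing the $E_i$ and the $F_j$, namely $[E_i,F_j]=\delta_{ij}(q-q^{-1})(K_i'-K_i)$, is filtered), and by the triangular decomposition \eqref{eq:tria} one identifies $\operatorname{gr}\tU$ with the algebra having the same presentation except that $[E_i,F_j]=0$ for all $i,j$; in particular $\operatorname{gr}\tU$ is a domain. The leading symbol of $B_i$ is $F_i$ and that of $k_i$ is $K_iK_i'$, hence (using that $q$-commutators of leading symbols are leading symbols of $q$-commutators) the leading symbol of $\pi$ applied to the element of $\mathcal{S}$ indexed by $(c_\beta,m_i)$ is $\bigl(\prod_\beta F_\beta^{c_\beta}\bigr)\prod_i(K_iK_i')^{m_i}$; these are part of a PBW-type basis of $\tU^{<}\otimes\Qq[(K_iK_i')^{\pm1}]\subseteq\operatorname{gr}\tU$, hence linearly independent, so $\pi(\mathcal{S})$ is too. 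Combining the two bounds, $\mathcal{S}$ is a basis of $\mathbf{A}$ and $\pi$ is an isomorphism. The only laborious step is the $E$-degree bookkeeping verifying \eqref{eq:R2}; the rest is a routine filtered/PBW argument built on \eqref{eq:tria}. Alternatively, the whole statement can be deduced from \cite{LW22a}*{Proposition 6.4} after rescaling the generators as in Remark \ref{rmk:res}.
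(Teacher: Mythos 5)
Your proposal is correct in outline, but it takes a genuinely different route from the paper: the paper offers no proof of this proposition at all, deferring entirely to \cite{LW22a}*{Proposition 6.4} together with the rescaling isomorphism of Remark \ref{rmk:rei} (note that the relevant rescaling for the generators $B_i,k_i$ is Remark \ref{rmk:rei}, not Remark \ref{rmk:res}, which concerns the generators of $\tU$; your closing sentence cites the wrong remark, and one still has to check that the constant $(q-q^{-1})^2$ in \eqref{eq:R2} is what the Lu--Wang relation becomes under $B_i\mapsto q^{-1/2}(q-q^{-1})\mathbf{B}_i$). Your self-contained argument --- verify \eqref{eq:R0}--\eqref{eq:R2} in $\tU$ by sorting monomials by $E$-degree to get a surjection from the abstractly presented algebra, then match a filtered/PBW upper bound against a leading-symbol lower bound via the triangular decomposition \eqref{eq:tria} --- is the standard and correct strategy, and it has the virtue of re-deriving structural facts (a $\tUio$-basis of $\tUi$ of PBW type) that the paper later imports from \cite{WZ22}*{Proposition 2.6} in proving Proposition \ref{prop:PBWb}; indeed your leading-symbol argument is close in spirit to the paper's Lemma \ref{le:exp}. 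Two points need tightening: the one- and two-$E$ contributions to \eqref{eq:R2} are where the actual content lies and are only asserted to ``collapse''; and the filtration should be placed on $\mathbf{A}$ itself (with $\deg B_i=1$, $\deg k_i^{\pm1}=0$) rather than on the subalgebra $\mathbf{A}^B$ generated by the $B_i$, since the relation \eqref{eq:R2} involves $k_i$ and so does not present $\mathbf{A}^B$ on its own. Neither issue is a genuine gap.
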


\begin{remark}\label{rmk:rei}
    Let $\mathbf{B}_i$, $\widetilde{\mathbf{k}}_i$ ($1\leq i\leq n$) be the generators of the $\mathbb{Q}(q)$-algebra $\widetilde{\mathbf{U}}^\imath$ in \cite{WZ22}*{\S 2.4} associated with the Satake diagram of type $\mathrm{AI}_n$. Then one has the $\Qq$-algebra isomorphism
$$\tUi\overset{\sim}{\longrightarrow}\Qq\otimes_{\mathbb{Q}(q)}\widetilde{\mathbf{U}}^\imath,$$
given by 
\begin{equation}\label{eq:res1}
B_i\mapsto q^{-1/2}(q-q^{-1})\mathbf{B}_i,\quad k_i\mapsto \widetilde{\mathbf{k}}_i,\quad \text{for }1\leq i\leq n.
\end{equation}
\end{remark}

The $\imath$quantum group $\dUi$ is defined to be the $\Qq$-subalgebra of $\dU$, generated by elements
\begin{equation}
    B_i=F_i+q^{-1} E_iK_i^{-1},\quad \text{for }1\leq i\leq n.
\end{equation}
With our rescaling, the algebra $\dUi$ is (the field extension of) the $\imath$quantum group of type $\mathrm{AI}_n$ as in \cite{WZ22}*{\S 2.5}, associated with the parameters $\varsigma_i=-1$, for $1\leq i\leq n$.

One may also consider $\imath$quantum groups associated with other parameters $\boldsymbol{\varsigma}=(\varsigma_i)_{i=1}^n\in\Qq^n$ (cf. \emph{loc.cit.}). Thanks to \cite{Wa21}*{Lemma 2.5.1}, $\imath$quantum groups associated with different parameters are isomorphic after field extensions.

There is an algebra surjection
\begin{equation}\label{eq:icen}
    \pi^\imath:\tUi\longrightarrow \dUi,
\end{equation}
which is called the \emph{central reduction} of $\tUi$, given by
\begin{equation}
    B_i\mapsto B_i,\quad k_i\mapsto -1,\quad \text{for }1\leq i\leq n.
\end{equation}
The kernel of $\pi^\imath$ is the two-sided ideal generated by $k_i+1$, for $1\leq i\leq n$. 

Note that $\pi^\imath$ is not the restriction of $\pi$. They only coincide after a twist on generators (see the proof of Proposition \ref{prop:inti}).

\subsection{Braid group symmetries}\label{sec:braid}

For $1\leq i\leq n$, there is a $\Qq$-algebra automorphism $\oT_i$ of $\tU$, given by
\begin{equation*}
    E_j\mapsto\begin{cases}
        qK_i'^{-1}F_i, &\text{if }i=j,\\
        \frac{q^{1/2}E_iE_j-q^{-1/2}E_jE_i}{q-q^{-1}}, &\text{if }|i-j|=1,\\
        E_j, &\text{if }|i-j|>1,
    \end{cases}
    \quad K_j\mapsto\begin{cases}
        K_i^{-1}, &\text{if }i=j,\\
        K_iK_j, &\text{if }|i-j|=1,\\
        K_j, &\text{if }|i-j|>1,
    \end{cases}
\end{equation*}
and
\begin{equation*}
    F_j\mapsto\begin{cases}
        q^{-1}E_iK_i^{-1}, &\text{if }i=j,\\
        \frac{q^{1/2}F_iF_j-q^{-1/2}F_jF_i}{q-q^{-1}}, &\text{if }|i-j|=1,\\
        F_j, &\text{if }|i-j|>1,
    \end{cases}
    \quad K_j'\mapsto\begin{cases}
        K_i'^{-1}, &\text{if }i=j,\\
        K_i'K_j', &\text{if }|i-j|=1,\\
        K_j', &\text{if }|i-j|>1.
    \end{cases}
\end{equation*}
The automorphisms $\{\sigma_i\mid 1\leq i\leq n\}$ satisfies the braid group relation, that is, $\sigma_i\sigma_j\sigma_i=\sigma_j\sigma_i\sigma_j$ for $|i-j|=1$, and $\sigma_i\sigma_j=\sigma_j\sigma_i$ for $|i-j|>1$.

We remark that the automorphism $\oT_i$ is the $\widetilde{T}_{i,+1}'$ in \cite{WZ22}*{\S 2.2} under the rescaling \eqref{eq:resc0}, and is the inverse of the braid group action associated with $i$ in \cite{Sh22}.

Each $\sigma_i$ preserves the ideal of $\tU$ generated by $K_jK_j'-1$, for $1\leq j\leq n$. Therefore it descends to an algebra automorphism of $\dU$.

Take a reduced expression $\mathbf{i}=(i_1,\cdots,i_r)$ of $w_0$. Given $\mathbf{a}=(a_1,\cdots,a_r)$ and $\mathbf{b}=(b_1,\cdots,b_r)$ in $\mathbb{N}^r$, set 
\begin{equation*}
\begin{split}
    &F_{\mathbf{i}}(\mathbf{a})=E_{i_1}^{a_1}\oT_{i_1}(E_{i_2}^{a_2})\cdots \oT_{i_1}\cdots\oT_{i_{r-1}}(E_{i_r}^{a_r}),\quad\text{and }\\
    &E_{\mathbf{i}}(\mathbf{b})=F_{i_1}^{b_1}\oT_{i_1}(F_{i_2}^{b_2})\cdots \oT_{i_1}\cdots\oT_{i_{r-1}}(F_{i_r}^{b_r}).
\end{split}
\end{equation*}
For $\mathbf{c}=(c_1,\cdots,c_n)$ and $\mathbf{d}=(d_1,\cdots,d_n)$ in $\mathbb{Z}^n$, we set
\begin{equation*}
    K(\mathbf{c})=K_1^{c_1}\cdots K_n^{c_n}\quad \text{and}\quad K'(\mathbf{d})=K'^{d_1}_1\cdots K'^{d_n}_n.
\end{equation*}
Then the set
\begin{equation}\label{eq:PBW0}
    \{F_{\mathbf{i}}(\mathbf{a})K(\mathbf{c})K'(\mathbf{d})E_{\mathbf{i}}(\mathbf{b})\mid \mathbf{a},\mathbf{b}\in\mathbb{N}^r,\mathbf{c},\mathbf{d}\in\mathbb{Z}^n\}
\end{equation}
gives a $\Qq$-basis of $\tU$, which is called the (rescaled) PBW-basis of $\tU$ associated with $\mathbf{i}$. 

Recall $\A=\mathbb{Z}[q^{1/2},q^{-1/2}]$. Let $\ZtU$ be the $\A$-submodule of $\tU$ spanned by PBW-basis associated with $\mathbf{i}$. By \cite{Sh22}*{Theorem 5.13}, the $\A$-submodule $\ZtU$ is moreover an $\A$-subalgebra, which is independent of the choice of $\mathbf{i}$.

Let $\ZdU=\pi(\ZtU)$ be the $\A$-subalgebra of $\dU$. It has an $\A$-linear basis
\begin{equation}
    \{F_{\mathbf{i}}(\mathbf{a})K(\mathbf{c})E_{\mathbf{i}}(\mathbf{b})\mid \mathbf{a},\mathbf{b}\in\mathbb{N}^r,\mathbf{c}\in\mathbb{Z}^n\}.
\end{equation}
This basis is referred to as the PBW-basis of $\dU$ associated with $\mathbf{i}$.

We next recall the (relative) braid group symmetries on $\imath$quantum groups following \cite{WZ22}. Let us mention that the braid group symmetries on quantum groups do not preserve the $\imath$quantum groups in general. Therefore the braid group symmetries on $\imath$quantum groups are not the restriction the braid group symmetries of quantum groups.

By \cite{WZ22}*{Theorem C} (with our rescaling), there is a $\Qq$-algebra automorphism $T_i$ of $\tUi$, for $1\leq i\leq n$, given by
\begin{equation}\label{braid}
    B_j\mapsto\begin{cases}
        -k_i^{-1}B_i, &\text{if }i=j,\\
        \frac{q^{1/2}B_iB_j-q^{-1/2}B_jB_i}{q-q^{-1}}, &\text{if }|i-j|=1,\\
        B_j, & \text{if }|i-j|>1, 
    \end{cases}
    \quad 
    k_j\mapsto\begin{cases}
        k_i^{-1} &\text{if }i=j,\\
        -k_ik_j &\text{if }|i-j|=1,\\
        k_j &\text{if }|i-j|>1.
    \end{cases}
\end{equation}
The automorphism $\bT_i$ is denoted by $\widetilde{\mathbf{T}}_{i,+1}'$ in \emph{loc.cit.}

Note that each $\bT_i$ preserves the ideal generated by $k_j+1$, for $1\leq j\leq n$. Therefore $\bT_i$ descends to an algebra automorphism of $\dUi$.

\subsection{The action of a Coexter element}\label{sec:ace}

One interesting feature of the braid group action in type AI is that, the action related to the Coxeter element $s_1s_2\cdots s_n$ acts via a cyclic symmetry.

Let us denote $c=s_1s_2\cdots s_n$ in $W$, and write $\bT_c=\bT_1\bT_2\cdots\bT_n$ to be the automorphism of $\tUi$.

\begin{prop}\label{prop:tci}(cf. \cite{Ch07}*{Lemma 6})
    As an automorphism on $\tUi$, we have 
    \begin{equation}\label{eq:cyc}
        \bT_c^{n+1}=id.
    \end{equation}
\end{prop}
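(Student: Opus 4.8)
The plan is to verify the identity $\bT_c^{n+1}=\mathrm{id}$ by computing the action of $\bT_c$ on a generating set of $\tUi$ and tracking it under iteration. Since $\tUi$ is generated by $B_1,\dots,B_n$ together with the central invertible elements $k_1,\dots,k_n$, and since $\bT_c$ is an algebra automorphism, it suffices to check that $\bT_c^{n+1}$ fixes each $B_i$ and each $k_i$. I would organise the computation around the combinatorics of the Coxeter element $c=s_1s_2\cdots s_n$: its action on the root lattice $Q$ has order $n+1$ (this is the Coxeter number of $\mathfrak{sl}_{n+1}$), and the $\imath$braid operators $\bT_i$ satisfy the braid relations by \cite{WZ22}, so $\bT_c$ is a well-defined automorphism whose ``underlying'' linear action on weights is the Coxeter transformation. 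The strategy is therefore to reduce the operator identity to a statement about how $\bT_c$ permutes (up to the central elements $k_i$ and scalars in $q^{1/2}$) a PBW-type family of elements indexed by positive roots.

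First I would set up the key bookkeeping device: following the PBW constructions recalled in \S\ref{sec:braid}, choose the reduced word $\mathbf{i}$ for $w_0$ adapted to $c$, namely the one obtained by concatenating $n+1$ ``blocks'' that are initial segments of $c$ (the standard reduced word $s_1(s_2s_1)(s_3s_2s_1)\cdots$ or its Coxeter-adapted variant), so that applying $\bT_c$ shifts each PBW root vector to the next one in this ordering. Concretely, one defines elements $B_{\beta}\in\tUi$ attached to positive roots $\beta$ via the $\imath$braid operators $\bT_i$, in the manner of the relative root vectors of \cite{WZ22}, and one shows $\bT_c$ maps $B_\beta$ to $B_{c(\beta)}$ up to an explicit monomial in the $k_j^{\pm1}$ and a power of $q^{1/2}$, for $\beta$ ranging over positive roots with $c(\beta)$ still positive, and ``wraps around'' (picking up a factor involving $k_i^{-1}$, exactly as in the $i=j$ case of \eqref{braid}) when $c(\beta)$ becomes negative. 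Iterating $n+1$ times, the root $\beta$ returns to itself since $c^{n+1}=\mathrm{id}$ on $Q$, so $\bT_c^{n+1}(B_i)$ equals $B_i$ times an accumulated central-and-scalar factor; the remaining task is to check this accumulated factor is $1$. For the torus part, $\bT_c$ acts on the lattice $\mathbb{Z}^n=\bigoplus\mathbb{Z}\alpha_i^\vee$ (spanned by the $k_i$) again through the Coxeter transformation up to signs coming from the $-k_ik_j$ in \eqref{braid}; tracking the sign cocycle over $n+1$ steps and using that $c$ has order $n+1$ gives $\bT_c^{n+1}(k_i)=k_i$.

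I expect the main obstacle to be precisely the verification that \emph{all} accumulated scalar and central correction factors cancel after $n+1$ iterations — both the powers of $q^{1/2}$ produced by the $q$-commutator formula $\frac{q^{1/2}B_iB_j-q^{-1/2}B_jB_i}{q-q^{-1}}$ in the $|i-j|=1$ case, and the products of $k_j^{\pm1}$ and signs $-1$ contributed at each ``wrap-around'' and at each neighbouring-index step. This is a finite but delicate computation; I would handle it either by an explicit induction on $n$ (the $n=1$ case $\bT_1^2=\mathrm{id}$ being a direct check from \eqref{braid}, since $B_1\mapsto -k_1^{-1}B_1\mapsto -k_1(-k_1^{-1}B_1)=B_1$ and $k_1\mapsto k_1^{-1}\mapsto k_1$), or, more conceptually, by exploiting the central reduction $\pi^\imath\colon\tUi\to\dUi$ together with a weight/grading argument: the correction factor lies in the central subalgebra $\tUio$ times $q^{\mathbb{Z}/2}$, its image in $\dUi$ can be computed by specialising $k_i\mapsto -1$, and a separate low-rank or rank-two reduction pins down the $q$-power. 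A clean alternative, if available, is to invoke Chekhov's original observation \cite{Ch07}*{Lemma 6} at the level of the associated Poisson/cluster picture and then lift via the quasi-cluster description of the $\bT_i$ established in Theorem \ref{thm:braid}; but since that theorem is proved later, for a self-contained argument here I would stick with the direct PBW bookkeeping outlined above.
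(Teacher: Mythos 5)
Your overall strategy --- check $\bT_c^{n+1}$ on the generators and exploit that the Coxeter element $c$ has order $n+1$ on the root lattice --- is the right starting point and agrees in spirit with the paper. But as written the proposal has a genuine gap: the entire content of the statement is the cancellation of the ``accumulated scalar and central correction factors,'' and you explicitly defer that verification to an unspecified ``finite but delicate computation.'' Neither of your fallback strategies closes it. The central-reduction idea cannot work on its own for the universal algebra $\tUi$: your putative correction factor lies in $\tUio\cdot q^{\mathbb{Z}/2}$, and $\pi^\imath$ kills exactly the difference between, say, $1$ and $-k_i$, so showing that the factor specialises to $1$ in $\dUi$ does not show it equals $1$ in $\tUi$. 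The induction-on-$n$ route is not sketched beyond the $n=1$ base case. Moreover, the intermediate claim that $\bT_c$ sends each PBW root vector $B_\beta$ to $B_{c(\beta)}$ up to a monomial in the $k_j^{\pm1}$ and a power of $q^{1/2}$ is itself unproved, and it turns out to be unnecessary.

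The paper's proof avoids all of this bookkeeping by tracking only the simple generators. Setting $B_{n+1}=\bT_c(B_n)$, it suffices to show that $\bT_c(B_i)=B_{i+1}$ cyclically in $i$ modulo $n+1$ (the $k_i$ then come for free from relation \eqref{eq:R2}, since $\tU$ has no zero divisors). For $1\leq i<n$ this is immediate from $c(\alpha_i)=\alpha_{i+1}$ together with \cite{WZ22}*{Theorem 7.13}, which gives $\bT_w(B_i)=B_j$ \emph{on the nose} --- with no correction factors --- whenever $w(\alpha_i)=\alpha_j$ in the appropriate reduced situation. The only wrap-around step, $\bT_c^2(B_n)=B_1$, is handled by the identity $s_1c^2s_n=s_2\cdots s_ns_1\cdots s_{n-1}$ (a reduced expression sending $\alpha_n$ to $\alpha_1$) plus the rank-one computation $\bT_1(-k_1^{-1}B_1)=B_1$. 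If you want to salvage your approach, the missing ingredient is precisely this theorem of Wang--Zhang: with it there are no accumulated factors to cancel, and the ``delicate computation'' you flag disappears.
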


\begin{proof}
    Set $B_{n+1}=\bT_c(B_n)$. The proposition will follow if we can show
    \begin{equation}\label{eq:Tcy}
        \bT_c(B_i)=B_{i+1},\quad \text{for }1\leq i\leq n+1,
    \end{equation}
    where the subscript is considered modular $n+1$.

    For $1\leq i<n$, it is easy to see that $c(\alpha_i)=\alpha_{i+1}$. Then by \cite{WZ22}*{Theorem 7.13}, we have $\bT_c(B_i)=B_{i+1}$.

    For $i=n$, the equation \eqref{eq:Tcy} follows from the definition. For $i=n+1$, we need to show $\bT_c^2(B_n)=B_1$. Notice that $s_1c^2s_{n}=s_2s_3\cdots s_ns_1s_2\cdots s_{n-1}$, where the right hand side is a reduced expression, and $s_1c^2s_{n}(\alpha_n)=\alpha_1$. Therefore by \emph{loc.cit.}, we have
    \begin{equation*}
        \bT_2\bT_3\cdots\bT_n\bT_1\bT_2\cdots\bT_{n-1}(B_n)=B_1.
    \end{equation*}
    So we get
    \begin{equation*}
        \bT_c^2(B_n)=\bT_1\cdot\bT_2\bT_3\cdots\bT_n\bT_1\bT_2\cdots\bT_{n-1}(-k_n^{-1}B_n)=\bT_1(-k_1^{-1}B_1)=B_1,
    \end{equation*}
    which completes the proof.
\end{proof}


\subsection{Quasi $K$-matrices}\label{sec:K}
By \cite{WZ22}, braid group symmetries on quantum groups and relative braid group symmetries on $\imath$quantum groups are related by the rank-one \emph{quasi K-matrices}. Let us recall their results in our setting.

Recall the grading of $\tU$ from \eqref{eq:grad}. Let $Q_-=\sum_{i=1}^n\mathbb{Z}_{\leq 0}\alpha_i$ be the submonoid of the root lattice $Q$, consisting of non-positive combinations of simple roots. For $\mu\in Q_-$, let $\widetilde{\mathrm{U}}_{n,\mu}^{>0}$ be the subspace of $\tU^{>0}$ consisting of homogeneous elements of degree $\mu$. Let $\widehat{\mathrm{U}}^{>0}_n$ be the completion of the algebra $\tU^{>0}$, which consists of formal sums $f=\sum_{\mu\in Q_-}f_\mu$ where $f_\mu\in \widetilde{\mathrm{U}}^{>0}_{n,\mu}$. 


For each $1\leq i\leq n$, there is a unique element $\tK_i=\sum_{n\geq 0}a_{2n}E_i^{2n}$ in $\widehat{\mathrm{U}}^{>0}_n$, such that, $a_{2n}\in\Qq$, $a_0=1$ and the equality
\begin{equation}\label{eq:it}
    B_i\tK_i=\tK_i(F_i-qE_iK_i)
\end{equation}
holds in a completion of $\tU$. 


By \cite{WZ22}*{Theorem B}, one has 
\begin{equation}\label{eq:intw}
    \oT_i^{-1}(x)\tK_i=\tK_i\bT^{-1}_i(x),\quad \text{for }x\in \tUi,
\end{equation}
in a certain completion algebra.

\subsection{Cluster realisations of quantum groups}\label{sec:clq}

In this subsection, we recall the cluster realisation of the algebra $\tU$ in \cite{SS19}. 

We start by describing the construction of the quiver $\widetilde{\Sigma_n}$, associated with the $n$-triangulation of a triangle. Take an equilateral triangle $ABC$. For each edge of $ABC$, we draw $n$ equidistant lines inside the triangle which are parallel to that edge. In this way we get $(n+1)^2$ small triangles inside $ABC$. The resulting picture is called an \emph{$n$-triangulation} of the triangle $ABC$. To obtain a quiver from this picture, we put a vertex on each vertex of small triangles, except for vertices $A$, $B$ and $C$. The vertices on edges $AB$, $BC$ and $CA$ are designed to be frozen. For vertices lying on the same side of the triangle $ABC$ we put dashed arrows in the direction $\overrightarrow{AB}$, $\overrightarrow{BC}$ and $\overrightarrow{CA}$. We put an arrow for each edge of small triangles which does not lie on the edge of $ABC$, and the direction is chosen in such a way that inside each small triangle we get a loop. The resulting quiver is denoted by $\widetilde{\Sigma_n}$. 

Let $\qD_n$ be the quiver obtained by amalgamating two $\widetilde{\Sigma_n}$-quivers by identifying corresponding frozen vertices on two sides of each triangle. 

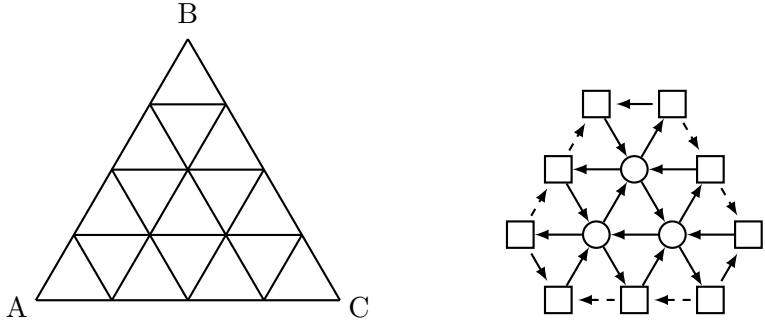
\begin{figure}[h]
\centering
\begin{tikzpicture}[thick, y=0.866cm, x=0.5cm]

\node at (-0.5,-0.1) {A};
\node at (4,4.4) {B};
\node at (8.5,-0.1) {C};

\draw (0,0) to (8,0);
\draw (1,1) to (7,1);
\draw (2,2) to (6,2);
\draw (3,3) to (5,3);

\draw (0,0) to (4,4);
\draw (2,0) to (5,3);
\draw (4,0) to (6,2);
\draw (6,0) to (7,1);

\draw (4,4) to (8,0);
\draw (3,3) to (6,0);
\draw (2,2) to (4,0);
\draw (1,1) to (2,0);

\end{tikzpicture}
\qquad\qquad
\begin{tikzpicture}[every node/.style={inner sep=0, minimum size=0.35cm, thick}, thick, y=0.866cm, x=0.5cm]
\begin{scope}[>=latex]
\node at (2,-0.2) {};

\node (1) at (2,0) [draw] {};
\node (2) at (4,0) [draw] {};
\node (3) at (6,0) [draw] {};
\node (4) at (1,1) [draw] {};
\node (5) at (3,1) [circle, draw] {};
\node (6) at (5,1) [circle, draw] {};
\node (7) at (7,1) [draw] {};
\node (8) at (2,2) [draw] {};
\node (9) at (4,2) [circle, draw] {};
\node (10) at (6,2) [draw] {};
\node (11) at (3,3) [draw] {};
\node (12) at (5,3) [draw] {};

\draw[->, dashed, shorten >=2pt,shorten <=2pt] (2) to (1);
\draw[->, dashed, shorten >=2pt,shorten <=2pt] (3) to (2);
\draw[->, shorten >=2pt,shorten <=2pt] (3) to (7);
\draw[->, dashed, shorten >=2pt,shorten <=2pt] (10) to (7);
\draw[->, dashed, shorten >=2pt,shorten <=2pt] (12) to (10);
\draw[->, shorten >=2pt,shorten <=2pt] (12) to (11);
\draw[->, dashed, shorten >=2pt,shorten <=2pt] (8) to (11);
\draw[->, dashed, shorten >=2pt,shorten <=2pt] (4) to (8);
\draw[->, shorten >=2pt,shorten <=2pt] (4) to (1);

\draw[->] (9) to (12);
\draw[->] (5) to (9);
\draw[->] (1) to (5);
\draw[->] (6) to (10);
\draw[->] (2) to (6);

\draw[->] (6) to (3);
\draw[->] (9) to (6);
\draw[->] (11) to (9);
\draw[->] (5) to (2);
\draw[->] (8) to (5);

\draw[->] (7) to (6);
\draw[->] (6) to (5);
\draw[->] (5) to (4);
\draw[->] (10) to (9);
\draw[->] (9) to (8);
\end{scope}
\end{tikzpicture}
\caption{3-triangulation of the triangle ABC and the $\widetilde{\Sigma_3}$-quiver}

\end{figure}

\begin{figure}[h]
\centering
\begin{tikzpicture}[every node/.style={inner sep=0, minimum size=0.35cm, thick}, thick, x=0.75cm, y=0.5cm]

\node at (-3.7,-2) {$V_3$};
\node at (-3.7, 0) {$V_2$};
\node at (-3.7, 2) {$V_1$};
\node at (3.7, -2) {$\Lambda_1$};
\node at (3.7, 0) {$\Lambda_2$};
\node at (3.7, 2) {$\Lambda_3$};

\node (1) at (-3,-2) [draw] {};
\node (2) at (0,-5) [draw,circle] {};
\node (3) at (3,-2) [draw] {};
\node (4) at (-3,0) [draw] {};
\node (5) at (-2,-1) [draw,circle] {};
\node (6) at (0,-3) [draw,circle] {};
\node (7) at (2,-1) [draw,circle] {};
\node (8) at (3,0) [draw] {};
\node (9) at (-3,2) [draw] {};
\node (10) at (-2,1) [draw,circle] {};
\node (11) at (-1,0) [draw,circle] {};
\node (12) at (0,-1) [draw,circle] {};
\node (13) at (1,0) [draw,circle] {};
\node (14) at (2,1) [draw,circle] {};
\node (15) at (3,2) [draw] {};
\node (16) at (0,1) [draw,circle] {};
\node (17) at (0,3) [draw,circle] {};
\node (18) at (0,5) [draw,circle] {};
\begin{scope}[>=latex]
\draw [->] (1) to (2);
\draw [->] (2) to (3);

\draw [->] (4) to (5);
\draw [->] (5) to (6);
\draw [->] (6) to (7);
\draw [->] (7) to (8);

\draw [->] (9) to (10);
\draw [->] (10) to (11);
\draw [->] (11) to (12);
\draw [->] (12) to (13);
\draw [->] (13) to (14);
\draw [->] (14) to (15);

\draw [->] (15) to (18);
\draw [->] (18) to (9);

\draw [->] (8) to (14);
\draw [->] (14) to (17);
\draw [->] (17) to (10);
\draw [->] (10) to (4);

\draw [->] (3) to (7);
\draw [->] (7) to (13);
\draw [->] (13) to (16);
\draw [->] (16) to (11);
\draw [->] (11) to (5);
\draw [->] (5) to (1);

\draw [->] (6) to (11);
\draw [->] (11) to (17);
\draw [->] (17) to (13);
\draw [->] (13) to (6);

\draw [->] (2) to (5);
\draw [->] (5) to (10);
\draw [->] (10) to (18);
\draw [->] (18) to (14);
\draw [->] (14) to (7);
\draw [->] (7) to (2);

\draw [->, dashed] (1) to (4);
\draw [->, dashed] (4) to (9);
\draw [->, dashed] (8) to (3);
\draw [->, dashed] (15) to (8);
\end{scope}
\end{tikzpicture}
\caption{$\qD_3$-quiver.}
\label{fig-A3}
\end{figure}

We introduce a labelling of vertices of $\qD_n$, following \cite{SS19}. The frozen vertices on the left column are labelled by $V_{i,-i}$ with $1\leq i\leq n$, counting from top to bottom. Starting from a frozen vertex $V_{i,-i}$, we follow the arrows of the quiver in the South-East direction until we hit a vertex on the central column. We label the vertices along the path by $V_{i,r}$ with $r=-i,\cdots,0$. Then we start from the central vertex $V_{i,0}$ and follow the arrows in the North-East direction until we hit a frozen vertex on the right column. Label the vertices along the path by $V_{i,r}$ with $r=0,\cdots,i$. Next we rotate the quiver by $180^\circ$ and label the image of the vertex $V_{i,r}$ by $\Lambda_{i,r}$. In this way every vertices off the central column get labelled twice and those on the central column get labelled once. We write $V_i=V_{i,-i}$ and $\Lambda_{i}=\Lambda_{i,-i}$, $1\leq i\leq n$, to denote frozen vertices on the left column and right column, respectively.

Recall $\mathbb{F}=\mathbb{Q}(q^{1/2})$ and $\A=\mathbb{Z}[q^{1/2},q^{-1/2}]$. Let us write $_{\mathbb{F}}\mathcal{O}_q(\mathcal{X}_{|\qD_n|})=\mathbb{F}\otimes_{\A}\mathcal{O}_q(\mathcal{X}_{|\qD_n|})$ to be the base change of the quantum cluster algebra associated with the quiver $\qD_n$. Recall the notation $\ren{M}$ for the renormalized monomial in Section \ref{sec:defcl}. The following theorem is the cluster realisation of the quantum groups $\tU$.

\begin{theorem}\label{thm:tqg}(cf. \cites{SS19,Sh22}) 
    There is a $\Qq$-algebra embedding $t:\tU\hookrightarrow {_{\mathbb{F}}\mathcal{O}_q(\mathcal{X}_{|\qD_n|})}$, defined by 
    \begin{align*}
        &E_i\mapsto\sum_{r=-i}^{i-1}\ren{V_{i,-i}V_{i,-i+1}\cdots V_{i,r}},\\
        &K_i\mapsto \ren{V_{i,-i}V_{i,-i+1}\cdots V_{i,i}},\\
        &F_{n+1-i}\mapsto \sum_{r=-i}^{i-1}\ren{\Lambda_{i,-i}\Lambda_{i,-i+1}\cdots\Lambda_{i,r}},\\
        &K'_{n+1-i}\mapsto \ren{\Lambda_{i,-i}\Lambda_{i,-i+1}\cdots\Lambda_{i,i}},
    \end{align*}
    for $1\leq i\leq n$. Moreover under the embedding $t$, one has 
    \[
    \ZtU=\tU\cap \mathcal{O}_q(\mathcal{X}_{|\qD_n|}).
    \]
    
\end{theorem}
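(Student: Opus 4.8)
The plan is to verify the embedding $t$ on generators and then establish the integrality statement by a two-sided containment argument, following the strategy of Schrader--Shapiro \cite{SS19} for the algebra structure and of Shen \cite{Sh22} for the integral refinement. First I would check that the proposed images of $E_i,K_i,F_{n+1-i},K'_{n+1-i}$ satisfy the defining relations of $\tU$ in Definition~\ref{def:tU}. The key computational input is that, within a single Fock--Goncharov triangle $\widetilde{\Sigma_n}$, the renormalized monomials $\ren{V_{i,-i}\cdots V_{i,r}}$ behave like the cluster realisation of $\mathrm{U}^{>0}(\mathfrak{sl}_{n+1})$ and those built from the $\Lambda$-labels like $\mathrm{U}^{<0}$; the amalgamation $\qD_n$ of two triangles, glued along frozen vertices, produces exactly the cross-relations $[E_i,F_j]=\delta_{ij}(q-q^{-1})(K_i'-K_i)$ via \eqref{eq:aml}. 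Injectivity follows because the triangular decomposition \eqref{eq:tria} maps to a direct sum decomposition of the image that is respected by the $Q$-grading, together with the fact that the $E$-part and $F$-part involve disjoint sets of non-frozen cluster variables.

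For the integrality statement $\ZtU=\tU\cap\mathcal{O}_q(\mathcal{X}_{|\qD_n|})$, I would argue by two inclusions. The inclusion $\ZtU\subseteq\tU\cap\mathcal{O}_q(\mathcal{X}_{|\qD_n|})$ requires showing that the image under $t$ of each element of the rescaled PBW-basis \eqref{eq:PBW0} is a universally Laurent element over $\A$. Since $\ZtU$ is generated as an $\A$-algebra by the $E_i,F_i,K_i^{\pm 1},K_i'^{\pm 1}$ (by \cite{Sh22}*{Theorem 5.13}), it suffices to observe that $t(E_i),t(K_i^{\pm1}),t(F_i),t(K_i'^{\pm 1})$ lie in $\mathcal{O}_q(\mathcal{X}_{|\qD_n|})$: each is a sum of renormalized Laurent monomials with coefficients in $q^{\mathbb{Z}/2}$, hence in $\AT_{\qD_n}$, and the quantum cluster algebra is closed under the ambient multiplication, so the $\A$-subalgebra they generate lands in $\mathcal{O}_q(\mathcal{X}_{|\qD_n|})$. (One must be slightly careful that renormalization introduces only half-integer powers of $q$ that cancel in the relevant products; this is where the precise form of $\mathbb{F}\otimes_\A\mathcal{O}_q$ versus $\mathcal{O}_q$ matters, and one checks the monomials $V_{i,-i}\cdots V_{i,r}$ are already $*$-invariant up to an integral power of $q$.) The reverse inclusion $\tU\cap\mathcal{O}_q(\mathcal{X}_{|\qD_n|})\subseteq\ZtU$ is the substantive half: given $x\in\tU$ whose image is a universally Laurent polynomial over $\A$, expand $x=\sum c_{\mathbf a,\mathbf b,\mathbf c,\mathbf d}\,F_{\mathbf i}(\mathbf a)K(\mathbf c)K'(\mathbf d)E_{\mathbf i}(\mathbf b)$ in the rescaled PBW-basis over $\mathbb{F}$, and show all $c_{\mathbf a,\mathbf b,\mathbf c,\mathbf d}\in\A$. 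I would do this by choosing, for each PBW-monomial, a sequence of mutations (adapted to the reduced word $\mathbf i$, as in Shen's proof) in which that monomial becomes a single cluster variable monomial, so that its coefficient is forced to be integral by the Laurent property in that cluster; a triangularity/leading-term argument on the $Q$-grading and on the mutation-adapted dominance order lets one peel off the monomials one at a time.

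The main obstacle I expect is precisely this last step: controlling the image of the rescaled PBW-basis under iterated mutations well enough to run the leading-term extraction. This is nontrivial because the braid-group-twisted root vectors $\oT_{i_1}\cdots\oT_{i_{k-1}}(E_{i_k})$ do not have a transparent expression as renormalized monomials in a fixed cluster; one needs the quasi-cluster nature of the $\oT_i$ (as in Goncharov--Shen \cite{GS22} and \cite{Sh22}) to track how each root vector transforms, and then a careful bookkeeping of the $q^{1/2}$-powers produced by renormalization to certify that the surviving coefficients lie in $\A$ and not merely in $\mathbb{F}$. Since the theorem is quoted from \cite{SS19} and \cite{Sh22}, I would in practice cite those proofs for the bulk of this analysis and only spell out the adaptation needed for the double-triangle quiver $\qD_n$ and the rescaled (rather than Lusztig) integral form.
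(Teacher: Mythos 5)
The paper gives no proof of this theorem: it is recalled from \cite{SS19} and \cite{Sh22} with a citation only, so there is no argument in the text to compare yours against, and your decision to defer the bulk of the analysis to those references is exactly what the paper itself does. Your sketch of the strategy (checking relations on generators, the two-sided containment for $\ZtU=\tU\cap\mathcal{O}_q(\mathcal{X}_{|\qD_n|})$, and the mutation-adapted leading-term extraction for the hard inclusion) is a fair summary of what happens in those works. One factual slip worth flagging: your injectivity argument asserts that the $E$-part and $F$-part ``involve disjoint sets of non-frozen cluster variables,'' but in $\qD_n$ every vertex off the central column carries both a $V$-label and a $\Lambda$-label (the $\Lambda$-labelling is the $180^\circ$ rotation of the $V$-labelling), so the monomials for $t(E_i)$ and $t(F_j)$ share unfrozen variables; injectivity instead rests on a leading-term/triangularity argument with respect to the $Q$-grading, as in \cite{SS19}. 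Similarly, the claim that $\ZtU$ is generated as an $\A$-algebra by the Chevalley generators is not literally what \cite{Sh22}*{Theorem 5.13} states (that theorem says the $\A$-span of the rescaled PBW-basis is a subalgebra independent of the reduced word), so the easy inclusion should be run on PBW-monomials directly rather than on generators.
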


\section{Bases and integral forms}\label{sec:4}

In this section, we construct (rescaled) PBW-bases for $\imath$quantum groups, and study the integral forms spanned by such bases. We follow the notations in the previous section.

\subsection{PBW-bases}\label{sec:pbw}

Recall the braid group symmetries $\bT_i$ in \eqref{braid}. For a reduced expression $\mathbf{i}=(i_1,\cdots,i_r)$ of the longest element $w_0$ in the Weyl group $S_{n+1}$, define
\begin{equation}\label{eq:PBW}
    B_{\mathbf{i}}(\mathbf{a})=B_{i_1}^{a_1}\bT_{i_1}(B_{i_2}^{a_2})\cdots \bT_{i_1}\cdots \bT_{i_{r-1}}(B_{i_r}^{a_r}),\quad \text{and}\quad k(\mathbf{b})=k_1^{b_1}k_2^{b_2}\cdots k_n^{b_n},
\end{equation}
in $\dUi$, for $\mathbf{a}=(a_1,\dots,a_r)\in \mathbb{N}^r$ and $\mathbf{b}=(b_1,\dots,b_r)\in\mathbb{Z}^n$.

Recall the grading of the algebra $\tU$ in \eqref{eq:grad}.

\begin{lemma}\label{le:exp}
    We have
    \begin{equation}\label{eq:exp}
        B_{\mathbf{i}}(\mathbf{a})=F_{\mathbf{i}}(\mathbf{a})+\text{lower terms},\quad \text{for }\mathbf{a}\in\mathbb{N}^r.
    \end{equation}
    Here lower terms stand for $\Qq$-linear combinations of elements in $\tU$ with degrees strictly smaller than the degree of $F_{\mathbf{i}}(\mathbf{a})$. 
\end{lemma}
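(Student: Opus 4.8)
\textbf{Proof plan for Lemma \ref{le:exp}.}

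The plan is to prove the statement by induction on $r$, exploiting the fact that each braid symmetry $\bT_i$ of $\tUi$ agrees with the quantum-group braid symmetry $\oT_i$ up to strictly lower-degree corrections, combined with the definition $B_i = F_i - q^{-1}E_iK_i'$. First I would record the base fact that $B_i = F_i + (\text{lower terms})$: the element $-q^{-1}E_iK_i'$ has degree $-\alpha_i \prec \alpha_i$ in the $Q$-grading of \eqref{eq:grad}, so $B_i^{a} = F_i^{a} + (\text{lower terms})$ for any $a \in \mathbb{N}$, where throughout ``lower terms'' means a $\Qq$-combination of homogeneous elements of degree $\prec$ the leading degree. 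This handles $r=1$ and more generally the first factor $B_{i_1}^{a_1}$.

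The inductive step requires understanding how $\bT_{i_1}\cdots\bT_{i_{k-1}}$ applied to $B_{i_k}^{a_k}$ compares with $\oT_{i_1}\cdots\oT_{i_{k-1}}$ applied to $F_{i_k}^{a_k}$. The key structural input is the intertwining relation \eqref{eq:intw}, $\oT_i^{-1}(x)\tK_i = \tK_i\bT_i^{-1}(x)$ for $x \in \tUi$, together with the form $\tK_i = 1 + \sum_{n\geq 1} a_{2n}E_i^{2n}$ of the quasi $K$-matrix: conjugation by $\tK_i$ changes an element only by adding terms of strictly lower degree (each $E_i^{2n}$ with $n \geq 1$ strictly decreases the degree), since $\tK_i$ has leading term $1$ and all other terms have negative degree. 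Hence for $x \in \tUi$ one gets $\bT_i(x) = \oT_i(x) + (\text{lower terms})$, and more precisely $\bT_i$ preserves the leading graded piece of any element of $\tUi$ while modifying lower pieces. Iterating, $\bT_{i_1}\cdots\bT_{i_{k-1}}(B_{i_k}^{a_k})$ has the same leading term as $\oT_{i_1}\cdots\oT_{i_{k-1}}(F_{i_k}^{a_k})$, which is the leading term of the corresponding factor in $F_{\mathbf{i}}(\mathbf{a})$ by the definition preceding \eqref{eq:PBW0}. One must be slightly careful that each $\oT_{i_1}\cdots\oT_{i_{k-1}}(E_{i_k})$ appearing in the quantum-group PBW construction is a root vector of a definite degree, namely $-\beta_k$ where $\beta_k = s_{i_1}\cdots s_{i_{k-1}}(\alpha_{i_k})$, so the degrees of the factors are well-controlled and multiply up correctly; I would cite the standard PBW theory (e.g. as used around \eqref{eq:PBW0}) for this.

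Finally I would assemble: writing each factor as leading term plus lower terms and multiplying out, the unique top-degree contribution to $B_{\mathbf{i}}(\mathbf{a})$ is the product of the top-degree contributions, which is exactly $F_{\mathbf{i}}(\mathbf{a})$; every cross term involves at least one strictly-lower factor and, because multiplication in the $Q$-graded algebra $\tU$ adds degrees, lands in strictly lower degree than $\deg F_{\mathbf{i}}(\mathbf{a}) = \sum_k a_k \beta_k$. The main obstacle I anticipate is making the phrase ``$\bT_i(x) = \oT_i(x) + \text{lower terms for }x\in\tUi$'' rigorous: one needs to know that the correction coming from $\tK_i$-conjugation in \eqref{eq:intw} genuinely lies in $\tUi$ (not merely in a completion of $\tU$) and is a finite sum of strictly-lower-degree terms when applied to the specific elements $B_{i_k}^{a_k}$; this should follow because $B_{\mathbf{i}}(\mathbf{a})$ is manifestly an honest element of $\tUi$ and the grading argument only uses its image in $\tU$, but the bookkeeping between the completion in which \eqref{eq:intw} lives and the honest algebra $\tUi$ needs to be spelled out carefully, presumably by inducting on $r$ and tracking that all intermediate expressions remain finite sums.
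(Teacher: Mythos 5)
Your central claim --- that the intertwining relation \eqref{eq:intw} forces ``$\bT_i(x)=\oT_i(x)+\text{lower terms}$'' and hence that $\bT_i$ preserves leading graded pieces --- has a genuine gap. From \eqref{eq:intw} one does get $\bT_i^{-1}(x)=\tK_i^{-1}\,\oT_i^{-1}(x)\,\tK_i$, and conjugation by $\tK_i=1+\sum_{m\geq1}a_{2m}E_i^{2m}$ perturbs each homogeneous component of $\oT_i^{-1}(x)$ only by terms of strictly smaller degree. But the other half of your step fails: $\oT_i$ sends a homogeneous element of degree $\mu$ to one of degree $s_i(\mu)$, and $s_i$ does \emph{not} preserve the dominance order $\preceq$, so a component sitting strictly below the leading one before applying $\oT_i$ need not stay below it afterwards. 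Concretely, $\bT_i(B_i)=-k_i^{-1}B_i$ has leading component $-(K_iK_i')^{-1}F_i$ of degree $\alpha_i$, whereas $\oT_i$ applied to the leading component $F_i$ of $B_i$ gives $q^{-1}E_iK_i^{-1}$ of degree $-\alpha_i$: the leading term of the image comes from the \emph{lower} term of the input. The same phenomenon occurs inside the PBW iteration itself: for $\mathbf{i}=(1,2,1)$ in rank $2$, the element $\bT_2(B_1)$ has components in degrees $\pm\alpha_1\pm\alpha_2$, and $s_1(-\alpha_1+\alpha_2)=2\alpha_1+\alpha_2\succ\alpha_2=s_1(\alpha_1+\alpha_2)$, so after applying $\oT_1$ a ``lower'' component would naively dominate; that $\bT_1\bT_2(B_1)=B_2$ nevertheless has the correct leading term $F_2$ is exactly what needs proof and is not delivered by your argument. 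Controlling which degrees actually occur in the correction terms (so that they remain dominated after applying the remaining $\oT_{i_j}$'s) is the real content of the lemma, and your proposal assumes it.

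The paper's proof takes a different, more hands-on route that avoids this issue: it reduces to $\mathbf{a}=\mathbf{e}_k$ and inducts on $k$. When $|i_{k-1}-i_k|=1$ and $s_{i_1}\cdots s_{i_{k-2}}(\alpha_{i_k})$ is positive, $B_{\mathbf{i}}(\mathbf{e}_k)$ is rewritten as a $q$-commutator of two elements already known by induction to be of the form $F+\text{lower}$, and since multiplication adds degrees the $q$-commutator of the leading terms dominates; when that root is negative --- precisely where your leading-term claim breaks --- it invokes $\bT_{i_k}\bT_{i_{k-1}}(B_{i_k})=B_{i_{k-1}}$ and $\oT_{i_k}\oT_{i_{k-1}}(F_{i_k})=F_{i_{k-1}}$ to shorten the word and re-enter the induction. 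If you want to rescue the quasi-$K$-matrix approach you would have to show that all correction terms lie in degrees $\beta_k-\gamma$ with $\gamma$ kept positive by the relevant Weyl group elements, which is not obviously easier. (A minor side point: you describe the factors of $F_{\mathbf{i}}(\mathbf{a})$ as root vectors of degree $-\beta_k$ built from the $E_{i_k}$ yet use $\deg F_{\mathbf{i}}(\mathbf{a})=\sum_k a_k\beta_k$; for consistency with $B_i=F_i+\text{lower}$ the factors must be $\oT_{i_1}\cdots\oT_{i_{k-1}}(F_{i_k})$ of degree $+\beta_k$ --- the displayed definition before \eqref{eq:PBW0} appears to have $E$ and $F$ swapped.)
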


\begin{proof}
    For $1\leq k\leq r$, let $\mathbf{e}_k=(\delta_{ik})_{i=1}^r\in\mathbf{N}^r$. It  suffices to prove the lemma for $\mathbf{a}=\mathbf{e}_k$, for various $k$. We prove this by induction on $k$. Let $\mathbf{i}=(i_1,\cdots,i_r)$.

    The case when $k=1$ or $2$ follows directly from the definitions. Suppose $k>2$ and the lemma is proved for smaller $k$. 
    
    If $|i_{k-1}-i_k|>1$, then 
    \begin{equation*}
        B_{\mathbf{i}}(e_k)=\bT_{i_1}\cdots \bT_{i_{k-1}}(B_{i_k})=\bT_{i_1}\cdots \bT_{i_{k-2}}(B_{i_k})=B_{\mathbf{i'}}(\mathbf{e}_{k-1}),
    \end{equation*}
    where $\mathbf{i'}$ is obtained from $\mathbf{i}$ by swapping $(k-1)$-th and $k$-th terms. Similarly one has $F_{\mathbf{i}}(\mathbf{e}_k)=F_{\mathbf{i'}}(\mathbf{e}_{k-1})$. Therefore \eqref{eq:exp} holds by induction hypothesis.

    Now assume $|i_{k-1}-i_k|=1$. We separate the proof into two cases regarding whether the root $\beta=s_{i_1}\cdots s_{i_{k-2}}(\alpha_{i_k})$ is positive or negative.

    If $\beta$ is a positive root, we can take a reduced expression $\mathbf{i''}$ of $w_0$, such that the first $k-1$ factors are $i_1,\cdots ,i_{k-2},i_k$. Then we have
    \begin{equation*}
    \begin{split}
        B_{\mathbf{i}}(\mathbf{e}_k)=&\bT_{i_1}\cdots\bT_{i_{k-1}}(B_{i_k})\\
        =&\bT_{i_1}\cdots \bT_{i_{k-2}}\big(\frac{q^{1/2}B_{i_{k-1}}B_{i_k}-q^{-1/2}B_{i_k}B_{i_{k-1}}}{q-q^{-1}}\big)\\
        =&\frac{q^{1/2}B_{\mathbf{i}}(\mathbf{e}_{k-1})B_{\mathbf{i''}}(\mathbf{e}_{k-1})-q^{-1/2}B_{\mathbf{i''}}(\mathbf{e}_{k-1})B_{\mathbf{i}}(\mathbf{e}_{k-1})}{q-q^{-1}}\\
        \stackrel{(\heartsuit)}{=}&\frac{q^{1/2}F_{\mathbf{i}}(\mathbf{e}_{k-1})F_{\mathbf{i''}}(\mathbf{e}_{k-1})-q^{-1/2}F_{\mathbf{i''}}(\mathbf{e}_{k-1})F_{\mathbf{i}}(\mathbf{e}_{k-1})}{q-q^{-1}}
        +\text{lower terms}\\
        =&F_{\mathbf{i}}(\mathbf{e}_k)+\text{lower terms}.
    \end{split}
    \end{equation*}
    Here $(\heartsuit)$ follows from the induction hypothesis. 

    If $\beta$ is a negative root, then by possibly applying some braid group relations, we may assume $i_{k-2}=i_k$. By \cite{WZ22}*{Theorem 7.13} and \cite{Jan96}*{Lemma 8.20}, we have
    \begin{equation*}
        \bT_{i_k}\bT_{i_{k-1}}(B_{i_k})=B_{i_{k-1}}\quad \text{and}\quad \oT_{i_k}\oT_{i_{k-1}}(F_{i_k})=F_{i_{k-1}}.
    \end{equation*}
    Hence we have
    \begin{equation*}
    \begin{split}
        B_{\mathbf{i}}(\mathbf{e}_k)=&\bT_{i_1}\cdots\bT_{i_{k-3}}\bT_{i_k}\bT_{i_{k-1}}(B_{i_k})\\
        =&\bT_{i_1}\cdots\bT_{i_{k-3}}(B_{i_{k-1}})\\
        \overset{(\heartsuit')}{=}&\sigma_{i_1}\cdots\sigma_{i_{k-3}}(F_{i_{k-1}})+\text{lower terms}\\
        =&F_{\mathbf{i}}(\mathbf{e}_k)+\text{lower terms}.
    \end{split}
    \end{equation*}
    Here $(\heartsuit')$ follows from the induction hypothesis. 

    We complete the proof of the lemma.
\end{proof}

We next show that elements constructed in \eqref{eq:PBW} form a basis.

\begin{prop}\label{prop:PBWb}
For any reduced expression $\mathbf{i}$, the set
\begin{equation}\label{eq:PBWb}
    \{B_{\mathbf{i}}(\mathbf{a})k(\mathbf{b})\mid \mathbf{a}\in\mathbb{N}^r,\mathbf{b}\in\mathbb{Z}^n\}
\end{equation}
forms a $\Qq$-linear basis of the algebra $\tUi$.
\end{prop}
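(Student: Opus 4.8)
The plan is to prove \textbf{linear independence} and \textbf{spanning} of the set \eqref{eq:PBWb} separately. For linear independence I would play Lemma~\ref{le:exp} against the known PBW basis \eqref{eq:PBW0} of $\tU$. First observe that $k(\mathbf{b})=K(\mathbf{b})K'(\mathbf{b})$ (the $k_i$ and $K_j,K_j'$ lie in the commutative $\tU^0$), so Lemma~\ref{le:exp} gives
\[
B_{\mathbf{i}}(\mathbf{a})k(\mathbf{b})=F_{\mathbf{i}}(\mathbf{a})K(\mathbf{b})K'(\mathbf{b})+(\text{terms of strictly smaller degree}),
\]
where $F_{\mathbf{i}}(\mathbf{a})K(\mathbf{b})K'(\mathbf{b})$ is precisely the PBW basis vector of \eqref{eq:PBW0} with trivial $E$-part. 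Suppose $\sum_{(\mathbf{a},\mathbf{b})}c_{\mathbf{a},\mathbf{b}}B_{\mathbf{i}}(\mathbf{a})k(\mathbf{b})=0$ is a nontrivial finite relation in $\tUi$. Among the pairs with $c_{\mathbf{a},\mathbf{b}}\neq 0$ pick one for which $\deg F_{\mathbf{i}}(\mathbf{a})$ is maximal with respect to $\preceq$, and extract the homogeneous component of that degree. Since $k(\mathbf{b})$ has degree $0$ and the maximality of the chosen degree rules out any contribution of the lower-degree tails of the various $B_{\mathbf{i}}(\mathbf{a}')$, this produces a nontrivial $\Qq$-linear dependence among distinct PBW basis vectors $F_{\mathbf{i}}(\mathbf{a})K(\mathbf{b})K'(\mathbf{b})$ of $\tU$, a contradiction; hence all $c_{\mathbf{a},\mathbf{b}}=0$.

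For spanning I would filter $\tUi$ by word length in the generators $B_1,\dots,B_n$, placing each $k_i^{\pm1}$ in degree $0$; let $F^{\le d}$ denote the $\Qq$-span of products of at most $d$ of the $B_i$'s times an element of $\tUio$. Relations \eqref{eq:R0} and \eqref{eq:R1} are homogeneous, and the degree-$3$ part of \eqref{eq:R2} is exactly the quantum Serre relation for the symbols $\overline{B_i}$ (the right-hand side $(q-q^{-1})^2B_jk_i$ lies in $F^{\le1}$). Since $\tU^{<}$ is, as a standard fact in type A, presented on the $F_i$ by precisely these Serre relations, and $\tUio\cong\Qq[k_1^{\pm1},\dots,k_n^{\pm1}]$ is central, there is a surjection of graded algebras $\pi\colon\tU^{<}\otimes_{\Qq}\tUio\twoheadrightarrow\operatorname{gr}\tUi$, $F_i\mapsto\overline{B_i}$, $k_i\mapsto\overline{k_i}$. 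The key point is then to identify, for each $\ell$, the symbol of the $\imath$-root vector $\bT_{i_1}\cdots\bT_{i_{\ell-1}}(B_{i_\ell})$ with $\pi$ applied to the quantum-group root vector $\sigma_{i_1}\cdots\sigma_{i_{\ell-1}}(F_{i_\ell})\in\tU^{<}$; using Lemma~\ref{le:exp} together with the elementary fact that each $\bT_j(B_m)$ is a $q$-commutator of at most two $B$'s, one checks along the reduced word that this root vector has word length exactly $\operatorname{ht}(\beta_\ell)$, where $\beta_\ell=s_{i_1}\cdots s_{i_{\ell-1}}\alpha_{i_\ell}$. Multiplying these up yields $\operatorname{gr}\big(B_{\mathbf{i}}(\mathbf{a})k(\mathbf{b})\big)=\pi\big(F_{\mathbf{i}}(\mathbf{a})k(\mathbf{b})\big)$; since $\{F_{\mathbf{i}}(\mathbf{a})k(\mathbf{b})\}$ spans $\tU^{<}\otimes\tUio$ and $\pi$ is onto, the symbols of the $B_{\mathbf{i}}(\mathbf{a})k(\mathbf{b})$ span $\operatorname{gr}\tUi$, so $\{B_{\mathbf{i}}(\mathbf{a})k(\mathbf{b})\}$ spans $\tUi$. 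Together with linear independence this proves the proposition, and shows moreover that $\pi$ is an isomorphism, i.e.\ $\operatorname{gr}\tUi\cong\tU^{<}\otimes\tUio$.

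The main obstacle is the word-length bookkeeping for the $\imath$-root vectors in the spanning step: one must rule out any length collapse and confirm that the leading symbol of $\bT_{i_1}\cdots\bT_{i_{\ell-1}}(B_{i_\ell})$ is the expected Serre root vector of height $\operatorname{ht}(\beta_\ell)$. This is the $\imath$-analogue of the classical Levendorskii--Soibelman/Lusztig computation for $\tU$; Lemma~\ref{le:exp} already pins down the $Q$-degree leading term and serves as the bridge to the quantum group, while the rank-two relations \eqref{eq:R1}--\eqref{eq:R2}, mirroring those defining $\tU^{<}$, carry the induction. An alternative route to spanning, avoiding associated graded algebras, is to show directly that $\operatorname{span}_{\Qq}\{B_{\mathbf{i}}(\mathbf{a})k(\mathbf{b})\}$ is stable under left multiplication by each $B_i$ — it trivially contains $1$ and is stable under the central subalgebra $\tUio$ — but this reduces to essentially the same Levendorskii--Soibelman-type straightening relations for the $B$-root vectors.
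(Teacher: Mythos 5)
Your proposal is correct in outline and its independence half coincides with the paper's: both play Lemma~\ref{le:exp} against the PBW basis \eqref{eq:PBW0} of $\tU$, extracting the top homogeneous component for the $Q$-grading. The spanning half, however, is executed differently. The paper does not attempt to identify the leading symbols of the root vectors at all: it invokes \cite{WZ22}*{Proposition 2.6} to get that $\tUi$ is a free $\tUio$-module on monomials $B_J$ indexed by a monomial basis $\{F_J\}$ of $\tU^{<}$, defines the word-length filtration via that basis, proves only the \emph{upper bound} $B_{\mathbf{i}}(\mathbf{a})\in\mathcal{F}^{n(\mathbf{i},\mathbf{a})}(\tUi)$ (by expanding in the triangular decomposition and comparing $Q$-degrees via Lemma~\ref{le:exp}), and then concludes by a rank count against the already-established linear independence. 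This sidesteps precisely the step you single out as the main obstacle. Your route instead builds the surjection $\pi\colon\tU^{<}\otimes\tUio\twoheadrightarrow\operatorname{gr}\tUi$ from the Serre presentation and requires the exact symbol identification $[B_{\mathbf{i}}(\mathbf{a})k(\mathbf{b})]=\pi(F_{\mathbf{i}}(\mathbf{a})k(\mathbf{b}))$, an $\imath$-analogue of Levendorskii--Soibelman; in exchange you get the stronger structural conclusion $\operatorname{gr}\tUi\cong\tU^{<}\otimes\tUio$ (which is essentially the WZ22 input the paper takes off the shelf).

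Two points of care if you pursue your version. First, the justification ``each $\bT_j(B_m)$ is a $q$-commutator of at most two $B$'s, one checks along the reduced word'' is not by itself enough for the iterated root vectors, since the intermediate elements are no longer single generators; the clean way to nail down both the non-collapse and the exact symbol is the paper's mechanism: expand $B_{\mathbf{i}}(\mathbf{a})=\sum_J B_Jk_{\mathbf{a},J}$ over the $\tUio$-basis, compare $\tU^{<}$-components in the triangular decomposition using Lemma~\ref{le:exp}, and observe that the $J$ of maximal length occurring are exactly those with $\deg F_J=\sum_t a_t\beta_t$, with coefficients matching the expansion of $F_{\mathbf{i}}(\mathbf{a})$. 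Second, do not deduce $\operatorname{gr}(B_{\mathbf{i}}(\mathbf{a}))$ by multiplying the symbols of the individual root vectors: $\operatorname{gr}$ is not multiplicative without first ruling out collapse in the product, so the degree comparison should be run on $B_{\mathbf{i}}(\mathbf{a})$ directly. With these repairs your argument closes, and it is a legitimate alternative to (indeed a refinement of) the counting argument in the paper.
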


\begin{proof}
Thanks to Lemma \ref{le:exp} and the PBW-bases \eqref{eq:PBW0} of $\tU$, we deduce that elements in \eqref{eq:PBWb} are linearly independent. Recall the central subalgebra $\tUio$ in Section \ref{sec:iqg}. It is easy to see that $\{k(\mathbf{b})\mid \mathbf{b}\in\mathbb{Z}^n\}$ forms a linear basis of $\tUio$. Therefore we only need to show that $\tUi$ is spanned by elements $B_{\mathbf{i}}(\mathbf{a})$, for $\mathbf{a}\in\mathbb{N}^r$, as a $\tUio$-module. 

For a multi-index $J=(j_1,j_2,\cdots,j_k)\in [1,n]^k$, we denote $F_J=F_{j_1}F_{j_2}\cdots F_{j_k}$ and $B_J=B_{j_1}B_{j_2}\cdots B_{j_k}$. It is clear that
\begin{equation}\label{eq:bfi}
    B_J=F_J+\text{lower terms},
\end{equation}
where lower terms are linear combinations of elements of degrees smaller than the degree of $F_J$.

Take a subset $\mathcal{J}\subset \cup_{k=0}^\infty[1,n]^k$ such that the set $\{F_J\mid J\in\mathcal{J}\}$ forms a $\Qq$-basis of $\tU^{<}$. Then by \cite{WZ22}*{Proposition 2.6}, the set $\{B_J\mid J\in\mathcal{J}\}$ forms a $\tUio$-basis of $\tUi$. 

Define a filtration $\mathcal{F}^*$ on $\tUi$ by setting $\mathcal{F}^k(\tUi)$ to be the $\tUio$-span of elements $B_J$, with $J\in\mathcal{J}$ and $|J|\leq k$. Here we denote $|J|$ to be the length of the index $J$.

Take $\mathbf{a}=(a_1,\cdots,a_r)\in\mathbb{N}^r$. We expand
\begin{equation}\label{eq:exbk}
    B_{\mathbf{i}}(\mathbf{a})=\sum_{J\in\mathcal{J}}B_Jk_{\mathbf{a},J},
\end{equation}
with $k_{\mathbf{a},J}\in\tUio$. We expand both sides of \eqref{eq:exbk} in terms of triangular decomposition \eqref{eq:tria} of $\tU$, and compare the terms in the subalgebra $\tU^{<}$. By Lemma \ref{le:exp} and \eqref{eq:bfi}, we deduce that 
\begin{equation}\label{eq:comp}
    \alpha_{j_1}+\alpha_{j_2}+\cdots+\alpha_{j_k}\preceq a_1\beta_1+a_2\beta_2+\cdots+a_r\beta_r,
\end{equation}
for any $J=(j_1,\cdots,j_k)\in\mathcal{J}$ with $k_{\mathbf{a},J}\neq 0$. Here $\beta_t=s_{i_1}\cdots s_{i_{t-1}}(\alpha_t)$, for $1\leq t\leq r$. 

For any positive root $\gamma=\sum_{i=1}^nc_i\alpha_i$, let us denote $|\gamma|=\sum_{i=1}^nc_i$. Then \eqref{eq:comp} implies that 
\begin{equation*}
    |J|\leq a_1|\beta_1|+a_2|\beta_2|+\cdots+a_r|\beta_r|.
\end{equation*} 

Therefore we have 
\begin{equation*}
    B_{\mathbf{i}}(\mathbf{a})\in \mathcal{F}^{n(\mathbf{i},\mathbf{a})}(\tUi),\quad\text{where }n(\mathbf{i},\mathbf{a})=a_1|\beta_1|+a_2|\beta_2|+\cdots+a_r|\beta_r|.
\end{equation*}

On the other hand, by the definition of the filtration we have 
\begin{equation*}
    \text{dim}_{\tUio}(\mathcal{F}^k(\tUi))=|\{J\in\mathcal{J}\mid |J|\leq k\}|=|\{\mathbf{a}\in\mathbb{N}^r\mid n(\mathbf{i},\mathbf{a})\leq k\}|,
\end{equation*}
where the second equality follows from the PBW theorem of the algebra $\tU^<$.

Since elements $B_{\mathbf{i}}(\mathbf{a})$ ($\mathbf{a}\in\mathbb{N}^r$) are $\tUio$-linearly independent, we deduce that the set $\{B_{\mathbf{i}}(\mathbf{a})\mid n(\mathbf{i},\mathbf{a})\leq k\}$ spans $\mathcal{F}^k(\tUi)$ as a $\tUio$-module, by counting the dimension. This completes the proof.
\end{proof}

The basis in Proposition \ref{prop:PBWb} is called the \emph{PBW-basis} of $\tUi$ associated with the reduced expression $\mathbf{i}$. 

By the central reduction $\pi^\imath$ (see \eqref{eq:icen}), the set 
\begin{equation}
    \{B_{\mathbf{i}}(\mathbf{a})\mid \mathbf{a}\in\mathbb{N}^r\}
\end{equation}
forms a $\Qq$-linear basis of the algebra $\dUi$, which is referred to as the PBW-basis of $\dUi$ associated with the reduced expression $\mathbf{i}$.


\subsection{Quasi $K$-matrices and quantum dilogarithm}\label{sec:qK}

The \emph{quantum dilogarithm} $\Psi_q(x)$ plays important role in cluster algebras (see \cite{FG09}). It is defined to be the formal power series
\begin{equation}
    \Psi_q(x)=\frac{1}{(1+qx)(1+q^3x)(1+q^5x)\cdots}.
\end{equation}
It is the unique element in $\mathbb{Q}(q)[[x]]$ with the constant term 1 and the difference relation
\begin{equation}\label{eq:diff}
    \Psi(q^2x)=(1+qx)\Psi_q(x).
\end{equation}

By this characterisation, one can verify that $\Psi_q(x)$ has the expansion
\begin{equation}
    \Psi_q(x)=\sum_{n\geq 0}\frac{q^{-n(n-1)/2}}{(q-q^{-1})(q^2-q^{-2})\cdots(q^n-q^{-n})}x^n.
\end{equation}

Suppose we have $xy=q^{2\lambda}yx$, for some integer $\lambda$ in a non-commutative integral domain over $\mathbb{Q}(q)$. Then we have
\[  
\Psi_q(x)y=y\Psi_q(q^{2\lambda}x),
\]
in a certain completion of the algebra. Hence by \eqref{eq:diff} we get
\begin{equation}\label{eq:qadj}
    \text{Ad}_{\Psi_q(x)}(y)=\begin{cases}
        y(1+qx)(1+q^3x)\cdots(1+q^{2\lambda-1}x), &\text{if }\lambda> 0,\\
        y, &\text{if }\lambda=0,\\
        y\big((1+q^{-1}x)(1+q^{-3}x)\cdots(1+q^{2\lambda+1}x)\big)^{-1}, &\text{if }\lambda<0.
    \end{cases}
\end{equation}

Recall the quasi $K$-matrices in Section \ref{sec:K}. The following lemma relates quasi $K$-matrices with quantum dilogarithm.

\begin{lemma}\label{le:Klog}
For $1\leq i\leq n$ we have
\begin{equation}
    \tK_i=\Psi_{q^2}(-E_i^2).
\end{equation}
\end{lemma}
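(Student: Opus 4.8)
The plan is to exploit the uniqueness characterisation of $\tK_i$ given in \S\ref{sec:K}: $\tK_i$ is the \emph{unique} element of $\widehat{\mathrm{U}}^{>0}_n$ of the form $\sum_{m\geq 0}a_{2m}E_i^{2m}$ with $a_0=1$ satisfying the intertwining relation \eqref{eq:it}, namely $B_i\tK_i=\tK_i(F_i-qE_iK_i)$. So it suffices to verify that $\Psi_{q^2}(-E_i^2)$ lies in the right completion, has constant term $1$, is a power series in $E_i^2$ (which is clear since $\Psi$ is a power series and we substitute $-E_i^2$), and satisfies the same relation. The degree bookkeeping is fine: $E_i^2$ has degree $-2\alpha_i \in Q_-$, so $\Psi_{q^2}(-E_i^2) = \sum_m c_m E_i^{2m}$ with $c_m\in\Qq$ and $c_0=1$, as required.

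The computational heart is checking \eqref{eq:it} for $x=\Psi_{q^2}(-E_i^2)$. Here I would work inside the rank-one subalgebra generated by $E_i, F_i, K_i, K_i'$ (so $B_i = F_i - q^{-1}E_iK_i'$), using only the relations from Definition~\ref{def:tU}. The difference equation \eqref{eq:diff} for $\Psi_q$, rescaled to base $q^2$, reads $\Psi_{q^2}(q^4 x) = (1+q^2 x)\Psi_{q^2}(x)$. Set $x = -E_i^2$. I first compute the commutation of $F_i$, and of $E_iK_i'$, past $\Psi_{q^2}(-E_i^2)$. Since $E_i$ commutes with itself, $E_iK_i'$ commutes with $\Psi_{q^2}(-E_i^2)$ up to the scalar coming from $K_i'E_i = q^{-a_{ii}}E_iK_i' = q^{-2}E_iK_i'$; one finds $E_i^2 \cdot (E_iK_i') = (E_iK_i')\cdot q^{-4}E_i^2$ essentially, i.e. the adjoint action of $\Psi_{q^2}(-E_i^2)$ on $E_iK_i'$ produces exactly a factor $(1+q^2(-E_i^2))^{\pm1}$ via \eqref{eq:qadj} with the appropriate $\lambda$. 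For $F_i$ the key input is $[E_i, F_i] = (q-q^{-1})(K_i'-K_i)$ from Definition~\ref{def:tU}, which lets me move $F_i$ past powers of $E_i^2$ at the cost of $K_i, K_i'$ terms; organising this via a generating-function identity (rather than term-by-term) using \eqref{eq:diff} is the clean way. The target is to show that after moving everything to one side, $B_i\tK_i - \tK_i(F_i - qE_iK_i)$ collapses to $0$, the $(1+q^2 x)$ factor from \eqref{eq:diff} precisely matching the discrepancy between $F_i - q^{-1}E_iK_i'$ and $F_i - qE_iK_i$.

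Concretely the steps in order: (1) state the uniqueness of $\tK_i$ from \S\ref{sec:K} and reduce to verifying the three conditions for $Y_i := \Psi_{q^2}(-E_i^2)$; (2) record that $Y_i\in\widehat{\mathrm{U}}^{>0}_n$ with constant term $1$; (3) compute $Y_i$ conjugated against $E_iK_i'$ and against $K_i$ using the $K$-relations, obtaining explicit $(1+q^2(-E_i^2))$-type correction factors from \eqref{eq:qadj}; (4) compute $F_i Y_i$ versus $Y_i F_i$ using $[E_i,F_i]$ and the rescaled difference relation \eqref{eq:diff}; (5) assemble and check the two sides of \eqref{eq:it} agree; (6) conclude $\tK_i = Y_i$ by uniqueness.

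The main obstacle I expect is step (4): moving $F_i$ through the infinite series $\Psi_{q^2}(-E_i^2)$ cleanly. Unlike the $E_iK_i'$ term, $F_i$ does not merely pick up a scalar — each pass through $E_i^2$ spawns lower-order terms involving $K_i'$ and $K_i$, so a naive term-by-term computation is messy. The right device is to observe that in the rank-one situation the relevant identity is formally the defining difference equation of $\Psi_{q^2}$: one wants something like $F_i\,\Psi_{q^2}(-E_i^2) = \Psi_{q^2}(-E_i^2)\,F_i + (\text{stuff})$ where the ``stuff'' is absorbed by the $(1+q^2 x)$ prefactor in \eqref{eq:diff} when paired against the $-q^{-1}E_iK_i'$ half of $B_i$. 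Making this absorption precise — identifying exactly which shift of the variable $x=-E_i^2$ corresponds to conjugation by $K_i'$ versus $K_i$ — is the delicate bookkeeping, but it is purely a rank-one $\mathfrak{sl}_2$-type computation and should go through; alternatively one can cite the known closed form of the rank-one quasi $K$-matrix for $\mathfrak{sl}_2$ (e.g. from \cite{BW18} or \cite{WZ22}) and simply translate it through the rescaling \eqref{eq:resc0}, which would shortcut the verification entirely.
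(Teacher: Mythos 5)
Your proposal is correct and essentially matches the paper's proof: both arguments rest on the uniqueness of $\tK_i$ as an even power series in $E_i$ with constant term $1$ satisfying \eqref{eq:it}, combined with the rank-one computation of commuting $F_i$ past powers of $E_i^2$ and the $K$-relations. The only difference is direction --- the paper plugs the ansatz $\sum a_{2n}E_i^{2n}$ into \eqref{eq:it}, derives the recursion $a_{2n}=\frac{-q^{-2n+2}}{q^{2n}-q^{-2n}}a_{2n-2}$, and recognizes the solution as the expansion of $\Psi_{q^2}(-E_i^2)$, whereas you verify the closed form directly; the step (4) you flag as the main obstacle is handled in the paper by the single identity $F_iE_i^{2n}=E_i^{2n}F_i+(q^{4n-1}-q^{-1})E_i^{2n-1}K_i+(q^{-4n+1}-q)E_i^{2n-1}K_i'$, proved by a short induction on $n$, so no generating-function device is actually needed.
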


\begin{proof}
    Write $\tK_i=\sum_{n\geq 0}a_{2n}E_i^{2n}$, with $a_{2n}\in\Qq$ and $a_0=1$. By \eqref{eq:it} one has
    \begin{equation*}
        (F_i-q^{-1}E_iK_i')(\sum_{n\geq 0}a_{2n}E_i^{2n})=(\sum_{n\geq 0}a_{2n}E_i^{2n})(F_i-qE_iK_i).
    \end{equation*}

    By the degree consideration, we have
    \begin{equation}\label{eq:a2n}
        a_{2n}F_iE_i^{2n}-q^{-4n+3}a_{2n-2}E_i^{2n-1}K_i'=a_{2n}E_i^{2n}F_i-qa_{2n-2}E_i^{2n-1}K_i,   
    \end{equation}
    for $n\geq 1$.

    The following equality can be verified directly by induction on $n$,
    \begin{equation*}
        F_iE_i^{2n}=E_i^{2n}F_i+(q^{4n-1}-q^{-1})E_i^{2n-1}K_i+(q^{-4n+1}-q)E_i^{2n-1}K_i'.
    \end{equation*}

    Plug into \eqref{eq:a2n}. By the triangular decomposition, we get
    \begin{equation*}
        a_{2n}=\frac{-q^{-2n+2}}{q^{2n}-q^{-2n}}a_{2n-2}.
    \end{equation*}

    Hence we have
    \begin{equation*}
        \tK_i=\sum_{n\geq 0}\frac{q^{-n(n-1)}}{(q^2-q^{-2})(q^4-q^{-4})\cdots (q^{2n}-q^{-2n})}(-E_i^2)^n=\Psi_{q^2}(-E_i^2).
    \end{equation*}
\end{proof}

\begin{remark}\label{rmk:qkd}
    By \cite{WZ22}*{Theorem 8.1 (2)} (see also \cite{DK19}*{Theorem A}), the quasi $K$-matrix $\widetilde{\Upsilon}$ of type AI admits a factorisation into a product of rank one quasi $K$-matrices. In light of Lemma \ref{le:Klog}, we can then write $\widetilde{\Upsilon}$ as a product of quantum dilogarithm:
    \begin{equation}\label{eq:qkd}
    \widetilde{\Upsilon}=\overset{\rightarrow}{\prod_{\alpha\in \mathcal{R}_+}}\Psi_{q^2}(-E_\alpha^2).
    \end{equation}
    Here the product is taken in the total order associated with a reduced expression of the longest element, and $E_\alpha$ are (rescaled) root vectors associated with the reduced expression. This factorisation is analogous to the well-known factorisation of quasi $R$-matrix (cf. \cite{SS19}*{(36)}).
\end{remark}

\subsection{Integral forms}\label{sec:intf}

Recall $\A=\mathbb{Z}[q^{1/2},q^{-1/2}]$, and recall the rescaled integral form $\ZtU$ in Section \ref{sec:braid}. We consider the integral form of the $\imath$quantum group induced by $\ZtU$.

\begin{defi}\label{def:int}
The integral form $\ZtUi$ of $\tUi$ is defined to be the intersection
\begin{equation}
    \ZtUi=\tUi\cap\ZtU.
    \end{equation}
\end{defi}

Then $\ZtUi$ is an $\A$-subalgebra of $\tUi$. We next show that this subalgebra is invariant under braid group symmetries of $\imath$quantum groups. 

Recall the algebra automorphisms $\{\bT_i\mid 1\leq i\leq n\}$ in Section \ref{sec:braid}.

\begin{prop}\label{prop:invT}
    The $\A$-subalgebra $\ZtUi$ is invariant under the actions $\bT_i$, $\bT_i^{-1}$, for $1\leq i\leq n$.
\end{prop}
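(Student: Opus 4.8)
The plan is to reduce the claim for $\ZtUi$ to the known invariance of the ambient integral form $\ZtU$ under the braid symmetries $\oT_i$ of $\tU$, using the intertwining relation \eqref{eq:intw} and the explicit description of the quasi $K$-matrix from Lemma \ref{le:Klog}. Since $\ZtUi = \tUi \cap \ZtU$ and $\bT_i$ is an automorphism of $\tUi$, it suffices to show $\bT_i(\ZtUi) \subseteq \ZtUi$ and $\bT_i^{-1}(\ZtUi)\subseteq \ZtUi$; because $\bT_c^{n+1} = \mathrm{id}$ (Proposition \ref{prop:tci}) and more simply because each $\bT_i$ has finite order issues aside, it is cleanest to prove the two inclusions $\bT_i(\ZtUi)\subseteq\ZtUi$ and $\bT_i^{-1}(\ZtUi)\subseteq\ZtUi$ directly and symmetrically. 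Fix $i$ and take $x \in \ZtUi = \tUi \cap \ZtU$. Then $\bT_i^{-1}(x) \in \tUi$ automatically, so the content is showing $\bT_i^{-1}(x) \in \ZtU$.

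The key step is the following. By \eqref{eq:intw}, in a suitable completion of $\tU$ we have $\oT_i^{-1}(x)\,\tK_i = \tK_i\,\bT_i^{-1}(x)$, hence
\begin{equation*}
    \bT_i^{-1}(x) = \tK_i^{-1}\,\oT_i^{-1}(x)\,\tK_i = \mathrm{Ad}_{\tK_i^{-1}}\big(\oT_i^{-1}(x)\big).
\end{equation*}
Now by Lemma \ref{le:Klog}, $\tK_i = \Psi_{q^2}(-E_i^2)$, so $\tK_i^{-1} = \Psi_{q^2}(-E_i^2)^{-1} = \prod_{m\geq 0}(1 + q^{4m+2}(-E_i^2))$, an infinite product which, applied by conjugation to a fixed homogeneous element, truncates to a finite product. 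The crucial point is that conjugation by $\Psi_{q^2}(-E_i^2)^{\pm1}$ preserves $\ZtU$: using the commutation relations of $E_i$ with a PBW generator and formula \eqref{eq:qadj}, the adjoint action $\mathrm{Ad}_{\Psi_{q^2}(-E_i^2)^{\pm1}}$ multiplies an element by a finite product of factors $(1 + q^{2r-1}(-E_i^2)^{\pm1}\cdots)$ whose coefficients lie in $\A$; and $E_i \in \ZtU$, $E_i^{-1}$ does not appear because the exponents occurring are non-negative when one works on the correct side. I would make this precise by choosing the reduced word $\mathbf i$ to start (or end, for the $\bT_i$ case) with $i$, so that $E_i = F_{\mathbf i}(\mathbf e_1)$ (up to the conventions) is literally a PBW generator and $\mathrm{Ad}_{\tK_i^{\pm 1}}$ acts on PBW monomials by multiplication by Laurent polynomials in $E_i$ and the torus part with $\A$-coefficients, using that $\ZtU$ is a subalgebra (Theorem cited from \cite{Sh22}) containing all $E_j^{(k)}$-type and $K$-type factors. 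Combined with the fact that $\oT_i^{\pm1}(\ZtU) = \ZtU$ (invariance of the rescaled integral form of the quantum group under Lusztig symmetries, which is part of what $\ZtU$ being independent of $\mathbf i$ encodes), we conclude $\bT_i^{\pm1}(x) \in \ZtU$, hence in $\ZtUi$.

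The main obstacle I anticipate is the bookkeeping around the completion and the integrality of the coefficients produced by $\mathrm{Ad}_{\tK_i^{\pm 1}}$. One must check: (a) the product $\tK_i^{-1}\,\oT_i^{-1}(x)\,\tK_i$ actually lies in $\tU$ (not just its completion) — this follows since $\bT_i^{-1}(x)\in\tUi\subset\tU$ is given a priori, so \eqref{eq:intw} is being used only to identify the value; (b) that when we expand $\oT_i^{-1}(x)$ in the PBW basis adapted to a word beginning with $i$ and conjugate term by term, each resulting coefficient function of $E_i$ and the Cartan part is an honest element of $\ZtU$ — here \eqref{eq:qadj} gives products like $(1+q^{2\lambda\mp1}(-E_i^2))\cdots$ with $\lambda$ the relevant $q$-commutation exponent, and one needs the sign of $\lambda$ to come out so that only the polynomial (not the inverse) branch occurs, which is exactly where choosing the word adapted to $i$ pays off; and (c) assembling these into a statement about the $\A$-span. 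A cleaner alternative, which I would pursue in parallel, is to avoid PBW expansions entirely: show directly that $\mathrm{Ad}_{\tK_i}$ and $\mathrm{Ad}_{\tK_i^{-1}}$ preserve $\ZtU$ as operators (since $\tK_i^{\pm1}$ is a product of $\A$-coefficient factors $(1 + q^{\text{odd}}E_i^2)^{\pm 1}$ and $\ZtU$ is stable under left/right multiplication by $(1 + q^{\text{odd}}E_i^2)$ and, on the appropriate side, its inverse), and that $\oT_i^{\pm1}$ preserves $\ZtU$; then the formula $\bT_i^{\pm1} = \mathrm{Ad}_{\tK_i^{\mp1}}\circ \oT_i^{\pm1}$ on $\tUi$ finishes it immediately.
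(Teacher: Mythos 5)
Your skeleton matches the paper's at the top level: the identity $\bT_i^{-1}=\mathrm{Ad}_{\tK_i^{-1}}\circ\oT_i^{-1}$ on $\tUi$ coming from \eqref{eq:intw}, Lemma \ref{le:Klog}, formula \eqref{eq:qadj}, and the invariance of $\ZtU$ under $\oT_i^{\pm1}$ are exactly the ingredients used there. But there is a genuine gap where you invoke \eqref{eq:qadj}: that formula requires the conjugated element $y$ to satisfy $xy=q^{2\lambda}yx$ with $x=-E_i^2$ for a single integer $\lambda$, and this fails for general PBW monomials of $\tU$ — for instance $[E_i,F_j]=\delta_{ij}(q-q^{-1})(K_i'-K_i)$, and $E_i$ does not $q$-commute with $E_j$ for $|i-j|=1$ by the Serre relation, so no choice of reduced word makes all PBW generators $q$-commute with $E_i^2$. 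Your fallback, that $\mathrm{Ad}_{\tK_i^{\pm1}}$ preserves $\ZtU$ "as operators", is false already at the level of $\tU$: from \eqref{eq:it} one gets $\mathrm{Ad}_{\tK_i^{-1}}(F_i)=F_i-qE_iK_i+q^{-1}E_iK_i'(1-q^2E_i^2)^{-1}$, an honest infinite series, so conjugation by $\tK_i^{\pm1}$ only lands in the completion. The finiteness of $\bT_i^{-1}(x)$ for $x\in\tUi$ comes from cancellations \emph{across} PBW terms, which kills both the term-by-term and the operator version of your argument; and $\ZtU$ is certainly not stable under multiplication by $(1+q^{\mathrm{odd}}E_i^2)^{-1}$.

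The paper's way around this is precisely where the cluster realisation enters (this is why the introduction says the proof "relies crucially" on it): one mutates $\qD_n$ to a chart $\qD_n^i$ in which $t(E_i)$ becomes a single frozen cluster variable $V_i$. In the quantum torus $\T_{\qD_n^i}$ \emph{every} Laurent monomial genuinely $q$-commutes with $V_i^2$, so \eqref{eq:qadj} applies monomial by monomial, and $\mathrm{Ad}_{\Psi_{q^2}(-V_i^2)}$ multiplies $t(\oT_i^{-1}(x))$ by $\bigl((1-q^2V_i^2)(1+q^6V_i^4)\cdots(1+(-1)^sq^{4s-2}V_i^{2s})\bigr)^{-1}$ for a finite $s$; the finiteness uses that $t(\tUi)$ lies in the subfield generated by unfrozen variables, right frozen variables and the \emph{squares} of left frozen variables, which is stable under quasi-cluster automorphisms. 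Integrality of the coefficients of $t(\bT_i^{-1}(x))$ then follows by dividing, inside the quantum torus, by a polynomial in $V_i^2$ with constant term $1$ and $\A$-coefficients, and one concludes via $\ZtU=\tU\cap\mathcal{O}_q(\mathcal{X}_{|\qD_n|})$ from Theorem \ref{thm:tqg}. Finally, the paper does not treat $\bT_i$ "symmetrically": it deduces invariance under $\bT_i$ from invariance under all $\bT_j^{-1}$ via the cyclic identity $\bT_c^{n+1}=\mathrm{id}$ of Proposition \ref{prop:tci} and induction on $i$ — a step your proposal would also need to supply, since \eqref{eq:intw} as stated only gives the intertwiner for $\bT_i^{-1}$.
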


\begin{proof}
    We firstly show that $\ZtUi$ is invariant under actions $\bT_i^{-1}$, for $1\leq i\leq n$.

    Take any $x\in\ZtUi$. It will suffice to show that $\bT_i^{-1}(x)$ belongs to $\ZtU$. Recall the embedding $t$ in Theorem \ref{thm:tqg}. By \cite{SS19}*{Remark 3}, after applying a sequence of mutations, we get the quiver $\qD_n^i$, such that $t(E_i)$ becomes the frozen variable $V_i$ in $\T_{\qD_n^i}$. Suppose $t(\bT_i^{-1}(x))$ has the expansion
\begin{equation}\label{eq:ext}
    t(\bT_i^{-1}(x))=\sum_v c_v X_v\qquad \text{in }\T_{\qD_n^i}.
\end{equation}
Here $X_v$'s are Laurent monomials in $\T_{\qD_n^i}$ and $c_v\in\Qq$. It will suffice to show that each $c_v$ belongs to $\A$, thanks to Theorem \ref{thm:tqg}. 



For any quiver $Q$, which is mutation-equivalent to $\qD_n$, let $\tT'_Q$ be the sub skew-field of the fraction field $\tT_Q$, which is generated by all unfrozen variables, frozen variables $\Lambda_k$ ($1\leq k\leq n$) on the right column, and the squares of the frozen variables $V_k^2$ ($1\leq k\leq n$) on the left column. It is clear that $\tT'_Q$ is preserved under mutations, that is, $\mu_k(\tT'_Q)=\tT_{\mu_kQ}'$ for any unfrozen vertex $k$. Therefore $\tT_Q'$ is preserved under any quasi-cluster automorphisms. 

It is direct to verify that under the map $t$, images of elements $B_i=F_i-q^{-1}E_iK_i'$, $k_i=K_iK_i'$, for $1\leq i\leq n$, belong to $\tT'_{\qD_n}$. Therefore in the cluster chart associated with $\qD_n^i$, the image $t(\tUi)$ is contained in $\tT_{\qD_n^i}'$. In particular $t(x)\in\tT_{\qD_n^i}'$. By \cite{GS22}*{13.6}, Lusztig's braid group action $\sigma_i^{-1}$ is a quasi-cluster automorphism. Therefore we deduce $t(\sigma^{-1}_i(x))\in\tT_{\qD_n^i}'$. Note that $\sigma_i^{-1}(x)\in\ZtU$. Therefore when expanding in $\T_{\qD_n^i}$, the element $t(\sigma_i^{-1}(x))$ is an $\A$-linear combination of Laurent monomials, where the degrees of $V_k$, for $1\leq k\leq n$, are even, in each monomials shown in the expansion. 

Next we consider the adjoint action $\text{Ad}_{\Psi_{q^2}(-V_i^2)}$ of $\tT_{\qD_n^i}'$. It is easy to see that for any generator $Y$ (which is either the square of frozen variables on the left column, or frozen variables on the right column, or unfrozen variables) of $\tT_{\qD_n^i}'$, we have commuting relation $V_i^2Y=q^{4\lambda}YV_i^2$ for some integer $\lambda$. Therefore by \eqref{eq:qadj}, we have the following equality in $\tT_{\qD_n^i}$,
\begin{equation}\label{eq:adp}
    \text{Ad}_{\Psi_{q^2}(-V_i^2)}(t(\sigma_i^{-1}(x)))=\big(\sum_{v'}b_{v'}X_{v'}\big)\big((1-q^2V_i^2)(1+q^6V_i^4)\cdots(1+(-1)^sq^{4s-2}V_i^{2s})\big)^{-1}.
\end{equation}
Here $s$ is some positive integer, $X_{{v'}}$'s are Laurent polynomials in $\T_{\qD_n^i}$ and coefficients $b_{v'}$ belong to $\A$.

Recall that $t(E_i)=V_i$ in $\T_{\qD_n^i}$. By \eqref{eq:intw} and Lemma \ref{le:Klog},  
we have
\begin{equation}
    t(\bT_i^{-1}(x))=t(\text{Ad}_{\Psi_{q^2}(-E_i^2)}\circ\sigma_i^{-1}(x))=\text{Ad}_{\Psi_{q^2}(-V_i^2)}(t(\sigma_i^{-1}(x)))\quad \text{in }\tT_{\qD_n^i}.
\end{equation}

Compare the right hand sides of \eqref{eq:ext} and \eqref{eq:adp}. We get
\begin{equation}
    \big(\sum_vc_vX_v\big)(1-q^2V_i^2)(1+q^6V_i^4)\cdots(1+(-1)^sq^{4s-2}V_i^{2s})=\sum_{v'}b_{v'}X_{v'}.
\end{equation}
Since $b_{v'}\in \A$, it is then easy to deduce that each $c_v$ belongs to $\A$. We proved that $\ZtUi$ is invariant under $\bT_i^{-1}$, for $1\leq i\leq n$.

By the identity \eqref{eq:cyc} and an induction on $i$, we can show that $\ZtUi$ is also invariant under $\bT_i$, for $1\leq i\leq n$. 
\end{proof}

We next show that the integral form $\ZtUi$ is the $\A$-span of the bases in Proposition \ref{prop:PBWb}.

\begin{cor}\label{cor:Abase}
    For any reduced expression $\mathbf{i}$ of the longest element $w_0$, the set
\begin{equation}\label{eq:PBWb1}
    \{B_{\mathbf{i}}(\mathbf{a})k(\mathbf{b})\mid \mathbf{a}\in\mathbb{N}^r,\mathbf{b}\in\mathbb{Z}^n\}
\end{equation}
gives an $\A$-linear basis of the integral form $\ZtUi$.
\end{cor}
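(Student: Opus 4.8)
The plan is to deduce the statement from Proposition~\ref{prop:PBWb}, which already gives that \eqref{eq:PBWb1} is a $\Qq$-basis of $\tUi$, together with Proposition~\ref{prop:invT} (invariance of $\ZtUi$ under the $\bT_i^{\pm1}$) and the known structure of the rescaled integral form $\ZtU$. Let $\A$-module $N$ be the $\A$-span of the set \eqref{eq:PBWb1} inside $\tUi$. There are two inclusions to establish: $N\subseteq\ZtUi$ and $\ZtUi\subseteq N$.

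For $N\subseteq\ZtUi$, I would argue that each basis element $B_{\mathbf i}(\mathbf a)k(\mathbf b)$ lies in $\ZtU$. The elements $k_i^{\pm1}=K_iK_i'^{\pm1}$ lie in $\ZtU$ since $K_i^{\pm1},K_i'^{\pm1}$ do. For the factor $B_{\mathbf i}(\mathbf a)$, first observe that each $B_i=F_i-q^{-1}E_iK_i'\in\ZtU$ (the monomial $E_iK_i'$ being an element of the rescaled PBW-basis up to a power of $q$, hence in $\ZtU$; one should double-check the scalar, but $-q^{-1}\in\A$). Since $\ZtU$ is a subalgebra, $B_i^{a}\in\ZtU$ for all $a\in\mathbb N$; and since $\ZtU$ is invariant under the braid symmetries $\oT_i$ of $\tU$ (this is \cite{Sh22}*{Theorem 5.13}, recalled in Section~\ref{sec:braid}), while the $\bT_{i_1}\cdots\bT_{i_{k-1}}$ in the definition \eqref{eq:PBW} of $B_{\mathbf i}(\mathbf a)$ agree with compositions of the $\oT_j^{\pm1}$ on $\tUi\subseteq\tU$ up to the intertwining by quasi $K$-matrices — more directly, by Proposition~\ref{prop:invT}, $\ZtUi=\tUi\cap\ZtU$ is itself stable under each $\bT_{i_j}$, so each twisted factor $\bT_{i_1}\cdots\bT_{i_{t-1}}(B_{i_t}^{a_t})$ lies in $\ZtUi\subseteq\ZtU$. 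Multiplying these factors (in $\ZtU$, a subalgebra) and then by $k(\mathbf b)\in\ZtU$ shows $B_{\mathbf i}(\mathbf a)k(\mathbf b)\in\tUi\cap\ZtU=\ZtUi$, so $N\subseteq\ZtUi$.

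For the reverse inclusion $\ZtUi\subseteq N$, take $x\in\ZtUi$. By Proposition~\ref{prop:PBWb} we can write $x=\sum_{\mathbf a,\mathbf b}c_{\mathbf a,\mathbf b}\,B_{\mathbf i}(\mathbf a)k(\mathbf b)$ with $c_{\mathbf a,\mathbf b}\in\Qq$, only finitely many nonzero; I must show every $c_{\mathbf a,\mathbf b}\in\A$. The natural route is to pass to the ambient algebra $\tU$ and use the triangularity Lemma~\ref{le:exp} together with the fact that $x\in\ZtU$ and that the rescaled PBW-basis \eqref{eq:PBW0} of $\tU$ is an $\A$-basis of $\ZtU$. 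Concretely, expand each $B_{\mathbf i}(\mathbf a)k(\mathbf b)$ in the PBW-basis \eqref{eq:PBW0} of $\tU$: by Lemma~\ref{le:exp}, $B_{\mathbf i}(\mathbf a)=F_{\mathbf i}(\mathbf a)+(\text{lower degree})$, and tracking the grading (as in the proof of Proposition~\ref{prop:PBWb}) one gets a "unitriangular with respect to a suitable partial order" change of basis between $\{B_{\mathbf i}(\mathbf a)k(\mathbf b)\}$ and a subset of $\{F_{\mathbf i}(\mathbf a')K(\mathbf c)K'(\mathbf d)E_{\mathbf i}(\mathbf b')\}$. The key point to verify is that this transition matrix has entries in $\A$ and is invertible over $\A$ (its "diagonal" entries are units in $\A$, in fact powers of $q^{1/2}$ coming from renormalisation/commutation, and the lower-order corrections are integral because all the ingredients — the $\oT_j^{\pm1}$, the relations \eqref{eq:R0}–\eqref{eq:R2}, the expansion $F_iE_i^{2n}=\cdots$ from Lemma~\ref{le:Klog} — are defined over $\A$). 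Granting that, $x\in\ZtU$ forces all $c_{\mathbf a,\mathbf b}\in\A$ by comparing coefficients in the $\A$-basis \eqref{eq:PBW0}, giving $x\in N$.

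The main obstacle is precisely the integrality and $\A$-invertibility of the transition matrix in the last paragraph: Lemma~\ref{le:exp} is stated only over $\Qq$ ("lower terms" being $\Qq$-linear combinations), so one must either refine that lemma to an integral statement or, alternatively, bypass it. A cleaner alternative that I would actually pursue is an induction on the filtration degree $n(\mathbf i,\mathbf a)$ used in the proof of Proposition~\ref{prop:PBWb}, combined with the observation that the $\A$-algebra $\ZtU$ meets $\tUi$ in an $\A$-subalgebra closed under the $\bT_i^{\pm1}$: one shows by induction that the leading term of $x$ with respect to this filtration has $\A$-coefficient (extracting it as a coefficient in the integral PBW-basis of $\tU^{<}$ via the integral triangular decomposition of $\ZtU$), subtracts an $\A$-multiple of the corresponding $B_{\mathbf i}(\mathbf a)k(\mathbf b)$ — which lies in $N\subseteq\ZtUi$ by the first part — and iterates. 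This reduces everything to the statement that $\ZtU$ possesses an integral triangular decomposition $\ZtU=\ZtU^{<}\otimes_\A\ZtU^0\otimes_\A\ZtU^{>}$ compatible with \eqref{eq:tria}, which follows from the form of the $\A$-basis \eqref{eq:PBW0}, and that $\ZtUi^0=\tUio\cap\ZtU$ is the $\A$-span of $\{k(\mathbf b)\}$, which is immediate.
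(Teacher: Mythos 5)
Your proposal is correct and follows essentially the same route as the paper: membership of the set \eqref{eq:PBWb1} in $\ZtUi$ via the integrality of the generators plus Proposition \ref{prop:invT}, and the reverse inclusion by comparing the $\Qq$-expansion against the $\A$-basis \eqref{eq:PBW0} of $\ZtU$ through the triangularity of Lemma \ref{le:exp}. The "obstacle" you flag is not a real gap: the subtract-and-iterate induction you describe as the cleaner alternative (which only needs the leading coefficients to be $1$ and the first inclusion $N\subseteq\ZtUi$, not integrality of the lower-order terms) is precisely how the paper's terse "it is easy to deduce" is meant to be unpacked.
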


\begin{proof}
    It is obvious that generators $B_i$, $k_i$, for $1\leq i\leq n$, belong to $\ZtUi$. Then by Proposition \ref{prop:invT}, elements in \eqref{eq:PBWb1} belong to $\ZtUi$. Thanks to Proposition \ref{prop:PBWb}, elements in \eqref{eq:PBWb1} are linearly independent. It will suffice to show that any element in $\ZtUi$ can be written as an $\A$-linear combination of elements in \eqref{eq:PBWb1}. 
    
    Take any element $x$ in $\ZtUi$. We expand $x=\sum_bc_bb$, where the summation is over elements in \eqref{eq:PBWb1}, and $c_b\in\Qq$. Since $x$ belongs to $\ZtU$, its expansion in terms of PBW-bases of $\tU$ (see \eqref{eq:PBW0}) has coefficients in $\A$. Together with Lemma \ref{le:exp}, it is easy to deduce that each $c_b$ also belongs to $\A$. This completes the proof.
\end{proof}

The following lemma gives a characterization for the integral form $\ZtUi$, which will be needed in Corollary \ref{cor:emq}.

\begin{lemma}\label{le:inti}
The integral form $\ZtUi$ is the smallest $\A$-subalgebra of $\tUi$, which satisfies the following conditions:

a) It contains generators $B_i$, $k_i$, for $1\leq i\leq n$;

b) It is invariant under automorphisms $\bT_i$, for $1\leq i\leq n$.
\end{lemma}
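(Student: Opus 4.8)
The plan is to show a double inclusion. Call $\mathcal{C}$ the smallest $\A$-subalgebra of $\tUi$ satisfying conditions (a) and (b). The inclusion $\mathcal{C}\subseteq\ZtUi$ is immediate: by Proposition \ref{prop:invT}, $\ZtUi$ is itself an $\A$-subalgebra of $\tUi$ invariant under all $\bT_i$, and it obviously contains the generators $B_i,k_i$; hence by minimality $\mathcal{C}\subseteq\ZtUi$. Note we only need invariance under $\bT_i$ (not $\bT_i^{-1}$) to place $\ZtUi$ among the competitors, which matches the weaker condition (b) in the statement.

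For the reverse inclusion $\ZtUi\subseteq\mathcal{C}$, I would use the PBW-basis description from Corollary \ref{cor:Abase}: fixing any reduced expression $\mathbf{i}=(i_1,\dots,i_r)$ of $w_0$, every element of $\ZtUi$ is an $\A$-linear combination of the monomials $B_{\mathbf{i}}(\mathbf{a})k(\mathbf{b})$ with $\mathbf{a}\in\mathbb{N}^r$, $\mathbf{b}\in\mathbb{Z}^n$. So it suffices to show each such $B_{\mathbf{i}}(\mathbf{a})k(\mathbf{b})$ lies in $\mathcal{C}$. The factor $k(\mathbf{b})=k_1^{b_1}\cdots k_n^{b_n}$ lies in $\mathcal{C}$ because $\mathcal{C}$ contains each $k_i$ and, being invariant under $\bT_i$, also contains $\bT_i(k_i)^{-1}=k_i$... more simply, $\mathcal{C}$ contains $k_i$ and, since $\bT_i(k_i)=k_i^{-1}$, it contains $k_i^{-1}$ as well; hence all $k(\mathbf{b})\in\mathcal{C}$. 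For $B_{\mathbf{i}}(\mathbf{a})$, recall its definition \eqref{eq:PBW}: it is a product of factors of the form $\bT_{i_1}\cdots\bT_{i_{t-1}}(B_{i_t}^{a_t})$. Each $B_{i_t}^{a_t}\in\mathcal{C}$ since $B_{i_t}\in\mathcal{C}$ and $\mathcal{C}$ is an algebra; applying the algebra automorphisms $\bT_{i_1},\dots,\bT_{i_{t-1}}$, under which $\mathcal{C}$ is closed by (b), keeps the result in $\mathcal{C}$; and a product of finitely many elements of $\mathcal{C}$ is in $\mathcal{C}$. Thus $B_{\mathbf{i}}(\mathbf{a})k(\mathbf{b})\in\mathcal{C}$, giving $\ZtUi\subseteq\mathcal{C}$.

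One point to be careful about: condition (b) only asserts invariance under the $\bT_i$ themselves, not their inverses, so in the previous paragraph I must only ever apply forward braid operators $\bT_{i}$ — which is exactly what the definition of $B_{\mathbf{i}}(\mathbf{a})$ uses — and I must obtain $k_i^{-1}$ from $\bT_i(k_i)=k_i^{-1}$ rather than from $\bT_i^{-1}$. This is the only mildly delicate step; everything else is formal. Combining the two inclusions yields $\ZtUi=\mathcal{C}$, which is the claim.

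\begin{proof}
Let $\mathcal{C}$ denote the smallest $\A$-subalgebra of $\tUi$ containing $B_i,k_i$ for $1\leq i\leq n$ and invariant under $\bT_i$ for $1\leq i\leq n$; such a subalgebra exists as the intersection of all $\A$-subalgebras with these properties.

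First, $\mathcal{C}\subseteq\ZtUi$. Indeed, $\ZtUi$ is an $\A$-subalgebra of $\tUi$ (Definition \ref{def:int}), it contains the generators $B_i,k_i$, and by Proposition \ref{prop:invT} it is invariant under each $\bT_i$. Hence $\ZtUi$ is one of the subalgebras over which the intersection defining $\mathcal{C}$ is taken, so $\mathcal{C}\subseteq\ZtUi$.

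Conversely, $\ZtUi\subseteq\mathcal{C}$. Fix a reduced expression $\mathbf{i}=(i_1,\dots,i_r)$ of $w_0$. By Corollary \ref{cor:Abase}, it suffices to show that $B_{\mathbf{i}}(\mathbf{a})k(\mathbf{b})\in\mathcal{C}$ for all $\mathbf{a}\in\mathbb{N}^r$ and $\mathbf{b}\in\mathbb{Z}^n$. Since $\mathcal{C}$ contains each $k_i$ and is invariant under $\bT_i$, and $\bT_i(k_i)=k_i^{-1}$ by \eqref{braid}, we get $k_i^{-1}\in\mathcal{C}$; as $\mathcal{C}$ is an algebra, $k(\mathbf{b})=k_1^{b_1}\cdots k_n^{b_n}\in\mathcal{C}$ for all $\mathbf{b}\in\mathbb{Z}^n$. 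For $B_{\mathbf{i}}(\mathbf{a})$, recall from \eqref{eq:PBW} that
\begin{equation*}
    B_{\mathbf{i}}(\mathbf{a})=B_{i_1}^{a_1}\,\bT_{i_1}(B_{i_2}^{a_2})\cdots \bT_{i_1}\cdots \bT_{i_{r-1}}(B_{i_r}^{a_r}).
\end{equation*}
For each $1\leq t\leq r$, the element $B_{i_t}^{a_t}$ lies in $\mathcal{C}$ since $B_{i_t}\in\mathcal{C}$ and $\mathcal{C}$ is closed under multiplication; applying the algebra automorphisms $\bT_{i_1},\dots,\bT_{i_{t-1}}$ in turn, under each of which $\mathcal{C}$ is invariant, we conclude $\bT_{i_1}\cdots\bT_{i_{t-1}}(B_{i_t}^{a_t})\in\mathcal{C}$. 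Therefore $B_{\mathbf{i}}(\mathbf{a})$, being a product of finitely many elements of $\mathcal{C}$, lies in $\mathcal{C}$, and hence so does $B_{\mathbf{i}}(\mathbf{a})k(\mathbf{b})$. This proves $\ZtUi\subseteq\mathcal{C}$, and combined with the previous inclusion we obtain $\ZtUi=\mathcal{C}$.
\end{proof}
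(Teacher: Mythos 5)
Your proof is correct and follows exactly the route the paper intends: the paper's own proof is the one-line citation of Proposition \ref{prop:invT} (for the inclusion $\mathcal{C}\subseteq{_\A\widetilde{\mathrm{U}}^\imath_n}$) and Corollary \ref{cor:Abase} (for the reverse inclusion via the PBW-basis), which is precisely the double inclusion you spell out. Your observation that only the forward operators $\bT_i$ are needed, with $k_i^{-1}$ obtained from $\bT_i(k_i)=k_i^{-1}$, is a correct and careful filling-in of the details the paper leaves implicit.
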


\begin{proof}
    It follows from Proposition \ref{prop:invT} and Corollary \ref{cor:Abase}.
\end{proof}

We next explain that integral forms and PBW-bases of the universal $\imath$quantum groups descend to integral forms and PBW-bases of the $\imath$quantum group, respectively.

Recall the $\Qq$-algebra $\dUi$ and the central reduction $\pi^\imath$ in Section \ref{sec:iqg}. Let us define 
\begin{equation}\label{eq:intd}
    \ZdUi=\pi^\imath(\ZtUi)
\end{equation}
to be the integral form of $\dUi$. Recall the integral form $\ZdU$ of the algebra $\dU$ in Section \ref{sec:braid}.

Next proposition asserts that $\ZdUi$ is induced from the (rescaled) integral form of Drinfeld--Jimbo quantum group. 

\begin{prop}\label{prop:inti}
    We have
    \begin{equation*}
        \ZdUi=\dUi\cap\ZdU.
    \end{equation*}
\end{prop}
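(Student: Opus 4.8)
The plan is to prove the two inclusions separately, using the PBW-bases on both sides and the relation between the central reduction maps $\pi$ and $\pi^\imath$. The inclusion $\ZdUi \subseteq \dUi \cap \ZdU$ is the easier direction: since $\ZtUi = \tUi \cap \ZtU$ by definition, and $\pi^\imath$ and $\pi$ agree on generators up to a twist, I would first make precise the statement (alluded to in Section \ref{sec:iqg}) that $\pi^\imath$ equals $\pi$ after conjugating by a suitable automorphism. Concretely, there is a $\Qq$-algebra automorphism $\phi$ of $\tU$ (rescaling $E_i$, $F_i$ by signs or powers of $q^{1/2}$ so that $B_i = F_i - q^{-1}E_iK_i'$ is sent to $\phi(F_i) + q^{-1}\phi(E_i)\phi(K_i')$ with $k_i \mapsto -1$ matching) so that $\pi^\imath = \pi\circ\phi|_{\tUi}$. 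One checks that $\phi$ preserves $\ZtU$ (it only rescales PBW generators by units of $\A$, hence permutes the rescaled PBW-basis up to $\A^\times$), so $\pi^\imath(\ZtUi) = \pi(\phi(\tUi \cap \ZtU)) \subseteq \pi(\tU \cap \ZtU)\cap \pi(\phi(\tUi)) = \dUi \cap \ZdU$, using that $\pi(\ZtU) = \ZdU$ and $\phi(\tUi)$ maps onto $\dUi$.

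For the reverse inclusion $\dUi \cap \ZdU \subseteq \ZdUi$, I would argue with PBW-bases. By Corollary \ref{cor:Abase}, $\ZdUi = \pi^\imath(\ZtUi)$ has $\A$-basis $\{B_{\mathbf i}(\mathbf a)\mid \mathbf a\in\mathbb N^r\}$ (after killing $k_i + 1$), and by Section \ref{sec:braid} the algebra $\ZdU$ has $\A$-basis $\{F_{\mathbf i}(\mathbf a)K(\mathbf c)E_{\mathbf i}(\mathbf b)\}$. Take $x \in \dUi \cap \ZdU$. Write $x = \sum_{\mathbf a} c_{\mathbf a} B_{\mathbf i}(\mathbf a)$ with $c_{\mathbf a}\in\Qq$ — this is possible since $\{B_{\mathbf i}(\mathbf a)\}$ is a $\Qq$-basis of $\dUi$ (Proposition \ref{prop:PBWb} after central reduction). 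I want to show every $c_{\mathbf a}\in\A$. By Lemma \ref{le:exp} (its image under $\pi^\imath$), $B_{\mathbf i}(\mathbf a) = F_{\mathbf i}(\mathbf a) + (\text{lower terms})$, where "lower" refers to the $Q$-grading degree of the $F$-part after triangular decomposition. Order the indices $\mathbf a$ by $|\mathbf a|$ or by the grading of $F_{\mathbf i}(\mathbf a)$; take a maximal $\mathbf a_0$ with $c_{\mathbf a_0}\neq 0$. Expanding $x$ in the $\ZdU$-PBW-basis, the coefficient of the top term $F_{\mathbf i}(\mathbf a_0)$ (which appears only from $B_{\mathbf i}(\mathbf a_0)$ and from no lower $B_{\mathbf i}(\mathbf a)$ because of the triangularity) equals $c_{\mathbf a_0}$, hence $c_{\mathbf a_0}\in\A$. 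Subtract $c_{\mathbf a_0}B_{\mathbf i}(\mathbf a_0) \in \ZdUi \subseteq \dUi\cap\ZdU$ and induct downward.

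The main obstacle I anticipate is making the triangularity argument genuinely rigorous: one needs that, when both $x$ and the subtracted terms are expanded in the $\ZdU$-PBW-basis, the "top" $F$-monomials of distinct $B_{\mathbf i}(\mathbf a)$ are distinct and do not interfere, and that "lower terms" in Lemma \ref{le:exp} never produce a contribution at the degree of $F_{\mathbf i}(\mathbf a_0)$. This requires choosing the right partial order — the $Q$-grading of $\tU$ restricted to the $\tU^{<}$-component under triangular decomposition — and checking that $B_{\mathbf i}(\mathbf a)$ for $\mathbf a \neq \mathbf a_0$ with comparable or larger "size" cannot have $F_{\mathbf i}(\mathbf a_0)$ in its support; this is exactly the content of the linear-independence argument in Proposition \ref{prop:PBWb}, so it should go through. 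A secondary technical point is pinning down the twist automorphism $\phi$ and verifying $\pi^\imath = \pi\circ\phi$ together with $\phi(\ZtU) = \ZtU$; this is the computation referred to in the remark "They only coincide after a twist on generators (see the proof of Proposition \ref{prop:inti})," and amounts to a direct check on generators plus the observation that $\ZtU$ is independent of the reduced expression and stable under rescaling PBW generators by units.
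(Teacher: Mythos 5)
There is a genuine gap in the first half, and it propagates into the second. The twist automorphism $\phi$ you posit cannot be realized by ``rescaling $E_i$, $F_i$ by signs or powers of $q^{1/2}$'': if $\phi$ sends $E_i\mapsto aE_i$, $F_i\mapsto bF_i$, $K_i\mapsto cK_i$, $K_i'\mapsto dK_i'$, then preserving the relation $[E_i,F_i]=(q-q^{-1})(K_i'-K_i)$ forces $ab=c=d$, while the requirement $\pi(\phi(k_i))=-1$ forces $cd=-1$, hence $c^2=-1$. So no such automorphism exists over $\Qq$; the needed scalar is $\sqrt{-1}$. This is exactly why the paper's proof first extends scalars to $\mathbb{F}'=\Qq(\sqrt{-1})$ and $\A[\sqrt{-1}]$, defines $\Phi$ by $E_i\mapsto\sqrt{-1}E_i$, $K_i\mapsto\sqrt{-1}K_i$, $K_i'\mapsto\sqrt{-1}K_i'$, proves the identity $_{\A[\sqrt{-1}]}\dUi={_{\mathbb{F}'}\dUi}\cap{_{\A[\sqrt{-1}]}\dU}$ there, and then descends to $\A$ by splitting each $\A[\sqrt{-1}]$-module as $M\oplus\sqrt{-1}M$. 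Your proposal is missing both the field extension and the descent step, and without them the first inclusion does not go through as written.

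The second half has a related problem: you invoke ``Lemma \ref{le:exp}, its image under $\pi^\imath$,'' but Lemma \ref{le:exp} is an identity in $\tU$ whose lower-order terms do not lie in $\tUi$, and $\pi^\imath$ is defined only on $\tUi$ and is \emph{not} the restriction of $\pi$ (as the paper warns). So one cannot push that expansion into $\dU$ without first knowing $\pi^\imath=\pi\circ\Phi|_{\tUi}$ --- which again requires $\Phi$ and hence $\sqrt{-1}$. Once the twist is correctly in place, your downward-induction on leading PBW terms could be repaired, but it becomes superfluous: since $\ZtUi=\tUi\cap\ZtU$ by Definition \ref{def:int} and $\Phi$ permutes the rescaled PBW-basis of $_{\A[\sqrt{-1}]}\tU$ up to units, both inclusions fall out of the single identity $\pi^\imath=\pi\circ\Phi|$ at once, which is the route the paper takes.
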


\begin{proof}
    Set $\mathbb{F}'=\Qq(\sqrt{-1})$ to be the field extension of $\Qq$ by joining the square root of $-1$, and set
    \[
    \A[\sqrt{-1}]=\{a+b\sqrt{-1}\mid a,b\in\A\}\subset \mathbb{F}'.
    \]

    We denote by $_{\mathbb{F}'}\tU$, $_{\mathbb{F}'}\dU$, $_{\mathbb{F}'}\tUi$, $_{\mathbb{F}'}\dUi$ to be the $\mathbb{F}'$-algebras, obtained by the field extensions of the corresponding $\Qq$-algebras. Similarly set $_{\A[\sqrt{-1}]}\tU$, $_{\A[\sqrt{-1}]}\dU$, $_{\A[\sqrt{-1}]}\tUi$, $_{\A[\sqrt{-1}]}\dUi$ to be the $\A[\sqrt{-1}]$-algebras, obtained by base change of the corresponding $\A$-algebras.
    
    Let $\Phi$ be the $\mathbb{F}'$-algebra automorphism of $_{\mathbb{F}'}\tU$, given by
    \[
    F_i\mapsto F_i,\quad E_i\mapsto\sqrt{-1}E_i,\quad K_i\mapsto \sqrt{-1}K_i,\quad K_i'\mapsto \sqrt{-1}K_i',\quad \text{for }1\leq i\leq n.
    \]

    Then the central reduction $\pi^\imath$ coincides with the restriction of $\pi\circ\Phi$ to the subalgebra $_{\mathbb{F}'}\tUi$ (cf. \cite{WZ22}*{Remark 2.8}). Here by abuse of notation, $\pi$ and $\pi^\imath$ denote the induced maps after the field extension. Since $\Phi$ preserves the integral form $_{\A[\sqrt{-1}]}\tU$, it follows that
    \begin{equation}\label{eq:fi}
        _{\A[\sqrt{-1}]}\dUi={_\mathbb{F}\dUi}\cap{_{\A[\sqrt{-1}]}\dU}.
    \end{equation}

    Let us decompose both sides of \eqref{eq:fi} as $\A$-modules
    \[
    _{\A[\sqrt{-1}]}\dUi={\ZdUi}\oplus \sqrt{-1}\ZdUi,
    \]
    and
    \[
    {_\mathbb{F}\dUi}\cap{_{\A[\sqrt{-1}]}\dU}=(\dUi\cap\ZdU)\oplus \sqrt{-1}(\dUi\cap\ZdU).
    \]
    Then the proposition follows easily.
\end{proof}

Recall that braid group symmetries $\{\bT_i\mid 1\leq i\leq n\}$ descend to the algebra $\dUi$.

\begin{cor}\label{cor:iau}
    The integral form $\ZdUi$ is invariant under braid group symmetries $T_i$, for $1\leq i\leq n$. Moreover, for any reduced expression $\mathbf{i}$ of $w_0$, the set
    \begin{equation*}
    \{B_{\mathbf{i}}(\mathbf{a})\mid \mathbf{a}\in\mathbb{N}^r\}
\end{equation*}   
    forms an $\A$-linear basis of $\ZdUi$.
\end{cor}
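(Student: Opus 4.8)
The plan is to deduce Corollary \ref{cor:iau} from the corresponding statements about $\tUi$ via the central reduction $\pi^\imath$. First I would establish the invariance of $\ZdUi$ under $\bT_i$: by Proposition \ref{prop:invT} we know $\ZtUi$ is invariant under $\bT_i$ and $\bT_i^{-1}$, and since $\bT_i$ descends to an automorphism of $\dUi$ compatible with $\pi^\imath$ (i.e. the square $\pi^\imath\circ\bT_i=\bT_i\circ\pi^\imath$ commutes, which is clear on generators), we get $\bT_i(\ZdUi)=\bT_i(\pi^\imath(\ZtUi))=\pi^\imath(\bT_i(\ZtUi))=\pi^\imath(\ZtUi)=\ZdUi$. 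So the first assertion is essentially immediate.

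For the PBW-basis claim, the strategy is to transport the basis of Corollary \ref{cor:Abase} through $\pi^\imath$. Recall $\pi^\imath$ sends $B_{\mathbf{i}}(\mathbf{a})k(\mathbf{b})$ to $(-1)^{|\mathbf{b}|}B_{\mathbf{i}}(\mathbf{a})$ (using $k_i\mapsto -1$ and the compatibility of $\pi^\imath$ with the $\bT_j$'s, so that the image of $B_{\mathbf{i}}(\mathbf{a})$ defined via the $\bT_j$'s in $\tUi$ is the element denoted the same way in $\dUi$). Hence $\pi^\imath$ maps the $\A$-basis $\{B_{\mathbf{i}}(\mathbf{a})k(\mathbf{b})\mid \mathbf{a}\in\mathbb{N}^r,\mathbf{b}\in\mathbb{Z}^n\}$ of $\ZtUi$ onto the set $\{B_{\mathbf{i}}(\mathbf{a})\mid \mathbf{a}\in\mathbb{N}^r\}$, with each target element hit (up to sign) by the basis elements with $\mathbf{b}=\mathbf{0}$. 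Since $\ZdUi=\pi^\imath(\ZtUi)$ by definition \eqref{eq:intd}, it follows that $\{B_{\mathbf{i}}(\mathbf{a})\mid\mathbf{a}\in\mathbb{N}^r\}$ spans $\ZdUi$ over $\A$. Linear independence over $\A$ follows from linear independence over $\Qq$, which is the PBW-basis statement for $\dUi$ recorded right after Proposition \ref{prop:PBWb} (itself a consequence of $\ker\pi^\imath$ being generated by $k_i+1$ together with Proposition \ref{prop:PBWb}).

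One point that needs a small argument is that the element of $\dUi$ written $B_{\mathbf{i}}(\mathbf{a})$ in Section \ref{sec:pbw} — built from the $\bT_j$ acting on $\dUi$ — genuinely equals $\pi^\imath(B_{\mathbf{i}}(\mathbf{a}))$ for the homonymous element of $\tUi$; this is exactly the compatibility $\pi^\imath\circ\bT_j=\bT_j\circ\pi^\imath$ applied iteratively, which holds since both sides agree on the generators $B_i$, $k_i$ and $\pi^\imath$ is surjective. I expect this bookkeeping, rather than any genuine difficulty, to be the only place requiring care. Everything else is a formal consequence of Corollary \ref{cor:Abase}, Proposition \ref{prop:invT}, the definition \eqref{eq:intd}, and the already-noted fact that $\{B_{\mathbf{i}}(\mathbf{a})\}$ is a $\Qq$-basis of $\dUi$.
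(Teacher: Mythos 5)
Your proposal is correct and follows exactly the route the paper intends (the paper leaves the proof of Corollary \ref{cor:iau} implicit): both assertions are pushed down from $\ZtUi$ through the central reduction $\pi^\imath$, using Proposition \ref{prop:invT}, Corollary \ref{cor:Abase}, the definition \eqref{eq:intd}, and the compatibility $\pi^\imath\circ\bT_i=\bT_i\circ\pi^\imath$. The bookkeeping you flag (that $\pi^\imath$ sends $B_{\mathbf{i}}(\mathbf{a})k(\mathbf{b})$ to $\pm B_{\mathbf{i}}(\mathbf{a})$ and that linear independence descends from the $\Qq$-basis statement recorded after Proposition \ref{prop:PBWb}) is handled correctly.
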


\section{Cluster realisations of $\imath$quantum groups}\label{sec:5}

In this section, we give cluster realisations of $\imath$quantum groups, and give cluster interpretations of fundamental constructions. Retain the same notations as in the previous sections.

\subsection{The quiver $\Sigma_n$}\label{sec:quiv}

Let us firstly introduce the related quivers of our cluster realisations. Recall the quiver $\widetilde{\Sigma_n}$ associated with the $n$-triangulation of a triangle $ABC$, in Section \ref{sec:clq}. We label the vertices of $\widetilde{\Sigma_n}$ on the edge $AB$ by $1',2',\dots,n'$ in the direction $\overrightarrow{BA}$, and label the edge $AC$ by $1'',2'',\dots,n''$ in the direction $\overrightarrow{AC}$. The quiver $\Sigma_n$ is obtained by the self-amalgamation (see Section \ref{sec:defcl}) of $\widetilde{\Sigma_n}$, by identifying frozen vertices $i'$ with $i''$, for $1\leq i\leq n$. 

\begin{figure}[h]
    \centering
    \begin{tikzpicture}[every node/.style={inner sep=0, minimum size=0.4cm, thick, draw, fill=white}, thick, x=0.9cm, y=1.2cm]
        \node (1) at (3,0.5) [draw] {$\tiny{2}$};
        \node (2) at (1,0.5) [draw] {$\tiny{1'}$};
        \node (3) at (2,-0.5) [draw] {$\tiny{1''}$};

        \begin{scope}[>=latex]
            \qarrow{1}{2};
            \qarrow{2}{3};
            \qarrow{3}{1};
        \end{scope}

        \draw [shade, top color = gray!35, shading angle=-90, thick] (4,-0.2) to (4,0.2) to (5.7,0.2) to (5.7,0.3) to (6.2,0) to (5.7,-0.3) to (5.7,-0.2) to (4,-0.2);
        \node [draw=none, fill=none] at (5,0) {$\tiny{1'\leftrightarrow1''}$};

        \node (1) at (7.3,0) [draw, circle] {$\tiny{1}$};
        \node (2) at (9,0) [draw] {$\tiny{2}$};
    \end{tikzpicture}
    \caption{$\widetilde{\Sigma_1}$-quiver (left) and $\Sigma_1$-quiver (right)}
    \label{fig:s1}
\end{figure}

\begin{figure}[h]
\centering
\begin{tikzpicture}[every node/.style={inner sep=0, minimum size=0.4cm, thick, draw, fill=white}, thick, x=0.9cm, y=1.2cm]

\node (1) at (1.5,-1) [draw] {$\tiny{1''}$};
\node (2) at (3.5,-1) [draw] {$\tiny{2''}$};
\node (3) at (0.5,0) [draw] {$\tiny{2'}$};
\node (4) at (2.5,0) [draw,circle] {$\tiny{3}$};
\node (5) at (4.5,0) [draw] {$\tiny{4}$};
\node (6) at (1.5,1) [draw] {$\tiny{1'}$};
\node (7) at (3.5,1) [draw] {$\tiny{5}$};

\begin{scope}[>=latex]
\qarrow{7}{6};
\qdarrow{7}{5};
\qarrow{4}{7};
\qarrow{6}{4};
\qarrow{4}{3};
\qdarrow{3}{6};
\qarrow{5}{4};
\qarrow{3}{1};
\qarrow{1}{4};
\qarrow{4}{2};
\qarrow{2}{5};
\qdarrow{2}{1};
\end{scope}
    \draw [shade, top color = gray!35, shading angle=-90, thick] (5.5,-0.4) to (5.5,0.4) to (7.3,0.4) to (7.3,0.5) to (7.8,0) to (7.3,-0.5) to (7.3,-0.4) to (5.5,-0.4);

\node [draw=none, fill=none] at (6.5,0.2) {$\tiny{1'\leftrightarrow 1''}$};
\node [draw=none, fill=none] at (6.5,-0.2) {$\tiny{2'\leftrightarrow 2''}$};

    \node (1) at (12,0.8) [draw] {$\tiny{5}$};
    \node (2) at (9,1) [draw, circle] {$\tiny{1}$};
    \node (3) at (10.5,0) [draw, circle] {$\tiny{3}$};
    \node (4) at (12,-0.8) [draw] {$\tiny{4}$};
    \node (5) at (9,-1) [draw, circle] {$\tiny{2}$};

    \begin{scope}[>=latex]
        \qdarrow{1}{4};
        \qarrow{1}{2};
        \qarrow{3}{1};
        \qarrow{4}{3};
        \qarrow{5}{4};
        \qddarrow{2}{3};
        \qddarrow{3}{5};
        \qddarrow{5}{2};
    \end{scope}
\end{tikzpicture}
\caption{$\widetilde{\Sigma_2}$-quiver (left) and $\Sigma_2$-quiver (right)}
\label{fig:s2}
\end{figure}

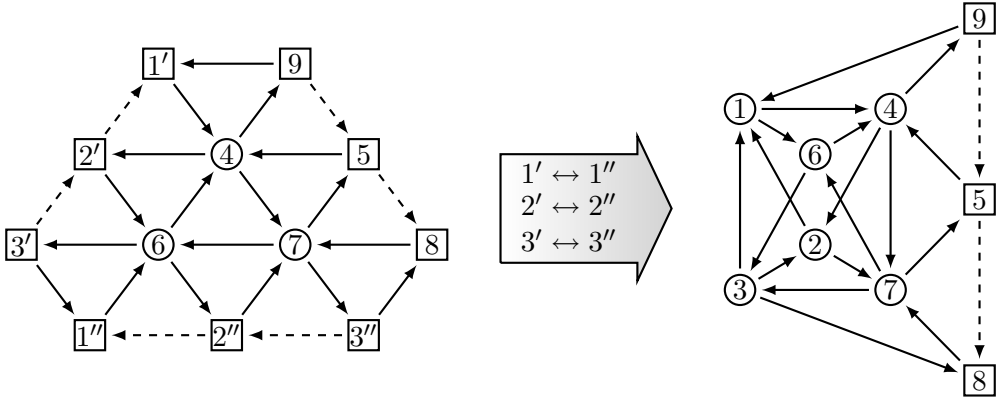
\begin{figure}[h] 
    \centering
    \begin{tikzpicture}[every node/.style={inner sep=0, minimum size=0.4cm, thick, draw, fill=white}, thick, x=0.9cm, y=1.2cm]
    \node (1) at (5,3) [draw] {$\tiny{9}$};
    \node (2) at (3,3) [draw] {$\tiny{1'}$};
    \node (5) at (6,2) [draw] {$\tiny{5}$};
    \node (4) at (4,2) [draw, circle] {$\tiny{4}$};
    \node (6) at (2,2) [draw] {$\tiny{2'}$};
    \node (8) at (7,1) [draw] {$\tiny{8}$};
    \node (7) at (5,1) [draw, circle] {$\tiny{7}$};
    \node (3) at (3,1) [draw, circle] {$\tiny{6}$};
    \node (9) at (1,1) [draw] {$\tiny{3'}$};
    \node (10) at (2,0) [draw] {$\tiny{1''}$};
    \node (11) at (4,0) [draw] {$\tiny{2''}$};
    \node (12) at (6,0) [draw] {$\tiny{3''}$};

    \begin{scope}[>=latex]
        \qdarrow{1}{5};
        \qdarrow{5}{8};
        \qdarrow{12}{11};
        \qdarrow{11}{10};
        \qdarrow{9}{6};
        \qdarrow{6}{2};
        \qarrow{1}{2};
        \qarrow{5}{4};
        \qarrow{4}{6};
        \qarrow{8}{7};
        \qarrow{7}{3};
        \qarrow{3}{9};
        \qarrow{2}{4};
        \qarrow{4}{7};
        \qarrow{7}{12};
        \qarrow{6}{3};
        \qarrow{3}{11};
        \qarrow{9}{10};
        \qarrow{12}{8};
        \qarrow{11}{7};
        \qarrow{7}{5};
        \qarrow{10}{3};
        \qarrow{3}{4};
        \qarrow{4}{1};
    \end{scope}

    \draw [shade, top color = gray!35, shading angle=-90, thick] (8,0.8) to (8,2) to (10,2) to (10,2.2) to (10.5,1.4) to (10,0.6) to (10,0.8) to (8,0.8);
    \node [draw=none, fill=none] at (9,1.8) {$1'\leftrightarrow1''$};
    \node [draw=none, fill=none] at (9,1.45) {$2'\leftrightarrow 2''$};
    \node [draw=none, fill=none] at (9,1.05) {$3'\leftrightarrow 3''$};

    \node (9) at (11.5, 0.5) [draw, circle] {$\tiny{3}$};
    \node (2) at (11.5, 2.5) [draw, circle] {$\tiny{1}$};
    \node (4) at (13.7, 2.5) [draw, circle] {$\tiny{4}$};
    \node (7) at (13.7, 0.5) [draw, circle] {$\tiny{7}$};
    \node (6) at (12.6, 1) [draw, circle] {$\tiny{2}$};
    \node (3) at (12.6, 2) [draw, circle] {$\tiny{6}$};
    \node (1) at (15, 3.5) [draw] {$\tiny{9}$};
    \node (5) at (15, 1.5) [draw] {$\tiny{5}$};
    \node (8) at (15, -0.5) [draw] {$\tiny{8}$};

    \begin{scope}[>=latex]
        \qdarrow{1}{5};
        \qdarrow{5}{8};
        \qarrow{1}{2};
        \qarrow{2}{3};
        \qarrow{3}{4};
        \qarrow{4}{1};
        \qarrow{5}{4};
        \qarrow{4}{6};
        \qarrow{6}{7};
        \qarrow{7}{5};
        \qarrow{8}{7};
        \qarrow{7}{3};
        \qarrow{3}{9};
        \qarrow{9}{8};
        \qarrow{2}{4};
        \qarrow{4}{7};
        \qarrow{7}{9};
        \qarrow{9}{2};
        \qarrow{6}{2};
        \qarrow{9}{6};
    \end{scope}
    
    \end{tikzpicture}
    \caption{$\widetilde{\Sigma_3}$-quiver (left) and $\Sigma_3$-quiver (right)}
    \label{fig:s3}
\end{figure}

Let us introduce a labelling of vertices of the quiver $\Sigma_n$. Note that all the frozen vertices are on the edge $BC$. We denote these vertices by $X_{i,0}$ for $i=1,\dots,n$, counting from $B$ to $C$. Choose a frozen vertex $X_{i,0}$ and follow the arrows in the direction $\overrightarrow{CA}$. Then we will hit a vertex on the edge $AB$ which is identified with a vertex on $AC$ by the self-amalgamation. We continue to follow the arrows in the direction $\overrightarrow{AB}$ and will get back to the frozen vertex $X_{i,0}$. In this way we get a loop with the base point $X_{i,0}$. We label the vertices along the loop by $X_{i,t}$, $t=0,1,\dots,n$. It is easy to verify the following relation
\begin{equation}
    X_{j,j-i}=X_{i,n+i-j+1},\qquad 1\leq i<j\leq n.\label{eq:relabel}
\end{equation}
By abuse of notation, we also use $X_{i,j}$ to denote corresponding generators of the associated quantum torus algebra $\T_{\Sigma_n}$.

\begin{example}
    In the Figures \ref{fig:s1}, \ref{fig:s2} and \ref{fig:s3}, let us refer to the $i$-th vertex by $X_i$. Then in the $\Sigma_1$-quiver (see Figure \ref{fig:s1}), our labelling is
    \begin{equation*}
        X_{1,0}=X_2,\quad X_{1,1}=X_1.
    \end{equation*}
    In the $\Sigma_2$-quiver (see Figure \ref{fig:s2}), our labelling is 
    \begin{equation*}
        \begin{array}{ccc}
          X_{1,0}=X_5,\quad & X_{1,1}=X_1, \quad & X_{1,2}=X_3,\\
        X_{2,0}=X_4, \quad & X_{2,1}=X_3, \quad & X_{2,2}=X_2.
        \end{array}
    \end{equation*}
    In the $\Sigma_3$-quiver (see Figure \ref{fig:s3}), our labelling is
    \begin{equation*}
        \begin{array}{cccc}
        X_{1,0}=X_9, \quad & X_{1,1}=X_1, \quad & X_{1,2}=X_6, \quad & X_{1,3}=X_4,\\
        X_{2,0}=X_5, \quad & X_{2,1}=X_4, \quad & X_{2,2}=X_2, \quad & X_{2,3}=X_7,\\
        X_{3,0}=X_8, \quad & X_{3,1}=X_7, \quad & X_{3,2}=X_6, \quad & X_{3,3}=X_3.
    \end{array}
    \end{equation*}
\end{example}

\subsection{An algebra embedding}\label{sec:albe}

Recall in Section \ref{sec:defcl}, that the quantum cluster algebra $\mathcal{O}_q(\mathcal{X}_{|\Sigma_n|})$ is an $\A$-subalgebra of the quantum torus algebra $\T_{\Sigma_n}$. Set $$\FOX=\Qq\otimes_\A\mathcal{O}_q(\mathcal{X}_{|\Sigma_n|})$$ to be the base change of the quantum cluster algebra. Recall the notation $\ren{M}$ for the renormalized monomial of $M$ in Section \ref{sec:defcl}.

The following is the main theorem in this section.

\begin{theorem}\label{thm:emb}
    There is a $\Qq$-algebra embedding $$\iota:\tUi\hookrightarrow\FOX,$$ given by
    \begin{align}
        &B_i\mapsto \sum_{k=0}^n:X_{i,0}X_{i,1}\cdots X_{i,k}:,\label{eq:em1}\\
        &k_i\mapsto -\ren{X_{i,0}^2X_{i,1}\cdots X_{i,n}},\label{eq:em2}
    \end{align} 
     for $1\leq i\leq n$.
\end{theorem}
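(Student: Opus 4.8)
The plan is to verify that the proposed assignment $\iota$ is a well-defined $\Qq$-algebra homomorphism and then establish injectivity. For well-definedness, I first need to check that the elements $\sum_{k=0}^n \ren{X_{i,0}X_{i,1}\cdots X_{i,k}}$ and $-\ren{X_{i,0}^2X_{i,1}\cdots X_{i,n}}$ actually lie in the quantum cluster algebra $\FOX$, not just in $\T_{\Sigma_n}$ or its fraction field. The cleanest way to do this is to exploit the known cluster realisation $t:\tU\hookrightarrow {_{\mathbb{F}}\mathcal{O}_q(\mathcal{X}_{|\qD_n|})}$ from Theorem \ref{thm:tqg}, together with the fact that $\qD_n$ is the amalgamation of two copies of $\widetilde{\Sigma_n}$ and $\Sigma_n$ is the self-amalgamation of one copy. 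Concretely, I would construct an algebra map $\T_{\Sigma_n}\to\T_{\qD_n}$ (or rather relate the two torus algebras via the $\widetilde{\Sigma_n}$-building block and the amalgamation embedding \eqref{eq:aml}), so that the image of $\iota$ is compatible with $t$ composed with the coideal inclusion $\tUi\hookrightarrow\tU$; under this comparison, membership in the cluster algebra is transported from Theorem \ref{thm:tqg}.

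Second, and this is the technical heart, I would check that the defining relations \eqref{eq:R0}, \eqref{eq:R1}, \eqref{eq:R2} of $\tUi$ (from the presentation) are satisfied by the proposed images. The relations \eqref{eq:R0}–\eqref{eq:R1} involving $k_i$ commuting with everything and $[B_i,B_j]=0$ for $|i-j|>1$ should reduce to a computation of the exchange matrix entries $\varepsilon$ of $\Sigma_n$ between the relevant vertices $X_{i,t}$ and $X_{j,s}$: when $|i-j|>1$, the corresponding loops in the quiver are disjoint or non-adjacent, so the renormalized monomials commute, and similarly $X_{i,0}^2X_{i,1}\cdots X_{i,n}$ is (up to scalar) a ``central-type'' monomial whose commutation with the $B_j$-monomials can be read off from the quiver. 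The Serre-type relation \eqref{eq:R2} for $|i-j|=1$ is the one that genuinely uses the $q$-commutation structure encoded by the arrows between the loop of $X_i$ and the loop of $X_j$; I expect this to be the main obstacle, and I would handle it by first working out the commutation relations $X_{i,t}X_{j,s}=q^{2c}X_{j,s}X_{i,t}$ explicitly from the self-amalgamated quiver (using the relabelling \eqref{eq:relabel}), then expanding both $B_iB_jB_i$-type products as sums of renormalized monomials and matching coefficients. A useful reduction is that these relations only involve rank-one and rank-two sub-$\imath$quantum-groups, so it suffices to verify everything for $n=1$ and $n=2$, i.e. for the quivers $\Sigma_1$ and $\Sigma_2$ depicted in Figures \ref{fig:s1} and \ref{fig:s2}; the general case follows because any pair $i,j$ sits inside such a sub-quiver.

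Third, for injectivity I would again lean on Theorem \ref{thm:tqg}. The inclusion $\tUi\hookrightarrow\tU$ is injective by definition, and $t$ is an embedding; if I can show that $t$ restricted to $\tUi$ factors through $\iota$ via a suitable (injective) algebra map $\FOX\to {_{\mathbb{F}}\mathcal{O}_q(\mathcal{X}_{|\qD_n|})}$ induced by the amalgamation of $\Sigma_n$ with a mirror copy of $\widetilde{\Sigma_n}$ (matching the way $\qD_n$ glues two triangles), then injectivity of $\iota$ follows from injectivity of $t\circ(\text{inclusion})$. Concretely, the generators $B_i = F_i - q^{-1}E_iK_i'$ and $k_i = K_iK_i'$ of $\tUi$ have images under $t$ that, after the amalgamation identification, collapse exactly to the formulas \eqref{eq:em1}–\eqref{eq:em2}; comparing the $E$, $F$, $K$, $K'$ formulas in Theorem \ref{thm:tqg} with the loop-labelling of $\Sigma_n$ makes this a bookkeeping check on vertex labels (the $V$-labels and $\Lambda$-labels of $\qD_n$ get identified under self-amalgamation with the $X_{i,t}$-labels).

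Alternatively, if the factorisation through $t$ turns out to be awkward to set up cleanly, injectivity can be proved directly: the images $\iota(B_i)$ have a leading term (a single renormalized monomial, say $\ren{X_{i,0}\cdots X_{i,n}}$ or $\ren{X_{i,0}}$ depending on the grading convention) with respect to a suitable $\mathbb{Z}^{v(\Sigma_n)}$-grading on $\T_{\Sigma_n}$, and these leading terms, together with the $\iota(k_i)$, generate a ``large enough'' subalgebra; combined with the PBW-basis of $\tUi$ from Proposition \ref{prop:PBWb}, one shows that the images of the PBW monomials $B_{\mathbf{i}}(\mathbf{a})k(\mathbf{b})$ have distinct leading terms and hence are linearly independent in $\FOX$. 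I expect the relation-checking in the second step (especially \eqref{eq:R2}) to be where all the real computation lives; the rest is organising the correspondence with the Schrader--Shapiro picture.
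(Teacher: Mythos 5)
Your overall skeleton (check the relations, check Laurentness, prove injectivity) matches the paper, and your fallback injectivity argument via distinct leading terms of the images of the PBW monomials $B_{\mathbf{i}}(\mathbf{a})k(\mathbf{b})$ is exactly what the paper does in Proposition \ref{prop:inj}. But two of your main steps have genuine gaps. First, the claimed reduction of the Serre-type relation \eqref{eq:R2} to the cases $n=1,2$ fails: unlike the quantum group setting, the image $\iota(B_i)=\sum_{k=0}^n P_{i,k}$ involves the \emph{entire} loop $X_{i,0},\dots,X_{i,n}$ of length $n+1$ in $\Sigma_n$, so the subquiver supporting the pair $B_i,B_{i\pm 1}$ grows with $n$ and is not isomorphic to $\Sigma_2$. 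The paper's verification of \eqref{eq:R2} is an intrinsically $n$-dependent computation over all triples $0\le t,l,m\le n$, with special cancellations at the indices $(n-1,1)$ and $(n,0)$ coming from the self-amalgamation identification $X_{i,n}=X_{i+1,1}$; none of this is visible in rank $2$. Second, the proposed transport of cluster-algebra membership and of injectivity through the map $t$ of Theorem \ref{thm:tqg} does not get off the ground: $\iota$ lands in $\T_{\Sigma_n}$ and $t|_{\tUi}$ lands in $\T_{\qD_n}$, and these are tori of different quivers with different unfrozen parts ($\qD_n$ is two triangles glued along edges, $\Sigma_n$ is one triangle self-glued), so there is no algebra map between them through which $t|_{\tUi}$ factors via $\iota$; the only relation the paper establishes between the two realisations is the coideal map $(\iota\otimes t)\circ\Delta$ into the tensor product $\T_{\Sigma_n}\otimes\T_{\qD_n}$ (Proposition \ref{prop:cop}), which cannot be used to deduce Laurentness of $\iota(B_i)$ under mutations of $\Sigma_n$ from Laurentness of $t(E_i),t(F_i)$ under mutations of $\qD_n$.

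What the paper actually does for Laurentness (Proposition \ref{prop:qcl}) is quite different and you would need some version of it: the central monomials $\iota(k_i)$ are handled by a general fact about central monomials, and for $\iota(B_i)$ one invokes the one-step mutation criterion of Berenstein--Zelevinsky, transported to quantum cluster $\mathcal{X}$-algebras via the Goncharov--Shen framing $\Sigma_n'$ and the cluster ensemble map, and then checks by direct computation that $\mu(\iota(B_i))$ is Laurent for every single mutation $\mu$ of $\Sigma_n$. Without either this one-step criterion or an explicit expression of $\iota(B_i)$ in distant clusters, your argument does not establish that the image lies in $\FOX$ rather than merely in the quantum torus $\T_{\Sigma_n}$.
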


\begin{remark}
    One can precisely write down the renormalized monomials as follows. For $n>1$ and $0\leq k\leq n$, we have
    $$:X_{i,0}X_{i,1}\cdots X_{i,k}:=
        q^kX_{i,0}X_{i,1}\cdots X_{i,k}, \quad 
    :X_{i,0}^2X_{i,1}\cdots X_{i,n}:=q^nX_{i,0}^2X_{i,1}\cdots X_{i,n}.$$
    For $n=1$, the quiver $\Sigma_{1}$ is disconnected, so the renormalization is not needed. 
\end{remark}

\begin{example}
    For $n=1$, in the notation of Figure \ref{fig:s1}, the embedding $\iota$ reads
    \begin{equation*}
        B_1\mapsto X_2(1+X_1),\quad\quad k_1\mapsto -X_2^2X_1.
        \end{equation*}

    For $n=2$, in the notation of Figure \ref{fig:s2}, the embedding $\iota$ reads
    \begin{align*}
        &B_1\mapsto X_5(1+qX_1(1+qX_3)), & k_1\mapsto -q^2X_5^2X_1X_3,\\
        & B_2\mapsto X_4(1+qX_3(1+qX_2)), & k_2\mapsto -q^2X_4^2X_3X_2.
    \end{align*}

    For $n=3$, in the notation of Figure \ref{fig:s3}, the embedding $\iota$ reads
    \begin{align*}
        B_1\mapsto X_9(1+qX_1(1+qX_6(1+qX_4))), \quad & k_1\mapsto -q^3X_9^2X_1X_6X_4, \\
        B_2 \mapsto X_5(1+qX_4(1+qX_2(1+qX_7))), \quad & k_2 \mapsto -q^3X_5^2X_4X_2X_7, \\
        B_3 \mapsto X_8(1+qX_7(1+qX_6(1+qX_3))), \quad & k_3 \mapsto -q^3X_8^2X_7X_6X_3.
    \end{align*}
\end{example}

The proof of Theorem \ref{thm:emb} will follow from Propositions \ref{prop:emq}, \ref{prop:inj} and \ref{prop:qcl} below.

In what follows, we set 
\begin{equation}\label{eq:cen}
    C_i=\ren{X_{i,0}^2X_{i,1}\cdots X_{i,n}}=\begin{cases}
        X_{i,0}^2X_{i,1} &\text{if }n=1,\\
        q^nX_{1,0}^2X_{i,1}\cdots X_{i,n} &\text{if }n\geq 2,
    \end{cases}
\end{equation}
for $1\leq i\leq n$. It is direct to verify that $C_i$ are central elements in $\T_{\Sigma_n}$.


The following proposition asserts that the defining relations of $\tUi$ are satisfied in $\T_{\Sigma_n}$, under the assignments \eqref{eq:em1} and \eqref{eq:em2}.

\begin{prop}\label{prop:emq}
The assignment \eqref{eq:em1} and \eqref{eq:em2} define a $\Qq$-algebra homomorphism $\iota:\tUi\rightarrow\T_{\Sigma_n}$.
\end{prop}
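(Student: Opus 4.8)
The plan is to verify directly that the images of $B_i$ and $k_i$ under the assignments \eqref{eq:em1} and \eqref{eq:em2} satisfy the defining relations \eqref{eq:R0}, \eqref{eq:R1}, \eqref{eq:R2} of $\tUi$ inside the quantum torus algebra $\T_{\Sigma_n}$. The first step is to record, from the quiver $\Sigma_n$ and the relabelling \eqref{eq:relabel}, the exchange matrix entries $\varepsilon_{X_{i,s},X_{j,t}}$; the key structural fact is that when $|i-j|>1$ the loops based at $X_{i,0}$ and $X_{j,0}$ share no vertices and have no arrows between them, so the corresponding generators commute, while for $|i-j|=1$ the two loops share exactly one vertex (by \eqref{eq:relabel}, e.g. $X_{i+1,0}$-side meets the $X_i$-loop in one node) and there are a controlled number of arrows between the remaining vertices. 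From these combinatorics one extracts the commutation relations among the monomials $M_{i,k}:=\ren{X_{i,0}X_{i,1}\cdots X_{i,k}}$ and $C_i$.

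With the commutation data in hand, relation \eqref{eq:R0} is the easiest: $C_i$ is central in $\T_{\Sigma_n}$ (as already noted after \eqref{eq:cen}), so $k_i=-C_i$ is central, giving $[k_i,k_j]=[k_i,B_j]=0$ for all $i,j$; invertibility of $k_i$ is automatic in $\T_{\Sigma_n}$. Relation \eqref{eq:R1} follows immediately from the observation that for $|i-j|>1$ every $X_{i,s}$ commutes with every $X_{j,t}$, hence $\iota(B_i)$ and $\iota(B_j)$ commute. The substantive computation is relation \eqref{eq:R2} for $|i-j|=1$: here I would expand $\iota(B_i)=\sum_{k=0}^n M_{i,k}$, compute the $q$-commutators $M_{i,k}M_{i,l}$ and $M_{i,k}M_{j,l}$ explicitly using the $\varepsilon$-matrix, and show that the $q$-Serre-type combination $\iota(B_j)\iota(B_i)^2-(q+q^{-1})\iota(B_i)\iota(B_j)\iota(B_i)+\iota(B_i)^2\iota(B_j)$ collapses: most terms cancel in pairs because of the $q$-binomial identity underlying the ordinary $q$-Serre relation, and the surviving terms reassemble into $(q-q^{-1})^2\,\iota(B_j)\,\iota(k_i)=-(q-q^{-1})^2\,\iota(B_j)\,C_i$. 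It is convenient to check this first for small $n$ (say $n=1,2$, matching the explicit formulas in the Examples) to pin down the normalisation and sign, then run the general induction on the telescoping sum $\sum_k M_{i,k}$.

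I expect the main obstacle to be the bookkeeping in \eqref{eq:R2}: one must track the powers of $q$ coming from (a) the renormalisation constants in $M_{i,k}$, (b) the reordering of the shared vertex $X_{i+1,0}=X_{i,n+1}$ (via \eqref{eq:relabel}) that links the two loops, and (c) the arrows between the non-shared vertices of the two adjacent loops. A clean way to organise this is to choose a monomial ordering so that each $M_{i,k}$ is written with vertices in loop-order, compute the single "interaction exponent" $\lambda$ such that $M_{i,k}M_{j,l}=q^{2\lambda(k,l)}M_{j,l}M_{i,k}$, and verify $\lambda$ takes only the values $\{0,\pm1\}$ in the ranges that occur; once that is established the cancellation is forced by the classical computation proving the $q$-Serre relation holds in $\tU$ under the Schrader--Shapiro embedding $t$ (Theorem \ref{thm:tqg}), so one may even hope to deduce \eqref{eq:R2} by relating $\iota$ to a restriction of $t$ through the self-amalgamation map \eqref{eq:aml}, rather than redoing the computation by hand. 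If that comparison with $t$ can be made precise — identifying $\iota(B_i)$ with the image of a combination of $t(F_i)$ and $t(E_iK_i')$ after self-amalgamation — then \eqref{eq:R2} and indeed all relations follow from the already-known relations in $\tU$, and the only genuinely new content is the computation of the central element $C_i$ and the sign in \eqref{eq:em2}. I would present the proof in this second, more conceptual way if the identification goes through, and fall back on the direct $q$-commutator computation otherwise.
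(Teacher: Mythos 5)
Your overall strategy (direct verification of the relations \eqref{eq:R0}--\eqref{eq:R2} in $\T_{\Sigma_n}$) is exactly the paper's, and your treatment of \eqref{eq:R0} and the general shape of the \eqref{eq:R2} computation are on the right track. However, there is a genuine error in your argument for \eqref{eq:R1}. You claim that for $|i-j|>1$ the loops based at $X_{i,0}$ and $X_{j,0}$ ``share no vertices and have no arrows between them,'' so that $\iota(B_i)$ and $\iota(B_j)$ commute termwise. This is false: because of the self-amalgamation, \emph{every} pair of loops intersects. By \eqref{eq:relabel} one has $X_{j,j-i}=X_{i,n+i-j+1}$ for all $1\leq i<j\leq n$, with no restriction to $j=i+1$ (e.g.\ in the $\Sigma_3$-quiver the loops for $i=1$ and $j=3$ share the vertex labelled $6$). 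Consequently, writing $P_{i,k}=\ren{X_{i,0}\cdots X_{i,k}}$, the commutator $[\iota(B_i),\iota(B_j)]$ is \emph{not} zero term by term; it reduces to the two nonvanishing contributions $[P_{i,n+i-j},P_{j,j-i}]$ and $[P_{i,n+i-j+1},P_{j,j-i-1}]$, and one must then use the identification $X_{j,j-i}=X_{i,n+i-j+1}$ to show $P_{i,n+i-j}P_{j,j-i}=q^{-2}P_{i,n+i-j+1}P_{j,j-i-1}$, whence the two contributions cancel. This cancellation is the actual content of \eqref{eq:R1}, and your proposal skips it on the basis of an incorrect structural claim about the quiver. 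Your dichotomy ``distant loops are disjoint, adjacent loops meet in one vertex'' should be replaced by ``all loops pairwise meet in one vertex,'' and the case $|i-j|>1$ then requires the same kind of bookkeeping you reserve for $|i-j|=1$.

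Your proposed shortcut---deducing the relations from the Schrader--Shapiro embedding $t$ via the self-amalgamation---also does not go through as stated. The amalgamation homomorphism \eqref{eq:aml} maps $\T_{Q}$ \emph{into} $\T_{Q_1}\otimes\T_{Q_2}$, i.e.\ out of the glued torus, and for a self-amalgamation there is no algebra map $\T_{\widetilde{\Sigma_n}}\to\T_{\Sigma_n}$ (nor $\T_{\qD_n}\to\T_{\Sigma_n}$) through which $\iota(B_i)$ could be identified with the image of $t(F_i)-q^{-1}t(E_iK_i')$. Moreover the $\imath$Serre relation \eqref{eq:R2} carries the extra term $(q-q^{-1})^2B_jk_i$, which does not follow formally from the $q$-Serre relations in $\tU$ alone. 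So the fallback direct computation is the one you must actually carry out, and with the correction above it matches the paper's proof.
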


\begin{proof}
We need to verify relations \eqref{eq:R0} to \eqref{eq:R2} in $\T_{\Sigma_n}$. Since $C_i$ ($1\leq i\leq n$) are central elements in $\tT_{\Sigma_n}$, the relation \eqref{eq:R0} follows directly. We next verify the relations \eqref{eq:R1} and \eqref{eq:R2}.

For $1\leq i,k\leq n$, write 
\begin{equation}\label{eq:pik}
    P_{i,k}=:X_{i,0}X_{i,1}\cdots X_{i,k}:.
\end{equation}

Firstly take $1\leq i< j\leq n$ with $|i-j|>1$. The following commuting relations in $\T_{\Sigma_n}$ can be verified directly,
\begin{equation*}
    P_{i,n+i-j}P_{j,j-i}=q^{-2}P_{j,j-i}P_{i,n+i-j},\quad P_{i,n+i-j+1}P_{j,j-i-1}=q^2P_{j,j-i-1}P_{i,n+i-j+1},
\end{equation*}
and
\begin{equation*}
    [P_{i,t},P_{j,l}]=0,\quad\text{if }(t,l)\not\in\{(n+i-j,j-i),(n+i-j+1,j-i-1)\}.
\end{equation*}
Hence we have
\begin{align*}
    \iota([B_i,B_j])&=[\sum_{k=0}^nP_{i,k},\sum_{k=0}^nP_{j,k}]\\
    &=[P_{i,n+i-j},P_{j,j-i}]+[P_{i,n+i-j+1},P_{j,j-i-1}]\\
    &=(1-q^2)P_{i,n+i-j}P_{j,j-i}+(1-q^{-2})P_{i,n+i-j+1}P_{j,j-i-1}.
\end{align*}
Recall \eqref{eq:relabel}. We have
\begin{align*}
    P_{i,n+i-j}P_{j,j-i}=&q^{-1}\ren{X_{i,0}\cdots X_{i,n+i-j}X_{j,0}\cdots X_{j,j-i}},\\
    =& q^{-1}\ren{X_{i,0}\cdots X_{i,n+i-j+1}X_{j,0}\cdots X_{j,j-i-1}}\\
    =&q^{-2}P_{i,n+i-j+1}P_{j,j-i-1}.
\end{align*}
Recall the relation \eqref{eq:relabel}. We have
\begin{equation*}
    \iota([B_i,B_j])=((1-q^2)q^{-2}+(1-q^{-2}))P_{i,n+i-j+1,j,j-i-1}=0.
\end{equation*}
Hence we have checked the relation \eqref{eq:R1}. 

Let us now check the relation \eqref{eq:R2}. The following commuting relations can be verified directly.

If $(t,l)\neq (n-1,1)$ or $ (n,0)$, we have
\begin{align}\label{eq:com1}
    P_{i,t}P_{i+1,l}=q^{-1}P_{i+1,l}P_{i,t},\quad \text{for }l\leq t, \quad\text{and}\quad P_{i,t}P_{i+1,l}=qP_{i+1,l}P_{i,t},\quad \text{for }l>t.
\end{align}

We also have
\begin{equation}\label{eq:com2}
    P_{i,n-1}P_{i+1,1}=q^{-3}P_{i+1,1}P_{i,n-1},\qquad P_{i,n}P_{i+1,0}=qP_{i+1,0}P_{i,n},
\end{equation}
and
\begin{equation}\label{eq:com3}
    P_{i,l}P_{i,m}=q^{-2}P_{i,m}P_{i,l}\quad \text{if $l<m$ and $(l,m)\neq (0,n)$},\qquad [P_{i,0},P_{i,n}]=0.
\end{equation}

Fix $1\leq i <n$. For $1\leq t,l,m\leq n$, set
$$S_{t,l,m}=P_{i,t}P_{i+1,l}P_{i+1,m}-(q+q^{-1})P_{i+1,l}P_{i,t}P_{i+1,m}+P_{i+1,l}P_{i+1,m}P_{i,t}.$$
Then 
\begin{equation}\label{eq:Ser}
    \iota\big(B_iB_{i+1}^2-(q+q^{-1})B_{i+1}B_iB_{i+1}+B_{i+1}^2B_i\big)=\sum_{0\leq t,l,m\leq n}S_{t,l,m}.
\end{equation}

Firstly suppose $l=m$. By relations \eqref{eq:com1} and \eqref{eq:com2} we get
\begin{equation}\label{eq:ch1}
    S_{t,l,l}=\begin{cases}
        (1-q^2)(1-q^4)P_{i,n-1}P_{i+1,1}^2, &\text{if }(t,l)=(n-1,1),\\
        0, &\text{if } (t,l)\neq (n-1,1).
    \end{cases}
\end{equation}

Then suppose $l\neq m$. Assume further $(t,l),(t,m)\notin\{ (n-1,1),(n,0)\}$. By relations \eqref{eq:com1} and \eqref{eq:com2} we have
\begin{equation*}
    S_{t,l,m}=\begin{cases}
        (1-q^{2})P_{i,t}P_{i+1,l}P_{i+1,m}, &\text{if }m<t\leq l,\\
        (1-q^{-2})P_{i,t}P_{i+1,l}P_{i+1,m}, &\text{if }l<t\leq m,\\
        0, & \text{if otherwise.}
    \end{cases}
\end{equation*}

Hence under the assumption $l\neq m$ and $(t,l),(t,m)\notin\{ (n-1,1),(n,0)\}$, by \eqref{eq:com3} we have
\begin{equation}\label{eq:ch2}
    S_{t,l,m}+S_{t,m,l}=0,\quad \text{if }(l,m)\neq (0,n), (n,0),
\end{equation}
and
\begin{equation}\label{eq:ch3}
    S_{t,0,n}+S_{t,n,0}=-(q-q^{-1})^2P_{i,t}P_{i+1,0}P_{i+1,n}.
\end{equation}

By relations \eqref{eq:com1} and \eqref{eq:com2} we have
\begin{equation*}
    S_{n-1,1,m}=\begin{cases}
        (1-q^2)P_{i,n-1}P_{i+1,1}P_{i+1,m}, &\text{if }m\leq n-1\\
        (1-q^4)P_{i,n-1}P_{i+1,1}P_{i+1,n}, & \text{if $m=n$.}
    \end{cases}
\end{equation*}
We also have
\begin{equation*}
    S_{n-1,m,1}=\begin{cases}
        (-q^2+q^4)P_{i,n-1}P_{i+1,m}P_{i+1,1}, &\text{if }m\leq n-1\\
        (-q^{-2}+q^2)P_{i,n-1}P_{i+1,n}P_{i+1,1}, & \text{if }m=n.
    \end{cases}
\end{equation*}
Together with \eqref{eq:com3} we have 
\begin{equation}\label{eq:ch4}
    S_{n-1,1,m}+S_{n-1,m,1}=\begin{cases}
        0, &\text{if $m=0$ or $n$,}\\
        (1-q^2)(1-q^4)P_{i,n-1}P_{i+1,1}P_{i+1,m} &\text{if }1<m\leq n-1.
    \end{cases}
\end{equation}
Similarly we get
\begin{equation}\label{eq:ch5}
    S_{n,0,m}+S_{n,m,0}=\begin{cases}
        -(q-q^{-1})^2P_{i,n}P_{i+1,0}P_{i+1,n},& \text{if }m=n\\
        (1-q^{-2})(1-q^4)P_{i,n}P_{i+1,0}P_{i+1,m}, & \text{if }0<m\leq n.
    \end{cases}
\end{equation}

In summary, by combining \eqref{eq:Ser} with \cref{eq:ch1,eq:ch2,eq:ch3,eq:ch4,eq:ch5} we get
\begin{align*}
    &\iota\big(B_iB_{i+1}^2-(q+q^{-1})B_{i+1}B_iB_{i+1}+B_{i+1}^2B_i\big)=\sum_{t=0}^n-(q-q^{-1})^2P_{i,t}P_{i+1,0}P_{i+1,n}\\&+\sum_{m=1}^{n-1}\left((1-q^2)(1-q^4)P_{i,n-1}P_{i+1,1}+(1-q^{-2})(1-q^4)P_{i,n}P_{i+1,0}\right)P_{i+1,m}.
\end{align*}

Note that 
\begin{align*}
    P_{i,n-1}P_{i+1,1}&=q^{-3/2}\ren{X_{i,0}\cdots X_{i,n-1}X_{i+1,0}X_{i+1,1}}, \quad \text{and}\\ P_{i,n}P_{i+1,0}&=q^{1/2}\ren{X_{i,0}\cdots X_{i,n}X_{i+1,0}}.
\end{align*}
Since $X_{i,n}=X_{i+1,1}$, we deduce that $P_{i,n-1}P_{i+1,1}=q^{-2}P_{i,n}P_{i+1,0}$, which implies
\begin{equation*}
    (1-q^2)(1-q^4)P_{i,n-1}P_{i+1,1}+(1-q^{-2})(1-q^4)P_{i,n}P_{i+1,0}=0.
\end{equation*}

Hence we have
\begin{equation*}
    \iota\big(B_iB_{i+1}^2-(q+q^{-1})B_{i+1}B_iB_{i+1}+B_{i+1}^2B_i\big)=\sum_{t=0}^n-(q-q^{-1})^2P_{i,t}P_{i+1,0}P_{i+1,n}.
\end{equation*}

On the other hand, we have 
\begin{equation*}
    \iota(B_ik_{i+1})=-\sum_{t=0}^nP_{i,t}\ren{X_{i+1,0}^2X_{i+1,1}\cdots X_{i+1,n}}=-\sum_{t=0}^nP_{i,t}P_{i+1,0}P_{i+1,n}.
\end{equation*}
Therefore we deduce that
\begin{equation*}
    \iota\big(B_iB_{i+1}^2-(q+q^{-1})B_{i+1}B_iB_{i+1}+B_{i+1}^2B_i\big)=\iota(B_ik_{i+1}).
\end{equation*}

The verification for the equation 
\begin{equation*}
    \iota\left(B_{i+1}B_i^2-(q+q^{-1})B_iB_{i+1}B_i+B_i^2B_{i+1}\right)=\iota(B_{i+1}k_i)
\end{equation*}
is similar and will be skipped. We complete the proof.
\end{proof}

The next step to the proof of Theorem \ref{thm:emb} is to show the injectivity.

\begin{prop}\label{prop:inj}
    The algebra homomorphism $\iota$ is injective.
\end{prop}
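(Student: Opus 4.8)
The plan is to deduce injectivity of $\iota$ from the already-known injectivity of the cluster embedding $t\colon\tU\hookrightarrow{_{\mathbb{F}}\OO_q(\X_{|\qD_n|})}$ of Theorem \ref{thm:tqg}, using that $\Sigma_n$ is built from a single Fock--Goncharov triangle $\widetilde{\Sigma_n}$ by self-amalgamation while $\qD_n$ is built from two copies of $\widetilde{\Sigma_n}$ by amalgamation.

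First I would reduce to a linear-independence statement. By Proposition \ref{prop:PBWb} the elements $B_{\mathbf{i}}(\mathbf{a})k(\mathbf{b})$ (for $\mathbf{a}\in\mathbb{N}^r$, $\mathbf{b}\in\mathbb{Z}^n$) are linearly independent not only in $\tUi$ but already inside $\tU$; this is precisely the first half of the proof of that proposition, via Lemma \ref{le:exp} and the PBW basis \eqref{eq:PBW0}. Hence it suffices to show that their images under $\iota$ are linearly independent in $\T_{\Sigma_n}$. I would first dispose of the central variables: the elements $C_i=-\iota(k_i)$ of \eqref{eq:cen} are central Laurent monomials whose $X_{j,0}$-exponent is $2\delta_{ij}$, so $\iota$ carries $\tUio$ isomorphically onto the Laurent polynomial subalgebra $Z=\Qq[C_1^{\pm1},\dots,C_n^{\pm1}]$ of $\T_{\Sigma_n}$; since $\tUi$ and $\T_{\Sigma_n}$ are domains, hence torsion-free over $\tUio\cong Z$, a localisation argument reduces the claim to linear independence of $\{\iota(B_J)\}_{J\in\mathcal J}$ over $\mathrm{Frac}(Z)$, where $\{B_J\}_{J\in\mathcal J}$ is the word basis of $\tUi$ used in the proof of Proposition \ref{prop:PBWb}. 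The advantage of that basis is that no braid twist enters: for $J=(j_1,\dots,j_m)$ one has $\iota(B_J)=\big(\sum_{k=0}^nP_{j_1,k}\big)\cdots\big(\sum_{k=0}^nP_{j_m,k}\big)$ with $P_{j,k}$ as in \eqref{eq:pik}.

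For the residual independence I would compare with $t$. The self-amalgamation defining $\Sigma_n$ gives an algebra embedding $\jmath\colon\T_{\Sigma_n}\hookrightarrow\T_{\widetilde{\Sigma_n}}$, the self-amalgamation analogue of \eqref{eq:aml} sending each merged vertex $i'=i''$ to $X_{i'}X_{i''}$, while the amalgamation defining $\qD_n$ gives $\T_{\qD_n}\hookrightarrow\T_{\widetilde{\Sigma_n}}\otimes\T_{\widetilde{\Sigma_n}}$. Unwinding the formulas of Theorem \ref{thm:tqg} for $t(F_i)$, $t(E_iK_i')$ and $t(K_iK_i')$ against \eqref{eq:em1}, \eqref{eq:em2} and the relabelling \eqref{eq:relabel}, one should be able to identify $\jmath\circ\iota$ with the restriction to $\tUi$ of the composite $\tU\xrightarrow{t}\T_{\qD_n}\hookrightarrow\T_{\widetilde{\Sigma_n}}\otimes\T_{\widetilde{\Sigma_n}}\xrightarrow{\mathrm{mult}}\T_{\widetilde{\Sigma_n}}$, the point being that on the image of $t(\tUi)$ the multiplication of the two tensor factors is an algebra homomorphism; this is the cluster-theoretic shadow of the coideal property \eqref{eq:cop}. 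Since $t$ is injective and $\tUi\hookrightarrow\tU$ is injective, $\jmath\circ\iota$ is injective, hence so is $\iota$.

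I expect the main obstacle to be the last step: verifying rigorously that $\mathrm{mult}$ composed with the amalgamation restricts to an algebra homomorphism on $t(\tUi)$, and pinning down the exact monomial dictionary between the variables $X_{i,j}$ of $\Sigma_n$ and the variables $V_{i,r},\Lambda_{i,r}$ of $\qD_n$, which requires a careful matching of the two gluing conventions with \eqref{eq:relabel}. If that comparison turns out to be too delicate, the fallback is a direct leading-term computation inside $\T_{\Sigma_n}$: grade by $\deg X_{i,0}=e_i$ and $\deg X_{j,t}=0$ for $t\ge1$, so that $\iota$ is a graded map; within each graded component the top-length Laurent monomial of $\iota(B_J)$ is $P_{j_1,n}\cdots P_{j_m,n}$, whose coefficient is unaffected by the collisions coming from \eqref{eq:relabel} since no shorter product can collide with it, and the remaining words of equal content are then separated by inspecting next-to-leading terms, using the commutation relations \eqref{eq:com1}--\eqref{eq:com3} established in the proof of Proposition \ref{prop:emq}.
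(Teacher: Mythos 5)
Your main route has a fatal gap at its final step. Even granting the (nontrivial, and in fact sign-sensitive) identification of $\jmath\circ\iota$ with $\mathrm{mult}\circ(\T_{\qD_n}\hookrightarrow\T_{\widetilde{\Sigma_n}}\otimes\T_{\widetilde{\Sigma_n}})\circ t|_{\tUi}$, the inference ``$t$ is injective, hence $\jmath\circ\iota$ is injective'' is a non sequitur: the multiplication map $a\otimes b\mapsto ab$ on a quantum torus is neither an algebra homomorphism in general nor injective (e.g.\ $1\otimes x$ and $x\otimes 1$ have the same image), and nothing you say rules out distinct elements of $t(\tUi)\subset\T_{\qD_n}$ collapsing after folding. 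Injectivity of the folded map is essentially equivalent to what you are trying to prove, so this route is circular unless you supply an independent argument for it. (Already for $n=1$ the dictionary is delicate: $\iota(B_1)=X_2+X_2X_1$ has two positive terms, while $t(B_1)=t(F_1)-q^{-1}t(E_1K_1')$ has a minus sign, so the matching of conventions you defer is not cosmetic.)

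Your fallback is closer to the paper's actual argument, which is a leading-term analysis in $\T_{\Sigma_n}$, but it stalls at exactly the hard point. With your grading, $\iota(B_J)$ for words $J$ of the same content lie in the same graded component, and your proposed ``top-length'' monomial $P_{j_1,n}\cdots P_{j_m,n}$ is, up to a power of $q$, the \emph{same} Laurent monomial for every reordering of $J$; so the leading coefficient yields one linear condition, not enough to separate the words, and the promised separation ``by inspecting next-to-leading terms'' is precisely the whole difficulty and is not carried out. The paper avoids this collision by working with the PBW basis $B_{\mathbf{i}}(\mathbf{a})$ for the specific reduced word $\mathbf{i}=(1,2,\dots,n,1,\dots,n-1,\dots,1)$: it proves by induction, using the recursion $B_{\mathbf{i}}(\mathbf{e}_k)=\qcom{B_{\mathbf{i}}(\mathbf{e}_{k-1})}{B_{k_2}}$ and the identity $\qcom{f}{g}=\ren{fg}$ or $0$ according to whether $fg=q^{\pm1}gf$, that $\iota(B_{\mathbf{i}}(\mathbf{e}_k))=\ren{P_{k_1,0}P_{k_1+1,1}\cdots P_{k_2,k_2-k_1}}(1+R_k)$ with $R_k$ without constant term; these minimal monomials are pairwise distinct across $\mathbf{a}$, and the $k(\mathbf{b})$ are then separated by the $X_{t,t}$-degrees. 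Your reduction of the $\tUio$-part via the central monomials $C_i$ is fine, but the core linear-independence step needs the braid-twisted basis (or an equivalent device) to produce genuinely distinct leading monomials.
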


\begin{proof}
    Fix the reduced expression $\mathbf{i}=(1,2,\cdots,n,1,2,\cdots,n-1,\cdots,1,2,1)$ of the longest element $w_0$. Recall the PBW-basis associated with $\mathbf{i}$ in \eqref{eq:PBWb}. 
    
    We set 
    \[
    w_{0,<k}=s_{i_1}s_{i_2}\cdots s_{i_{k-1}},\quad\text{for}\quad 1\leq k\leq r.
    \]
    Here we write $(i_1,i_2,\cdots,i_{r})=\mathbf{i}$, and $r=n(n+1)/2$. For each integer $k\in [1,r]$, take $k_1,k_2$, with $k_1\leq k_2$, such that 
    \[
    w_{0,<k}(\alpha_{i_k})=\alpha_{k_1}+\alpha_{k_1+1}+\cdots +\alpha_{k_2}.
    \]
    Here $\alpha_t$ ($1\leq t\leq n$) are simple roots in the root system. 

Recall the monomials $P_{i,k}$ in \eqref{eq:pik}. We \emph{claim} that for $1\leq k\leq r$, the element $\iota(B_{\mathbf{i}}(\mathbf{e}_k))$ has the leading term $$\ren{P_{k_1,0}P_{k_1+1,1}\cdots P_{k_2,k_2-k_1}},$$ when it is written as a polynomial in the quantum torus algebra. Namely, one has 
    \[
    \iota(B_{\mathbf{i}}(\mathbf{e}_k))=\ren{P_{k_1,0}P_{k_1+1,1}\cdots P_{k_2,k_2-k_1}}(1+R_k),
    \]
    where $R_k$ is a polynomial without constant term.

    We prove the claim by induction on the difference $k_2-k_1$. If $k_2=k_1$, by \cite{WZ22}*{Theorem 7.13} we have $B_{\mathbf{i}}(\mathbf{e}_k)=B_{k_1}$. The claim follows immediately. 
    
    Suppose $k_2>k_1$ and the claim holds for $k_2-k_1-1$. By our choice of the reduced expression, we have $s_{i_1}\cdots s_{i_{k-2}}(\alpha_{i_k})=\alpha_{k_2}$. Together with \emph{loc.cit.}, we have
    \begin{equation}\label{eq:bek}
        \begin{split}
            B_{\mathbf{i}}(\mathbf{e}_k)=&\bT_{i_1}\cdots \bT_{i_{k-1}}(B_{i_k})\\
            =&\bT_{i_1}\cdots \bT_{i_{k-2}}\big(\frac{q^{1/2}B_{i_{k-1}}B_{i_k}-q^{-1/2}B_{i_k}B_{i_{k-1}}}{q-q^{-1}}\big)\\
            =&\frac{q^{1/2}B_{\mathbf{i}}(\mathbf{e}_{k-1})B_{k_2}-q^{-1/2}B_{k_2}B_{\mathbf{i}}(\mathbf{e}_{k-1})}{q-q^{-1}}.
        \end{split}
    \end{equation}
    Also notice that
    \[
    w_{0,<k-1}(\alpha_{i_{k-1}})=\alpha_{k_1}+\alpha_{k_1+1}+\cdots+\alpha_{k_2-1}.
    \]
    By the induction hypothesis, element $\iota(B_{\mathbf{i}}(\mathbf{e}_{k-1}))$ has the leading term 
    \[
    \ren{P_{k_1,0}P_{k_1+1,1}\cdots P_{k_2-1,k_2-k_1-1}}.
    \]

    Recall from \eqref{eq:em1} that
    \[
    \iota(B_{k_2})=\sum_{t=0}^nP_{k_2,t}.
    \]

    Notice that for $0\leq t\leq k_2-k_1-1$ we have
    \[
    P_{k_1,0}\cdots P_{k_2-1,k_2-k_1-1}P_{k_2,t}=q^{-1}P_{k_2,t}P_{k_1,0}\cdots P_{k_2-1,k_2-k_1-1}.
    \]
    We also have
    \[
    P_{k_1,0}\cdots P_{k_2-1,k_2-k_1-1}P_{k_2,k_2-k_1}=qP_{k_2,k_2-k_1}P_{k_1,0}\cdots P_{k_2-1,k_2-k_1-1}.
    \]

It is direct verify that for any two elements $f$, $g$ in the quantum torus algebra, one has
    \[  
    \frac{q^{1/2}fg-q^{-1/2}gf}{q-q^{-1}}=\begin{cases}
        \ren{fg}, &\text{if }fg=qgf,\\
        0, &\text{if }fg=q^{-1}gf.
    \end{cases}
    \]
Plug into \eqref{eq:bek}, we deduce that $\iota(B_{\mathbf{i}}(\mathbf{e}_k))$ has the leading term $$\ren{P_{k_1,0}P_{k_1+1,1}\cdots P_{k_2,k_2-k_1}}.$$ This completes the proof of the claim. 

It follows from the claim that images of $B_{\mathbf{i}}(\mathbf{a})$, for $\mathbf{a}\in\mathbb{N}^{r}$, under $\iota$ have different leading terms. 

Finally, since the leading term of $\iota(B_{\mathbf{i}}(\mathbf{a}))$ does not contain variables $X_{t,t}$, for $1\leq t\leq n$, the degree of $X_{t,t}$ in the leading term of $\iota(B_{\mathbf{i}}(\mathbf{a})k(\mathbf{b}))$ can only be contributed by $\iota(k_t)$. Hence the images of PBW-basis elements
\[
\{B_\mathbf{i}(\mathbf{a})k(\mathbf{b})\mid \mathbf{a}\in\mathbb{N}^{r},\mathbf{b}\in\mathbb{Z}^n\}
\]
under $\iota$ also have different leading terms. Therefore they are linearly independent in the quantum torus algebra. This completes the proof of the injectivity.
\end{proof}

The final step to the proof of Theorem \ref{thm:emb} is to show that elements in the image of the algebra $\tUi$ remain to be Laurent polynomials after mutations. 

\begin{prop}\label{prop:qcl}
Elements in the image of $\iota$ belong to $\FOX$.
\end{prop}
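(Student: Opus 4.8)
Since $\iota$ is an algebra homomorphism and cluster mutations act by isomorphisms of the ambient skew-fields, it suffices to prove that $\iota(B_i)$ and $\iota(k_i)$ remain Laurent polynomials after every finite sequence of mutations of $\Sigma_n$; we may assume $n\ge 2$, the quiver $\Sigma_1$ being disconnected with a single unfrozen vertex, for which the claim is immediate. The plan is to reduce the statement to the already known inclusion $t(\tU)\subset{_{\Qq}\OO_q(\X_{|\qD_n|})}$ of Theorem \ref{thm:tqg}. The bridge is the relation between the two quivers: $\qD_n$ is the amalgamation of two Fock--Goncharov triangles $\widetilde{\Sigma_n}$, while $\Sigma_n$ is the self-amalgamation of one, so unfolding the self-gluing exhibits $\qD_n$ as a free two-fold cover of $\Sigma_n$, in which each unfrozen vertex of $\Sigma_n$ has two lifts lying in the two triangles of $\qD_n$ and the boundary vertex $X_{i,0}$ has lifts $V_i,\Lambda_i$. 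Using the amalgamation formula \eqref{eq:aml} one obtains from this an injective homomorphism of quantum torus algebras $p\colon\T_{\Sigma_n}\hookrightarrow\T_{\qD_n}$ (the square $X_{i,0}^2$ in $C_i$ in \eqref{eq:cen} is the trace of the two boundary lifts $V_i,\Lambda_i$), and $p$ has the two formal properties that drive the reduction. First, it intertwines mutations: a mutation of $\Sigma_n$ at an unfrozen vertex lifts to the commuting sequence of mutations of $\qD_n$ at the lifts of that vertex, since distinct lifts lie in distinct triangles and are therefore non-adjacent — the standard compatibility of amalgamation with mutation (cf. \cite{GS22}, \cite{SS19}), valid also at the vertices of $\Sigma_n$ that were boundary vertices of $\widetilde{\Sigma_n}$ and got unfrozen. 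Second, being of amalgamation type, $p$ is seedwise a monomial change of variables with exponent vectors spanning a saturated sublattice, so it reflects the Laurent property. Hence, for $y\in\tUi$, it is enough to show $p(\iota(y))\in{_{\Qq}\OO_q(\X_{|\qD_n|})}$.

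It then remains to identify $p\circ\iota\colon\tUi\to\T_{\qD_n}$ with the restriction of $t$, up to the sign/scalar twist relating the two conventions (the automorphism $\Phi$ from the proof of Proposition \ref{prop:inti}, under which $B_i$ becomes $F_i+q^{-1}E_iK_i'$ and $k_i$ becomes $-k_i$). Since both are algebra maps out of $\tUi$, it is enough to compare them on the generators. Under the cover the $i$-th loop $X_{i,0},X_{i,1},\dots,X_{i,n}$ of $\Sigma_n$ unfolds to the concatenation of the $K_i$-loop of one triangle of $\qD_n$ and the $K_i'$-loop of the other (using \eqref{eq:relabel} to match shared vertices), which gives $p(\iota(k_i))=-t(K_i)t(K_i')=-t(k_i)$; similarly each partial-loop monomial $P_{i,k}=\ren{X_{i,0}\cdots X_{i,k}}$ of \eqref{eq:pik} unfolds to a product of a partial $V$-loop monomial and a partial $\Lambda$-loop monomial, and summing over $0\le k\le n$ reproduces, term by term, $t(F_i)+q^{-1}t(E_i)t(K_i')$ in the notation of Theorem \ref{thm:tqg}. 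Granting these identities, Theorem \ref{thm:tqg} places $-t(k_i)$ and $t(F_i)+q^{-1}t(E_i)t(K_i')$ in ${_{\Qq}\OO_q(\X_{|\qD_n|})}$, hence $p(\iota(k_i))$ and $p(\iota(B_i))$ lie there, so $\iota(k_i),\iota(B_i)\in\FOX$, and therefore $\iota(\tUi)\subset\FOX$.

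I expect the genuine difficulty to be concentrated in the last step: pinning down the precise dictionary between the labelling $\{X_{i,t}\}$ of $\Sigma_n$ and the labellings $\{V_{j,s}\}$, $\{\Lambda_{j,s}\}$ of $\qD_n$ through the cover, verifying that $p$ is mutation-compatible at the newly unfrozen vertices, and tracking the renormalisation powers of $q$ so that the two descriptions agree on the nose. By contrast, the input from the theory — that $\iota$ is an algebra map (Proposition \ref{prop:emq}) and that $t(\tU)$ lands in the quantum cluster algebra of $\qD_n$ (Theorem \ref{thm:tqg}) — is already available.
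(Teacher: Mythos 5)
Your reduction to Theorem \ref{thm:tqg} does not go through, and the obstruction sits exactly in the two steps you describe as ``formal properties'' of the unfolding map $p$. First, the multiplicative lift $p(X_v)=X_{v_1}X_{v_2}$ cannot intertwine a single mutation $\mu_v$ of $\Sigma_n$ with the pair $\mu_{v_1}\mu_{v_2}$ on $\qD_n$: the exchange factor downstairs is $1+q\,p(X_v)^{\pm1}=1+qX_{v_1}^{\pm1}X_{v_2}^{\pm1}$, whereas performing $\mu_{v_1}$ and then $\mu_{v_2}$ upstairs produces the product $(1+qX_{v_1}^{\pm1})(1+qX_{v_2}^{\pm1})$; these differ (four monomials against two). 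The standard compatibility of amalgamation with mutation in \cite{GS22} and \cite{SS19} holds only at \emph{unglued} vertices, precisely because those are sent to $X_v\otimes 1$ rather than to a product of two variables; in your covering picture \emph{every} vertex of $\Sigma_n$ has two lifts, so no vertex enjoys this compatibility. Second, the identification of $p\circ\iota$ with a twist of $t|_{\tUi}$ fails already on a term count: $\iota(B_i)=\sum_{k=0}^{n}P_{i,k}$ is a sum of $n+1$ Laurent monomials, and any injective monomial map such as $p$ sends it to a sum of $n+1$ distinct monomials, whereas $t(F_i)\pm q^{-1}t(E_i)t(K_i')$ is a sum of $2(n+1-i)+2i=2n+2$ distinct monomials. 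So $\iota$ is genuinely not a monomial unfolding of $t$; the actual relation between the two realisations goes through the coproduct and the larger amalgamated quiver $\mathcal{Z}_n$ (Proposition \ref{prop:cop}), not through a covering $\qD_n\to\Sigma_n$.

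The paper's proof makes no comparison with $\qD_n$: it invokes the one-step mutation criterion (a Laurent polynomial that remains Laurent after every \emph{single} mutation already lies in the quantum cluster algebra, by \cite{BZ05}*{Theorem 5.1} transported to $\mathcal{X}$-algebras via the Goncharov--Shen framing and the cluster ensemble map), observes that the $\iota(k_i)$ are central monomials, and then checks by direct computation that each $\iota(B_i)$ stays Laurent under every single mutation $\mu_{i,t}$ and $\mu_Y$. If you want to keep a reduction-style argument, you would need a genuine folding theory for the $\mathbb{Z}/2$-action exchanging the two triangles of $\qD_n$, which is a substantially heavier input than this elementary check.
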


\begin{proof}
It will suffice to show that generators $\iota(B_i)$ and $\iota(k_i)$, for $1\leq i\leq n$, belong to the quantum cluster algebra $\mathcal{O}_q(\mathcal{X}_{|\Sigma_n|})$.

Since $\iota(k_i)$ ($1\leq i\leq n$) are central monomials in $\T_{\Sigma_n}$, they belong to $\mathcal{O}_q(\mathcal{X}_{|\Sigma_n|})$ by \cite{GS22}*{\S 18.0.4}. We next show that $\iota(B_i)\in\mathcal{O}_q(\mathcal{X}_{|\Sigma_n|})$, for $1\leq i\leq n$. 

In the setting of \emph{quantum cluster $\mathcal{A}$-algebras}, it is well-known that a Laurent polynomial in one cluster chart belongs to the quantum cluster algebra if it remains to be a Laurent polynomial after one-step mutations (see \cite{BZ05}*{Theorem 5.1}). This criteria remains to be true for quantum cluster $\mathcal{X}$-algebras, by applying a trick of Goncharov--Shen. Let us briefly mention their arguments.

Following \cite{GS22}*{Proposition 18.5} (see also \cite{SS19}*{Proposition 4.10}), we consider the framing $\Sigma_n'$ of the quiver $\Sigma_n$. (See \emph{loc.cit.} for precise definitions.) One has the composition of algebra homomorphisms
\begin{equation*}
    \kappa^*:\mathcal{O}_q(\mathcal{X}_{|\Sigma_n|})\hookrightarrow\mathcal{O}_q(\mathcal{X}_{|\Sigma_n'|})\overset{\sim}{\longrightarrow}\mathcal{O}_q(\mathcal{A}_{|\Sigma_n'|}),
\end{equation*}
where $\mathcal{O}_q(\mathcal{A}_{|\Sigma_n'|})$ is the corresponding Berenstein--Zelevinsky quantum upper cluster algebra, and the isomorphism is induced by the cluster ensemble map (cf. \cite{FG09}). Since the map $\kappa^*$ is moreover compatible with cluster mutations, the one-step mutation criteria transfers to quantum cluster $\mathcal{X}$-algebras naturally. 

The remaining part of the proof is dedicated to verify that elements $\iota(B_i)$, for $1\leq i\leq n$, remain to be Laurent polynomials after one-step mutations. The verification is by direct computation.   

The case when $n=1,2$ is direct. We assume $n>2$. Let us write $\mu_{i,k}$ to be the mutation at the vertex $X_{i,k}$.

We compute $\mu_{i,1}(\iota(B_i))$. It is easy to see that
\begin{equation*}
    \mu_{i,1}(X_{i,k})=\begin{cases}
        X_{i,0}(1+qX_{i,1}^{-1})^{-1}, &\text{if }k=0,\\
        X_{i,1}^{-1}, &\text{if }k=1,\\
        X_{i,k}(1+qX_{i,1}), &\text{if }k=2\text{ or
        }n,\\
        X_{i,k}, &\text{if }3\leq k< n.
    \end{cases}
\end{equation*}
Hence 
\begin{equation*}
    \mu_{i,1}(P_{i,0}+P_{i,1})=\mu_{i,1}(X_{i,0}(1+qX_{i,1}))=X_{i,0},\;\text{and}\; \mu_{i,1}(X_{i,0}X_{i,1}X_{i,2})=q^{-1}X_{i,0}X_{i,2}.
\end{equation*}
Therefore we have
\begin{equation*}
\begin{split}
    \mu_{i,1}(\iota(B_i))
        =&X_{i,0}+\sum_{k=2}^{n-1}q^{k-1}X_{i,0}X_{i,2}\cdots X_{i,k}+q^{n-1}X_{i,0}X_{i,2}\cdots X_{i,n}(1+qX_{i,1}),
\end{split}
\end{equation*}
which is a Laurent polynomial.

We next compute $\mu_{i,t}(\iota(B_i))$, for $1<t<n$. It is direct to see that
\begin{equation}\label{eq:lau}
    \mu_{i,t}(X_{i,k})=\begin{cases}
        X_{i,t-1}(1+qX_{i,t}^{-1})^{-1}, &\text{if }k=t-1,\\
        X_{i,t}^{-1}, &\text{if }k=t,\\
         X_{i,t+1}(1+qX_{i,t}), &\text{if }k=t+1,\\
         X_{i,k}, &\text{if otherwise}.
    \end{cases}
\end{equation}
Hence 
\begin{equation}\label{eq:sq}
\mu_{i,t}(X_{i,t-1}(1+qX_{i,t}))=X_{i,t-1},\;\text{and}\;\mu_{i,1}(X_{i,t-1}X_{i,t}X_{i,t+1})=q^{-1}X_{i,t-1}X_{i,t+1}.
\end{equation}

Express $\iota(B_i)$ as the following,
\begin{equation*}
    \begin{split}
        \iota(B_i)
        =&\sum_{k=0}^{t-2}P_{i,k}+q^{t-1}X_{i,0}\cdots X_{i,t-2}X_{i,t-1}(1+qX_{i,t})\\&+\sum_{k=t+1}^nq^kX_{i,0}\cdots X_{i,t-1}X_{i,t}X_{i,t+1}\cdots X_{i,k}.
        \end{split}
\end{equation*}
Then thanks to \eqref{eq:lau} and \eqref{eq:sq}, we deduce that $\mu_{i,t}(\iota(B_i))$ is a Laurent polynomial.

We next compute $\mu_{i,n}(\iota(B_i))$. It is direct to see that
\begin{equation}\label{eq:lau1}
    \mu_{i,n}(X_{i,k})=\begin{cases}
        X_{i,0}(1+qX_{i,n}), &\text{if }k=0,\\
        X_{i,k}(1+qX_{i,n}^{-1})^{-1}, &\text{if }k=1\text{ or }n-1,\\
        X_{i,n}^{-1}, &\text{if }k=n,\\
        X_{i,k}, &\text{if otherwise}.
    \end{cases}
\end{equation}

Then 
\begin{equation}\label{eq:qw}
    \mu_{i,n}(X_{i,0}X_{i,1})=qX_{i,0}X_{i,n}X_{i,1},\;\text{and}\;\mu_{i,n}(X_{i,n-1}(1+qX_{i,n}))=X_{i,n-1}.
\end{equation}

We express $\iota(B_i)$ as the following,
\begin{equation}\label{eq:qe}
    \iota(B_i)=X_{i,0}+\sum_{k=1}^{n-2}q^kX_{i,0}X_{i,1}\cdots X_{i,k}+q^{n-1}X_{i,0}X_{i,1}\cdots X_{i,n-1}(1+qX_{i,n}).
\end{equation}
Thanks to \eqref{eq:lau1} and \eqref{eq:qw}, each term on the right hand side of \eqref{eq:qe} is a Laurent polynomial under the mutation $\mu_{i,n}$. Therefore $\mu_{i,n}(\iota(B_i))$ is a Laurent polynomial.

Now we consider the mutation at the unfrozen vertex $Y$, where $Y\notin\{X_{i,k}\mid 1\leq k\leq n\}$. Let $\sigma_{n,i}$ to be the subquiver of $\Sigma_{n}$ containing vertices $X_{i,k}$ ($0\leq k\leq n$). If $Y$ is not connected to $\sigma_{n,i}$, then the expression of $\iota(B_i)$ remains the same after the mutation $\mu_Y$. Suppose $Y$ is connected to $\sigma_{n,i}$. Then from the quiver one sees that $Y$ is connected to either 2 or 4 vertices of $\sigma_{n,i}$.

Assume $Y$ is connected to 2 vertices of $\sigma_{n,i}$, say $X_{i,t}$ and $X_{i,t+1}$, for $1\leq t<n$. Then
\begin{equation*}
    \mu_Y(X_{i,k})=\begin{cases}
        X_{i,t}(1+qY), &\text{if }k=t,\\
        X_{i,t+1}(1+qY^{-1})^{-1}, &\text{if }k=t+1,\\
        X_{i,t}, &\text{if otherwise.}
    \end{cases}
\end{equation*}
Hence
\begin{equation*}
    \mu_Y(X_{i,t}X_{i,t+1})=qX_{i,t}YX_{i,t+1}.
\end{equation*}
It follows easily that under mutation $\mu_Y$, the monomial $P_{i,k}$ ($0\leq k\leq n$) remain to be Laurent, so $\mu_Y(\iota(B_i))$ is a Laurent polynomial.

Assume $Y$ is connected to 4 vertices of $\sigma_{n,i}$, in which case $Y$ is connected to $X_{i,1}$, $X_{i,2}$, $X_{i,n-1}$ and $X_{i,n}$. Then
\begin{equation*}
    \mu_Y(X_{i,k})=\begin{cases}
        X_{i,k}(1+qY), &\text{if }k=1\text{ or }n-1,\\
        X_{i,k}(1+qY^{-1})^{-1}, &\text{if }k=2\text{ or }n,\\
        X_{i,k}, &\text{if otherwise}.
    \end{cases}
\end{equation*}
Hence 
\begin{equation*}
    \mu_Y(X_{i,1}X_{i,2})=qX_{i,1}YX_{i,2},\quad\text{and}\quad \mu_Y(X_{i,n-1}X_{i,n})=qX_{i,n-1}YX_{i,n}.
\end{equation*}
It then follows easily that elements $\mu_Y(P_{i,k})$, for $0\leq k\leq n$, remain to be Laurent polynomials, so $\mu_Y(\iota(B_i))$ is a Laurent polynomial.
\end{proof}

\begin{remark}
    In the cluster realisations of quantum groups, it is known that Chevalley generators $E_i$, $F_i$ become monomials after certain sequences of cluster mutations. This is not true for generators $B_i$ in our setting.
\end{remark}

Recall the central element $C_i$ in \eqref{eq:cen}. Central reductions of $\imath$quantum groups (see \eqref{eq:icen}) give the cluster realisations the algebra $\dUi$.
\begin{cor}\label{cor:fui}
    The embedding $\iota$ induces an embedding of the $\imath$quantum group $\dUi$ into the quotient of the algebra $\FOX$ by the relations $C_i=1$, for all $1\leq i\leq n$.
\end{cor}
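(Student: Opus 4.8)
The plan is to realise $\bar\iota$ explicitly and then prove injectivity by a leading-term argument. By \eqref{eq:em2} and \eqref{eq:cen} one has $\iota(k_i)=-C_i$, so $\iota(k_i+1)=1-C_i=-(C_i-1)$. Since the $C_i$ are central in $\T_{\Sigma_n}$, the image under $\iota$ of the two-sided ideal of $\tUi$ generated by $\{k_i+1\mid 1\le i\le n\}$ — which is $\ker\pi^\imath$ by \S\ref{sec:iqg} — lies in the two-sided ideal $\mathcal I$ of $\FOX$ generated by $\{C_i-1\mid 1\le i\le n\}$. Hence $\iota$ descends to an algebra homomorphism $\bar\iota\colon\dUi\to\FOX/\mathcal I$, and it remains to prove that $\bar\iota$ is injective.

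First I would compose $\bar\iota$ with the reduction $\FOX/\mathcal I\to\overline{\T}_{\Sigma_n}:=\T_{\Sigma_n}/\mathcal J$, where $\mathcal J=\sum_{i}(C_i-1)\T_{\Sigma_n}$, and show the composite $\dUi\to\overline{\T}_{\Sigma_n}$ is already injective; since $\dUi$ has the PBW basis $\{B_{\mathbf{i}}(\mathbf{a})\mid\mathbf{a}\in\mathbb N^r\}$ (end of \S\ref{sec:pbw}), this amounts to proving that the images $\overline{\iota(B_{\mathbf{i}}(\mathbf{a}))}$ are $\Qq$-linearly independent in $\overline{\T}_{\Sigma_n}$. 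The input is the leading-term analysis from the proof of Proposition \ref{prop:inj}: in $\T_{\Sigma_n}$ one has $\iota(B_{\mathbf{i}}(\mathbf{a}))=M_{\mathbf{a}}\bigl(1+R_{\mathbf{a}}\bigr)$, where the $M_{\mathbf{a}}$ are pairwise distinct Laurent monomials in which none of the variables $X_{t,t}$ occurs, and $R_{\mathbf{a}}$ is a polynomial with zero constant term whose monomials involve only the variables $X_{t,k}$ with $k\ge1$.

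Two structural observations make the passage to $\overline{\T}_{\Sigma_n}$ harmless. (i) The exponent vectors $\gamma_1,\dots,\gamma_n$ of $C_1,\dots,C_n$ are linearly independent: by the relabelling rule \eqref{eq:relabel} the vertex $X_{i,i}$ is identified with no other vertex, so it occurs with exponent $\delta_{ij}$ in $C_j$ (and $X_{i,0}^2$ occurs only in $C_i$). Consequently $\T_{\Sigma_n}$ is free over its central subalgebra $\Qq[C_1^{\pm1},\dots,C_n^{\pm1}]$, and $\overline{\T}_{\Sigma_n}$ has a $\Qq$-basis given by the images of the monomials $X^v$ with $v$ ranging over a set of representatives of $L/L'$, where $L$ is the exponent lattice of $\T_{\Sigma_n}$ and $L'=\bigoplus_i\mathbb Z\gamma_i$. (ii) Setting $d(X_{i,0})=n$ for the frozen vertices and $d(X_{t,k})=-2$ for all unfrozen vertices defines a $\mathbb Z$-grading of $\T_{\Sigma_n}$ with $d(\gamma_i)=0$ for all $i$, hence a $\mathbb Z$-grading of $\overline{\T}_{\Sigma_n}$. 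With respect to $d$ the monomial $M_{\mathbf{a}}$ is the strictly largest term of $\iota(B_{\mathbf{i}}(\mathbf{a}))$ — every monomial of $M_{\mathbf{a}}R_{\mathbf{a}}$ is $M_{\mathbf{a}}$ times a nonconstant monomial in the unfrozen variables, which strictly lowers $d$ — and this persists in $\overline{\T}_{\Sigma_n}$ because no nonzero element of $L'$ has negative $d$-degree; moreover the $\overline{M_{\mathbf{a}}}$ stay pairwise distinct, since if $M_{\mathbf{a}}$ and $M_{\mathbf{a}'}$ differed by a scalar times a monomial in the $C_i$, then comparing the exponents of the $X_{t,t}$ (zero on both sides, $\delta_{tj}$ in $C_j$) would force the $C_i$-multiplicities to vanish, so $M_{\mathbf{a}}=M_{\mathbf{a}'}$ and hence $\mathbf{a}=\mathbf{a}'$ by Proposition \ref{prop:inj}. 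A routine triangularity argument — pick the maximal $d$-degree occurring in a hypothetical dependence $\sum_{\mathbf{a}}\lambda_{\mathbf{a}}\overline{\iota(B_{\mathbf{i}}(\mathbf{a}))}=0$ and read off the nonzero coefficient of each $\overline{M_{\mathbf{a}}}$ in that top degree — then forces all $\lambda_{\mathbf{a}}=0$.

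The descent of $\iota$ and the final triangularity step are routine; the delicate point is observation (ii), namely that imposing $C_i=1$ does not collapse distinct monomials of the various $\iota(B_{\mathbf{i}}(\mathbf{a}))$ onto one another and so preserves the triangular structure exploited in Proposition \ref{prop:inj}. The grading $d$ together with the fact that the $X_{t,t}$-coordinates ``see through'' the sublattice $L'$ is exactly what secures this, and I expect most of the write-up to consist of verifying these two bookkeeping facts for the quiver $\Sigma_n$ (the degenerate case $n=1$, where $\Sigma_1$ is disconnected and no renormalisation is needed, being immediate).
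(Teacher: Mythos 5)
The paper states this corollary without proof, treating it as an immediate consequence of the central reduction \eqref{eq:icen} together with the identity $\iota(k_i)=-C_i$; your write-up is therefore not so much a different route as a completed one. The descent half of your argument (centrality of the $C_i$, $\iota(k_i+1)=-(C_i-1)$, $\ker\pi^\imath=(k_i+1)$) is exactly what the paper implicitly invokes. The injectivity half is the genuinely nontrivial point that the paper leaves unaddressed, and your argument for it is correct: injectivity of $\iota$ alone does not survive the base change $C_i\mapsto 1$, so one really does need to rule out collisions, modulo the sublattice generated by the exponent vectors of the $C_i$, both among the leading monomials $M_{\mathbf a}$ and between leading and non-leading monomials. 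Your two bookkeeping facts check out against the paper's data: by \eqref{eq:relabel} the vertex $X_{t,t}$ carries a single label, so it occurs in $C_j$ with exponent $\delta_{tj}$ and in no $M_{\mathbf a}$ (as recorded at the end of the proof of Proposition \ref{prop:inj}), which gives both the independence of the $\gamma_i$ and the separation of the $\overline{M_{\mathbf a}}$; and since each $P_{i,k}$ contains the frozen variable $X_{i,0}$ exactly once and the variables $X_{i,k}$, $k\geq 1$, are all unfrozen, every monomial of $\iota(B_{\mathbf i}(\mathbf a))$ is $M_{\mathbf a}$ times a monomial with non-negative exponents in unfrozen variables, so your grading $d$ (which kills the $C_i$, making $\mathcal J$ graded) is strictly decreased on the correction terms. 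The triangularity step then closes the argument. In short: the proposal is correct, and it supplies the injectivity verification that the paper's one-line justification omits.
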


\subsection{Coideal structures}\label{sec:coid}

In this subsection, we interpret coideal structures of $\imath$quantum groups in terms of the amalgamation of quivers.

Recall the coideal structure $\Delta: \tUi\longrightarrow \tUi\otimes \tU$ of $\imath$quantum groups in \eqref{eq:cop}, and the labelling of the quivers $\Sigma_n$, $\mathcal{D}_n$ in Section \ref{sec:quiv}, Section \ref{sec:clq}, respectively. 

Let us consider the amalgamation of the quivers $\Sigma_n$ and $\mathcal{D}_n$, by identifying vertices $V_i$ with $X_{i,0}$, for $1\leq i\leq n$. The resulting quiver is denoted by $\mathcal{Z}_n$. Then we have an algebra embedding $\T_{\mathcal{Z}_n}\hookrightarrow\T_{\Sigma_n}\otimes \T_{\mathcal{D}_n}$ (see \eqref{eq:aml}).

Recall the cluster realisations $\iota$ and $t$ of $\imath$quantum groups and quantum groups, in Theorem \ref{thm:tqg} and Theorem \ref{thm:emb}, respectively. The algebra homomorphism $(\iota\otimes t)\circ\Delta$ factors through $\T_{\mathcal{Z}_n}$,
\begin{equation}\label{eq:itd}
    (\iota\otimes t)\circ\Delta: \tUi\longrightarrow \T_{\mathcal{Z}_n}\subset \T_{\Sigma_n}\otimes\T_{\mathcal{D}_n}.
\end{equation}

For each $1\leq i\leq n$, we consider a path $\mathbf{L}_i$ in $\mathcal{Z}_n$, which is the concatenation of three paths, $\mathbf{\Lambda}_i$, $\mathbf{O}_i$ and $\mathbf{V}_i$. Here $\mathbf{\Lambda}_i$ is the path in $\mathcal{D}_n$, which starts from the vertex $\Lambda_{n+1-i}$, going in the North-West direction first and then in the South-West direction, and ends at the vertex $V_i$. Similarly $\mathbf{V}_i$ is the path in $\mathcal{D}_n$. It starts from $V_i$ and ends at $\Lambda_{n+1-i}$, which goes in the South-East direction first, and in the North-East direction next. The path $\mathbf{O}_i$ is the loop in $\Sigma_n$, which passes vertices $(X_{i,0},X_{i,1},\cdots,X_{i,n},X_{i,0})$. Then the path $\mathbf{L}_i$ is a loop, starting and ending at the same vertex $\Lambda_{n+1-i}$. Besides the starting and ending point, $\mathbf{L}_i$ self-intersects at another vertex $X_{i,0}=V_i$. We label the vertices $Z_{i,1},Z_{i,2},\cdots,Z_{i,p}=Z_{i,1}$ along the path $\mathbf{L}_i$, where $p=3n+4$. Note that $X_{i,0}$ and $\Lambda_{n+1-i}$ are labelled twice.

\begin{figure}[h]
\centering
\begin{tikzpicture}[every node/.style={inner sep=0, minimum size=0.4cm, draw, thick}, thick, y=0.8cm]
\begin{scope}[>=latex,xshift=1]
\node (8) at (-2,-1) [circle] {\tiny{8}};
\node (9) at (0,-3) [circle] {\tiny{9}};
\node (4) at (2,-1) {\tiny{4}};
\node (3) at (-2,1) [circle] {\tiny{3}};
\node (7) at (-1,0) [circle] {\tiny{7}};
\node (10) at (0,-1) [circle] {\tiny{10}};
\node (5) at (1,0) [circle] {\tiny{5}};
\node (1) at (2,1) {\tiny{1}};
\node (2) at (0,3) [circle] {\tiny{2}};
\node (6) at (0,1) [circle] {\tiny{6}};
\node (11) at (-3,0) [circle] {\tiny{11}};
\node (12) at (-4,1.5) [circle] {\tiny{12}};
\node (13) at (-4,-1.5) [circle] {\tiny{13}};

\draw [->] (5) to (6);
\draw [->] (6) to (7);
\draw [->] (7) to (10);
\draw [->] (10) to (5);

\draw [->] (1) to (2);
\draw [->] (2) to (3);
\draw [->] (3) to (7);
\draw [->] (7) to (8);
\draw [->] (8) to (9);
\draw [->] (9) to (4);
\draw [->] (4) to (5);
\draw [->] (5) to (1);

\draw [->] (5) to (9);
\draw [->] (9) to (7);
\draw [->] (7) to (2);
\draw [->] (2) to (5);

\draw [->] (11) to (3);
\draw [->] (8) to (11);
\draw [->] (3) to (12);
\draw [->] (13) to (8);

\draw [->,double equal sign distance] (12) to (11);
\draw [->,double equal sign distance] (11) to (13);
\draw [->,double equal sign distance] (13) to (12);

\draw [->, dashed] (1) to (4);
\end{scope}

\end{tikzpicture}
\caption{$\mathcal{Z}_2$-quiver.}
\label{fig:Z2}
\end{figure}
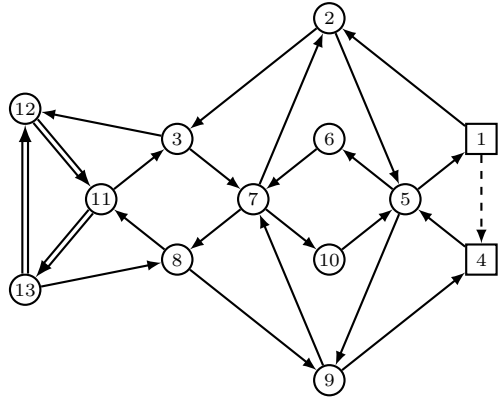

For example, when $n=2$, the $\mathcal{Z}_2$-quiver is as shown in Figure \ref{fig:Z2}. Then $\mathbf{L}_1$ is the path consisting of vertices labelled by $(1,2,3,12,11,3,7,10,5,1)$, and $\mathbf{L}_2$ is the path consisting of vertices labelled by $(4,5,6,7,8,11,13,8,9,4)$.

The following proposition can be verified directly.

\begin{prop}\label{prop:cop}
    Under the map \eqref{eq:itd}, the images of $B_i$ and $k_i$, for $1\leq i\leq n$, in $\T_{\mathcal{Z}_n}$ are 
\begin{equation}
    \Delta(B_i)=\sum_{l=1}^{p-1}\ren{Z_{i,1}Z_{i,2}\cdots Z_{i,l}},\quad \Delta(k_i)=-\ren{Z_{i,1}Z_{i,2}\cdots Z_{i,p}}.
\end{equation}
\end{prop}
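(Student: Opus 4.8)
The plan is to compute the images of $B_i$ and $k_i$ under the composition $(\iota\otimes t)\circ\Delta$ directly, using the explicit formulas for $\Delta$, $\iota$ and $t$, and then to recognise the resulting expressions in $\T_{\mathcal{Z}_n}$ as the renormalised path monomials along the loop $\mathbf{L}_i$. First I would recall from \eqref{eq:cop} that $\Delta(B_i)=\Delta(F_i-q^{-1}E_iK_i')$; using the coproduct formulas for $\tU$ this gives
\begin{equation*}
\Delta(B_i)=F_i\otimes K_i'+1\otimes F_i-q^{-1}(E_i\otimes 1+K_i\otimes E_i)(K_i'\otimes K_i')=B_i\otimes K_i'+1\otimes F_i-q^{-1}K_iK_i'\otimes E_iK_i',
\end{equation*}
i.e. $\Delta(B_i)=B_i\otimes K_i' + 1\otimes F_i - q^{-1}k_i\otimes E_iK_i'$. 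Similarly $\Delta(k_i)=\Delta(K_iK_i')=k_i\otimes K_iK_i' = k_i\otimes k_i$. So the problem reduces to plugging in $\iota$ on the first tensor factor and $t$ on the second.

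Next I would use the amalgamation embedding $\T_{\mathcal{Z}_n}\hookrightarrow\T_{\Sigma_n}\otimes\T_{\mathcal{D}_n}$ from \eqref{eq:aml}: the vertices of $\mathcal{Z}_n$ not on the glued locus map to $X_v\otimes 1$ (for $v$ in $\Sigma_n$ off the $X_{i,0}$) or $1\otimes X_v$ (for $v$ in $\mathcal{D}_n$ off the $V_i$), while the glued vertices $X_{i,0}=V_i$ map to $X_{i,0}\otimes V_i$. Under $\iota$, the factor $B_i$ contributes $\sum_{k=0}^n \ren{X_{i,0}\cdots X_{i,k}}$ and $k_i$ contributes $-\ren{X_{i,0}^2 X_{i,1}\cdots X_{i,n}}$, which are exactly the renormalised monomials along the loop $\mathbf{O}_i$ (the ``middle'' segment $Z_{i,n+2},\ldots$ of $\mathbf{L}_i$); under $t$, the factors $F_i$, $K_i'$, $E_iK_i'$ are sums of renormalised monomials along the two ``wings'' $\mathbf{\Lambda}_i$ and $\mathbf{V}_i$ inside $\mathcal{D}_n$ — here I would match $t(F_{n+1-i})$, $t(K'_{n+1-i})$ against the labelling of the North-West/South-West path $\mathbf{\Lambda}_i$ ending at $V_i$, and $t(E_{n+1-i})$ (the root vector, giving $E_iK_i'$ once multiplied by $t(K_i')$) against the South-East/North-East path $\mathbf{V}_i$. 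The key combinatorial identity to verify is that, after the amalgamation identification at $X_{i,0}=V_i$ (which squares that variable when it appears in both factors, explaining the $K_i'$ twist on $B_i\otimes K_i'$ and the $k_i$ twist on $k_i\otimes E_iK_i'$), the three pieces telescope: $B_i\otimes K_i' + 1\otimes F_i - q^{-1}k_i\otimes E_iK_i'$ becomes exactly $\sum_{l=1}^{p-1}\ren{Z_{i,1}\cdots Z_{i,l}}$, with the $V_i$-wing contributing the partial products $l=1,\ldots,n+1$, the $\mathbf{O}_i$-loop contributing $l=n+2,\ldots,2n+2$, and the $E$-wing contributing $l=2n+3,\ldots,p-1$; similarly $k_i\otimes k_i\mapsto -\ren{Z_{i,1}\cdots Z_{i,p}}$ since the full loop monomial is the product of the three full-segment monomials with $V_i=X_{i,0}$ and $\Lambda_{n+1-i}$ each counted twice.

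I expect the main obstacle to be bookkeeping rather than conceptual: one has to (i) pin down the exact correspondence between the vertex labels $Z_{i,1},\ldots,Z_{i,p}$ along $\mathbf{L}_i$ and the explicit monomial summands of $t(F_{n+1-i})$, $t(K'_{n+1-i})$, $t(E_{n+1-i})$ under the relabelling \eqref{eq:relabel} of $\mathcal{D}_n$'s vertices, and (ii) check that all the $q$-power discrepancies cancel — that is, that replacing the ordered products $X_{i,0}\cdots X_{i,k}$, and the $t$-side monomials, by their renormalisations $\ren{\cdot}$, together with the scalars $-q^{-1}$, the central-reduction twists on $K_i'$, and the identifications forced by amalgamation, all combine to produce precisely the renormalised partial path products with no leftover scalar. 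Since each individual monomial on each side is already renormalised up to a power of $q$ and the renormalised monomial is unique (Section \ref{sec:defcl}), it suffices to check agreement of the underlying Laurent monomials (which is the combinatorial path-telescoping above) and then invoke uniqueness of $\ren{\cdot}$; this is why the proposition ``can be verified directly''. I would carry out the $n=2$ case explicitly against Figure \ref{fig:Z2} as a sanity check ($\mathbf{L}_1=(1,2,3,12,11,3,7,10,5,1)$, $\mathbf{L}_2=(4,5,6,7,8,11,13,8,9,4)$), and then state the general case by the same telescoping argument.
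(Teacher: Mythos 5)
Your strategy is the one the paper intends: it offers no written argument beyond ``can be verified directly'', and the direct verification consists precisely of expanding $\Delta(B_i)=B_i\otimes K_i'+1\otimes F_i-q^{-1}k_i\otimes E_iK_i'$ and $\Delta(k_i)=k_i\otimes k_i$ (both of your computations are correct), applying $\iota\otimes t$, and matching summands against partial products along $\mathbf{L}_i$ through the amalgamation embedding. Two corrections to your bookkeeping, though. First, your segment-to-summand dictionary is off: the loop $\mathbf{L}_i$ \emph{begins} at $\Lambda_{n+1-i}$ with the wing $\mathbf{\Lambda}_i$, and the block sizes depend on $i$. Concretely, $1\otimes F_i$ accounts for $l=1,\dots,2(n+1-i)$ (the $2(n+1-i)$ summands of $t(F_i)$ on the vertices $\Lambda_{n+1-i,\ast}$); $B_i\otimes K_i'$ accounts for the next $n+1$ values $l=2n-2i+3,\dots,3n-2i+3$ (here the glued vertex $X_{i,0}=V_i$ occurs once in the partial product, matching $X_{i,0}\otimes V_i$); and $-q^{-1}k_i\otimes E_iK_i'$ accounts for the last $2i$ values $l=3n-2i+4,\dots,3n+3=p-1$ (now $X_{i,0}=V_i$ occurs twice, producing $X_{i,0}^2X_{i,1}\cdots X_{i,n}\sim-\iota(k_i)$ on the left and the $2i$ summands of $t(E_i)$ times $t(K_i')$ on the right, the two signs cancelling). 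Your stated ranges $l=1,\dots,n+1$, etc., are independent of $i$ and put the wings in the wrong order. Second, be aware that uniqueness of the renormalised monomial only determines each term up to a factor in $q^{\mathbb{Z}}$, so the scalar check you list as item (ii) is genuinely needed and is not subsumed by invoking uniqueness of $\ren{\cdot}$; with those two points repaired the argument goes through.
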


\subsection{Quasi-cluster nature of the braid group actions}\label{sec:qucl}

In this subsection, we explicitly express braid group symmetries on $\imath$quantum groups in terms of compositions of cluster mutations, permutations of cluster variables, and renormalizations of frozen variables, under the embedding in Theorem \ref{thm:emb}.

For $1\leq i\leq n$ and $n\geq 2$, consider the following sequence of mutations on any quiver mutation-equivalent to $\Sigma_n$,
\begin{equation*}
    \widetilde{\mu}_i=\mu_{i,2}\circ\mu_{i,3}\circ\cdots\circ\mu_{i,n-1}\circ\mu_{i,n}\circ\mu_{i,n-1}\circ\cdots \circ\mu_{i,3}\circ\mu_{i,2}.
\end{equation*}
For $n=1$, $\widetilde{\mu}_i$ is defined to be the identity. We write $\Sigma_{n,i}=\widetilde{\mu}_i\Sigma_n$. Then $\widetilde{\mu}_i\Sigma_{n,i}=\Sigma_n$. Let $S_i$ be the map between quivers which are mutation-equivalent to $\Sigma_n$, given by swapping vertices $X_{i,1}$ and $X_{i,n}$. It induces a map between corresponding quantum torus algebras, which is still denoted by $S_i$. Then we have algebra automorphism between fraction fields 
$$S_i\circ\widetilde{\mu_i}:\tT_{\Sigma_n}\longrightarrow \tT_{S_i\Sigma_{n,i}}.$$
Here by slightly abuse of notations we still use $\widetilde{\mu}_i$ to denote the composition of the corresponding sequence of quantum cluster mutations. 

For $n\geq 2$, we denote $\Sigma_{n,i}^k$ ($0\leq k\leq 2n-3$) to be the quiver obtained by applying the first $(2n-3-k)$-step mutations of $\widetilde{\mu}_i$. Then $\Sigma_{n,i}^{2n-3}=\Sigma_n$ and $\Sigma_{n,i}=\Sigma_{n,i}^{0}$.

The quiver $S_i\Sigma_{n,i}$ can be obtained from $\Sigma_n$, by removing the arrows $X_{i,1}\rightarrow X_{i-1,0}$, $X_{i+1,0}\rightarrow X_{i+1,1}$, $X_{i+1,n}\rightarrow X_{i+1,0}$, adding the arrows $X_{i+1,1}\rightarrow X_{i-1,0}$, $X_{i,n-1}\rightarrow X_{i+1,0}$, and reversing the dashed arrow $X_{i,0}\dasharrow X_{i+1,0}$. In particular, $S_i\Sigma_{n,i}$ has the same unfrozen part as $\Sigma_n$ (cf. \cite{CS23}*{5.2}).

Recall the central element $C_i$ in \eqref{eq:cen}. The following lemma follows directly from the shape of the quiver $S_i\Sigma_{n,i}$. 

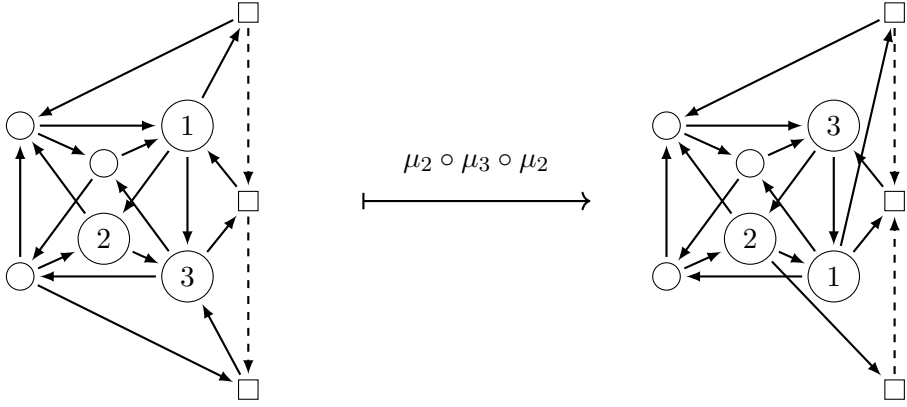
\begin{figure}[h]
    \centering
    \begin{tikzpicture}every node/.style={inner sep=0, minimum size=0.4cm, thick, draw, fill=white}, thick, x=0.9cm, y=1.2cm
    
    \node (9) at (1.5, 0.5) [draw, circle] {};
    \node (2) at (1.5, 2.5) [draw, circle] {};
    \node (4) at (3.7, 2.5) [draw, circle] {$\tiny{1}$};
    \node (7) at (3.7, 0.5) [draw, circle] {$\tiny{3}$};
    \node (6) at (2.6, 1) [draw, circle] {$\tiny{2}$};
    \node (3) at (2.6, 2) [draw, circle] {};
    \node (1) at (4.5, 4) [draw] {};
    \node (5) at (4.5, 1.5) [draw] {};
    \node (8) at (4.5, -1) [draw] {};

    \begin{scope}[>=latex]
        \qdarrow{1}{5};
        \qdarrow{5}{8};
        \qarrow{1}{2};
        \qarrow{2}{3};
        \qarrow{3}{4};
        \qarrow{4}{1};
        \qarrow{5}{4};
        \qarrow{4}{6};
        \qarrow{6}{7};
        \qarrow{7}{5};
        \qarrow{8}{7};
        \qarrow{7}{3};
        \qarrow{3}{9};
        \qarrow{9}{8};
        \qarrow{2}{4};
        \qarrow{4}{7};
        \qarrow{7}{9};
        \qarrow{9}{2};
        \qarrow{6}{2};
        \qarrow{9}{6};
    \end{scope}

    \draw[thick,|->] (6,1.5) -- (9,1.5);
    \node at (7.5,2) {$\mu_{2}\circ\mu_{3}\circ\mu_{2}$};

    \node (9) at (10, 0.5) [draw, circle] {};
    \node (2) at (10, 2.5) [draw, circle] {};
    \node (4) at (12.2, 2.5) [draw, circle] {$\tiny{3}$};
    \node (7) at (12.2, 0.5) [draw, circle] {$\tiny{1}$};
    \node (6) at (11.1, 1) [draw, circle] {$\tiny{2}$};
    \node (3) at (11.1, 2) [draw, circle] {};
    \node (1) at (13, 4) [draw] {};
    \node (5) at (13, 1.5) [draw] {};
    \node (8) at (13, -1) [draw] {};

    \begin{scope}[>=latex]
        \qdarrow{1}{5};
        \qdarrow{8}{5};
        \qarrow{1}{2};
        \qarrow{2}{3};
        \qarrow{3}{4};
        \qarrow{5}{4};
        \qarrow{4}{6};
        \qarrow{6}{7};
        \qarrow{7}{5};
        \qarrow{7}{3};
        \qarrow{3}{9};
        \qarrow{2}{4};
        \qarrow{4}{7};
        \qarrow{7}{9};
        \qarrow{9}{2};
        \qarrow{6}{2};
        \qarrow{9}{6};
        \qarrow{7}{1};
        \qarrow{6}{8};
    \end{scope}
    \end{tikzpicture}
    \caption{Braid group action $\bT_2$ for $n=3$}
    \label{fig:mu}
\end{figure}

\begin{lemma}
    There is a well-defined algebra automorphism $M_i:\tT_{S_i\Sigma_{n,i}}\rightarrow \tT_{\Sigma_n}$ between fractional fields of quantum torus algebras, given by
    \begin{equation*}
        X_{t,k}\mapsto\begin{cases}
            X_{i,0}C_i^{-1}, &\text{if }(t,k)=(i,0),\\
            q^{1/2}X_{i-1,0}X_{i,0}, &\text{if }(t,k)=(i-1,0),\\
            q^{1/2}X_{i,0}X_{i+1,0}X_{i+1,1}, &\text{if }(t,k)=(i+1,0),\\
            X_{t,k}, &\text{if otherwise.}
        \end{cases}
    \end{equation*}
\end{lemma}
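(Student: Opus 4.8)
The plan is to read the statement as a purely combinatorial assertion: the displayed rule sends each generator of the quantum torus $\T_{S_i\Sigma_{n,i}}$ to a Laurent monomial in the \emph{invertible} generators of $\T_{\Sigma_n}$, and it extends to a well-defined algebra map precisely when, for every pair of vertices $(s,m),(t,k)$ of $S_i\Sigma_{n,i}$, the images $M_i(X_{s,m})$ and $M_i(X_{t,k})$ $q$-commute with exactly the exponent prescribed by the adjacency matrix of $S_i\Sigma_{n,i}$. Granting this, such a homomorphism automatically extends to the skew-field $\tT_{S_i\Sigma_{n,i}}$ and is injective (all images are units). It is surjective, hence an isomorphism: since $C_i$ is central in $\T_{\Sigma_n}$ (see \eqref{eq:cen}) one computes $M_i(C_i)=C_i^{-1}$, whence $X_{i,0}=M_i(X_{i,0}C_i^{-1})$, and then $X_{i-1,0}$, $X_{i+1,0}$ are recovered from $M_i(X_{i-1,0})$, $M_i(X_{i+1,0})$ by the obvious monomial manipulation; together with the untouched generators this shows $\T_{\Sigma_n}$ lies in the image.

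So the whole proof reduces to a finite check of $q$-commutation relations, and this check can be localised. The map $M_i$ fixes every generator except $X_{i,0},X_{i-1,0},X_{i+1,0}$, and — as recorded just before the statement — the quivers $\Sigma_n$ and $S_i\Sigma_{n,i}$ differ only through the arrows incident to $X_{i-1,0},X_{i,0},X_{i+1,0},X_{i+1,1}$ (removal of $X_{i,1}\to X_{i-1,0}$, $X_{i+1,0}\to X_{i+1,1}$, $X_{i+1,n}\to X_{i+1,0}$; addition of $X_{i+1,1}\to X_{i-1,0}$ and $X_{i,n-1}\to X_{i+1,0}$; reversal of the dashed arrow between $X_{i,0}$ and $X_{i+1,0}$). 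Consequently any relation not involving one of the three modified generators is preserved for free, and one is left with: (i) the relations among $M_i(X_{i-1,0}),M_i(X_{i,0}),M_i(X_{i+1,0})$; and (ii) the relations of each of these three monomials with the finitely many generators that are neighbours, in either quiver, of $X_{i-1,0},X_{i,0},X_{i+1,0}$, namely $X_{i-2,0}$, $X_{i,1}$, $X_{i,n-1}$, $X_{i,n}=X_{i+1,1}$, $X_{i+1,n}$, $X_{i+2,0}$. Here one uses the relabelling \eqref{eq:relabel}, in particular $X_{i,n}=X_{i+1,1}$ and $X_{i,1}=X_{i-1,n}$, to see which variables actually occur.

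For the check itself the inputs are the commutation relations of $\T_{\Sigma_n}$ read off from the arrows incident to the frozen vertices $X_{j,0}$ (the weight-$\tfrac12$ dashed arrows $X_{j,0}\dasharrow X_{j+1,0}$, and the arrows linking $X_{j,0}$ to $X_{j,1}$ and to $X_{j,n}$), together with the centrality of $C_i$. Centrality immediately gives that $M_i(X_{i,0})=X_{i,0}C_i^{-1}$ $q$-commutes with every generator exactly as $X_{i,0}$ does in $\T_{\Sigma_n}$; the ``missing'' sign change around $X_{i,0}$ (the reversed dashed arrow in $S_i\Sigma_{n,i}$) is then produced by the extra factors $X_{i,0}$ and $X_{i+1,1}$ appearing in $M_i(X_{i+1,0})=q^{1/2}X_{i,0}X_{i+1,0}X_{i+1,1}$, which is verified by expanding $M_i(X_{s,m})M_i(X_{t,k})$, moving factors past one another, and reading off that the net exponent is twice the corresponding entry of the $S_i\Sigma_{n,i}$ adjacency matrix; the same computation settles $M_i(X_{i-1,0})=q^{1/2}X_{i-1,0}X_{i,0}$. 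The scalar $q^{1/2}$ is exactly what is needed so that these products are correctly normalised (equivalently, so the resulting exponents come out in $2\mathbb{Z}$), which is what makes the bookkeeping close up.

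The obstacle here is organisational rather than conceptual: one must extract the adjacency matrix of $S_i\Sigma_{n,i}$ accurately from the ``remove/add/reverse'' description, keep the dashed (weight $\tfrac12$) versus solid (weight $1$, or $2$ after amalgamation) conventions consistent, and treat the boundary indices $i=1$ (the clause $(t,k)=(i-1,0)$ and all arrows involving $X_{i-1,0}$ then disappear) and $i=n$ (the clause $(t,k)=(i+1,0)$ is vacuous), as well as the degenerate quivers $n=1,2$, separately — in each of these cases the verification is immediate. Once the adjacency data are pinned down, steps (i) and (ii) are short direct monomial computations in a quantum torus.
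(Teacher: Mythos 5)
Your proposal is correct and follows essentially the route the paper intends: the paper dispatches this lemma with the remark that it ``follows directly from the shape of the quiver $S_i\Sigma_{n,i}$,'' i.e.\ a direct verification that the monomial images satisfy the $q$-commutation relations prescribed by the adjacency matrix of $S_i\Sigma_{n,i}$, using the centrality of $C_i$ and the relabelling \eqref{eq:relabel}. Your localisation of the check to the three modified generators and their neighbours, and your surjectivity argument via $M_i(C_i)=C_i^{-1}$, are a faithful (and more explicit) account of that same verification.
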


For $1\leq i\leq n$, define the automorphism
    $$\cT_i=M_i\circ S_i\circ\widetilde{\mu}_i:\tT_{\Sigma_n}\longrightarrow\tT_{\Sigma_n}.$$

Recall the embedding $\iota$ in Theorem \ref{thm:emb}, and the braid group symmetries $\bT_i$ in Section \ref{sec:braid}.

\begin{theorem}\label{thm:braid}
For $1\leq i\leq n$, the automorphism $\cT_i$ restricts to $\tUi$ under the embedding $\iota$. Moreover it coincides with $T_i$ as defined in \eqref{braid}, when restricting to $\tUi$.
\end{theorem}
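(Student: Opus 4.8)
The plan is to verify the statement of Theorem~\ref{thm:braid} by explicit computation on the generators, just as the proof of Proposition~\ref{prop:emq} verified the defining relations on generators. Since $\cT_i$ is an algebra automorphism of $\tT_{\Sigma_n}$ (being a composition of cluster mutations, a permutation $S_i$ and the monomial rescaling $M_i$, each of which is an algebra homomorphism between the relevant fraction fields), and since $\tUi$ is generated by $B_j$ and $k_j^{\pm 1}$ for $1\le j\le n$, it suffices to check that $\cT_i(\iota(B_j))$ and $\cT_i(\iota(k_j))$ lie in $\iota(\tUi)$ and moreover agree with $\iota(\bT_i(B_j))$ and $\iota(\bT_i(k_j))$, where $\bT_i(B_j),\bT_i(k_j)$ are read off from \eqref{braid}. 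Because $B_j$ and $k_j$ commute with $B_{j'}$, $k_{j'}$ whenever $|j-j'|>1$, and because the sequence $\widetilde\mu_i$ and the maps $S_i$, $M_i$ only touch vertices $X_{t,k}$ with $t\in\{i-1,i,i+1\}$, the verification reduces to the three cases $j=i$, $j=i\pm1$, and $j$ far from $i$ (where everything is fixed and there is nothing to prove).

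\textbf{First} I would dispose of the easy pieces: $\cT_i(\iota(k_j))$. For $|i-j|>1$ this monomial involves only untouched vertices, so it is fixed, matching $\bT_i(k_j)=k_j$. For $j=i$, one computes directly from the formula for $M_i$ that $\cT_i(C_i)=M_i(X_{i,0}C_i^{-1}\cdot\!\text{(rest)})$; since the sequence $\widetilde\mu_i$ fixes $C_i$ (it is central and the mutation formulas multiply cluster monomials by binomials that cancel in products, a fact already used implicitly in Proposition~\ref{prop:qcl}), one gets $\cT_i(\iota(k_i))=-C_i^{-1}=\iota(k_i^{-1})=\iota(\bT_i(k_i))$. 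For $j=i\pm1$, the monomial $\iota(k_j)$ picks up the rescaling factors from $M_i$ at the vertices $X_{i-1,0}$ or $X_{i+1,0}$; a short bookkeeping with \eqref{eq:relabel} and the fact that $X_{i,n}=X_{i+1,1}$ should produce exactly $-k_ik_j$ up to the sign, matching \eqref{braid}. The central elements $C_j$ must be tracked carefully here since the renormalisation in \eqref{eq:em2} involves the square of a frozen variable.

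\textbf{The main obstacle} is the case $j=i$ (and, entangled with it, $j=i\pm1$) for $\iota(B_i)$: one must push the Laurent polynomial $\iota(B_i)=\sum_{k=0}^n P_{i,k}$ through the palindromic mutation sequence $\widetilde\mu_i=\mu_{i,2}\cdots\mu_{i,n}\cdots\mu_{i,2}$. My approach would be to compute $\widetilde\mu_i$-images step by step, reusing the one-step mutation formulas \eqref{eq:lau}, \eqref{eq:lau1} and \eqref{eq:sq}, \eqref{eq:qw} that already appear in the proof of Proposition~\ref{prop:qcl}; the key observation is that each $P_{i,k}$ telescopes nicely because consecutive $P$'s differ by a single variable, so the inner mutations successively ``peel off'' the chain. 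After the full sequence $\widetilde\mu_i$, applying the swap $S_i$ (interchanging $X_{i,1}\leftrightarrow X_{i,n}$) and then the monomial map $M_i$, the result should be a Laurent polynomial that, upon comparison, equals $\iota\!\big(-k_i^{-1}B_i\big)=-C_i^{-1}\iota(B_i)$. The interaction with $j=i\pm1$ is where the genuinely new content lies: one must check that $\cT_i(\iota(B_{i\pm1}))$ equals $\iota$ of the $q$-commutator $\frac{q^{1/2}B_iB_{i\pm1}-q^{-1/2}B_{i\pm1}B_i}{q-q^{-1}}$, and here the identity $P_{i,n-1}P_{i+1,1}=q^{-2}P_{i,n}P_{i+1,0}$ (from $X_{i,n}=X_{i+1,1}$, used already in Proposition~\ref{prop:emq}) together with the ``$q$-commutator $=$ renormalised product or $0$'' lemma from the proof of Proposition~\ref{prop:inj} will do the collapsing.

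\textbf{Finally}, having verified the generator identities, I would note that they immediately give $\cT_i(\iota(\tUi))\subseteq\iota(\tUi)$ and $\cT_i\circ\iota=\iota\circ\bT_i$ on generators, hence on all of $\tUi$ by multiplicativity (using injectivity of $\iota$ from Proposition~\ref{prop:inj} to transport the equality back to $\tUi$ itself); this completes the proof. As a sanity check one can also verify that $\cT_i$ is consistent with Proposition~\ref{prop:tci}: the composite $\cT_1\cT_2\cdots\cT_n$ should realise the cyclic symmetry of $\Sigma_n$ from Section~\ref{sec:rho}, giving an independent confirmation that the mutation sequences have been composed correctly.
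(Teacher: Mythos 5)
Your overall strategy---verifying $\cT_i\circ\iota=\iota\circ\bT_i$ on the generators $B_j$, $k_j$, splitting into the cases $j=i$, $j=i\pm1$, $|i-j|>1$, pushing $\iota(B_j)$ through the palindromic sequence $\widetilde{\mu}_i$ step by step, and collapsing $q$-commutators at the end---is essentially the approach the paper takes (via the explicit formulas of Lemma \ref{le:T} and the identities \eqref{eq:qcom}, \eqref{eq:com1}, \eqref{eq:com2}). However, there is a genuine gap in your treatment of the case $|i-j|>1$. You assert that $\widetilde{\mu}_i$, $S_i$ and $M_i$ ``only touch vertices $X_{t,k}$ with $t\in\{i-1,i,i+1\}$'' and conclude that $\iota(B_j)$ is fixed with nothing to prove. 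This premise is false: because of the self-amalgamation, the relabelling \eqref{eq:relabel} identifies $X_{j,j-i-1}=X_{i+1,n+i-j+2}$, $X_{j,j-i}=X_{i,n+i-j+1}$ and $X_{j,j-i+1}=X_{i-1,n+i-j}$, so three of the cluster variables occurring in $\iota(B_j)=\sum_k P_{j,k}$ lie on the loops through rows $i-1$, $i$, $i+1$ and are genuinely moved by $\cT_i$ (this is recorded in \eqref{eq:Tij}).

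Consequently the invariance of $\iota(B_j)$ for $|i-j|>1$ is not automatic. The paper must verify that the specific combinations $X_{j,j-i-1}X_{j,j-i}X_{j,j-i+1}$ and $X_{j,j-i-1}(1+qX_{j,j-i})$ are fixed (equations \eqref{eq:Tp} and \eqref{eq:Ts}), the latter resting on the identity $W_{i,n+i-j+1}+qW_{i,n+i-j-1}X_{j,j-i}=W_{i,n+i-j}(1+qX_{j,j-i})$, and then regroup $\iota(B_j)$ as in \eqref{eq:Bj} so that every summand is manifestly invariant; a parallel check is needed for $\iota(k_j)$. Without this computation your argument does not establish $\cT_i(\iota(B_j))=\iota(B_j)$, so the case you declare trivial is in fact one of the places where real work is required. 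The remaining parts of your outline (the cases $j=i$ and $j=i\pm1$, the use of injectivity of $\iota$ to transport the identity back to $\tUi$, and the consistency check against the cyclic symmetry) are sound and consistent with the paper's proof.
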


The classical counterpart of Theorem \ref{thm:braid} was obtained in \cite{CS23}*{Theorem 5.6}. 

Let us firstly compute the action of $\cT_i$ on generators $X_{t,k}$ of the quantum torus algebra. For any $1\leq i\leq n$ and $0\leq k\leq n$, we set
    \begin{equation}\label{eq:W}
     W_{i,k}=\sum_{t=0}^kq^tX_{i,0}X_{i,1}\cdots X_{i,t}.
    \end{equation}

The following lemma is the quantum analogue of \cite{CS23}*{Lemma 5.5}. 

\begin{lemma}\label{le:T}
    Suppose $n\geq 3$. For $1\leq i\leq n$, we have
    \begin{equation}\label{eq:T1}
        \cT_i(X_{i,k})=\begin{cases}
            W_{i,n-1}C_i^{-1}, &\text{if }k=0,\\
            q^{-1}W_{i,n-1}^{-1}X_{i,0}^{-1}(1+qX_{i,1}^{-1})^{-1}C_i, &\text{if }k=1,\\
            W_{i,k}^{-1}X_{i,k}W_{i,k-2}=W_{i,k}^{-1}W_{i,k-2}X_{i,k}, &\text{if }2\leq k\leq n-1,\\
            X_{i,0}X_{i,n}W_{i,n-2}C_i^{-1},& \text{if }k=n.
        \end{cases}
    \end{equation}
    For $1<i\leq n$, we have
    \begin{equation}\label{eq:T2}
        \cT_i(X_{i-1,k})=\begin{cases}
            q^{1/2}X_{i-1,0}X_{i,0}, &\text{if }k=0,\\
            W_{i,k-1}^{-1}X_{i-1,k}W_{i,k}=X_{i-1,k}W_{i,k-1}^{-1}W_{i,k}, &\text{if }1\leq k\leq n-2,\\
            (1+q^{-1}X_{i,1})X_{i-1,n-1}W_{i,n-2}^{-1}W_{i,n-1}, &\text{if }k=n-1,\\
            q^{-1}W_{i,n-1}^{-1}X_{i,0}^{-1}(1+qX_{i,1}^{-1})^{-1}C_i,&\text{if }k=n.
        \end{cases}
    \end{equation}
    For $1\leq i<n$, we have
    \begin{equation}\label{eq:T3}
        \cT_i(X_{i+1,k})=\begin{cases}
           q^{-1/2}W_{i,n-1}^{-1}X_{i+1,0}C_i, &\text{if }k=0,\\
           X_{i,0}X_{i+1,1}W_{i,n-2}C_i^{-1},&\text{if }k=1,\\
           (1+q^{-1}X_{i,1})X_{i+1,2}W_{i,n-2}^{-1}W_{i,n-1},&\text{if }k=2,\\
            W_{i,k-2}^{-1}X_{i+1,k}W_{i,k-1}=X_{i+1,k}W_{i,k-2}^{-1}W_{i,k-1}, &\text{if }3\leq k\leq n.
        \end{cases}
    \end{equation}
And $\cT_i$ fixes $X_{j,k}$, if $X_{j,k}$ is not one of the above generators.
\end{lemma}

\begin{proof}
We start by introducing some polynomials in the quantum torus algebra, which play important role in our computation. 

Fix an integer $i\in [1,n]$. For $2\leq k<n$, define the element
\begin{equation}
    A_k=A_{k,i}=1+qX_{i,n}+q^2X_{i,n}X_{i,2}+\cdots+q^kX_{i,n}X_{i,2}\cdots X_{i,k}\quad \text{in }\tT_{\Sigma_{n,i}}.
\end{equation}
By direct computation, one has
\begin{equation}
    X_k^{-1}+\cdots+q^{k-2}X_2^{-1}\cdots X_{k}^{-1}=q(A_k-A_{k-1})^{-1}(A_{k-1}-1),
\end{equation}
and
\begin{equation}\label{eq:ak1}
    (A_{k-1}-1)(A_k-A_{k-1})=q^{-2}(A_k-A_{k-1})(A_{k-1}-1).
\end{equation}

We firstly compute $\cT_i(X_{i,k})$, for $2\leq k\leq n-1$. 

Consider the $(k-1)$-step of mutations $\mu_{i,2}\cdots \mu_{i,k}:\tT_{\Sigma_{n,i}^{k-1}}\longrightarrow \tT_{\Sigma_{n,i}}.$ We have
\begin{equation}\label{eq:mukn}
    \begin{split}
         \mu_{i,2}\cdots\mu_{i,k}(X_{i,n})&=\mu_{i,2}\cdots\mu_{i,k-1}(X_{i,n}(1+qX_{i,k}))\\
         &=\mu_{i,2}\cdots\mu_{i,k-2}(X_{i,n}(1+qX_{i,k-1}(1+qX_{i,k}(1+qX_{i,k-1}^{-1})^{-1})))\\
         &=\mu_{i,2}\cdots\mu_{i,k-2}(X_{i,n}(1+qX_{i,k-1}+q^2X_{i,k-1}X_{i,k}))\\
         &=\cdots\cdots\\
         &=X_{i,n}(1+qX_{i,2}+q^2X_{i,2}X_{i,3}+\cdots+q^{k-1}X_{i,2}\cdots X_{i,k})\\
         &=q^{-1}(A_k-1).
    \end{split}
\end{equation}

We also have
\begin{equation}\label{eq:mukk}
    \begin{split}
        \mu_{i,2}\cdots\mu_{i,k}(X_{i,k})&=\mu_{i,2}\cdots\mu_{i,k-1}(X_{i,k}^{-1})\\
        &=\mu_{i,2}\cdots\mu_{i,k-2}((X_{i,k}(1+qX_{i,k-1}^{-1})^{-1})^{-1})\\
        &=\mu_{i,2}\cdots\mu_{i,k-2}(X_{i,k}^{-1}+qX_{i,k-1}^{-1}X_{i,k}^{-1})\\
        &=\cdots\cdots\\
        &=X_{i,k}^{-1}+qX_{i,k-1}^{-1}X_{i,k}^{-1}+\cdots+q^{k-2}X_{i,2}^{-1}\cdots X_{i,k}^{-1}\\
        &=q(A_k-A_{k-1})^{-1}(A_{k-1}-1).
    \end{split}
\end{equation}

Consider the $(2n-k-2)$-step mutations
$\mu_{i,2}\cdots\mu_{i,n}\cdots\mu_{i,k+1}:\tT_{\Sigma_{n,i}^{2n-k-2}}\longrightarrow\tT_{\Sigma_{n,i}}.$ We have
\begin{equation}\label{eq:muk1k}
    \begin{split}
        &\mu_{i,2}\cdots\mu_{i,n}\cdots\mu_{i,k+1}(X_{i,k})\\=&\mu_{i,2}\cdots\mu_{i,n}\cdots\mu_{i,k+2}(X_{i,k}(1+qX_{i,k+1}))\\=&\cdots\cdots\\
        =&\mu_{i,2}\cdots\mu_{i,n-1}(X_{i,k}(1+\cdots(1+qX_{i,n-2}(1+qX_{i,n-1}(1+qX_{i,n})))\cdots ))\\
        =&\mu_{i,2}\cdots\mu_{i,n-2}(X_{i,k}(1+\cdots\\&(1+qX_{i,n-2}(1+qX_{i,n-1}^{-1})^{-1}(1+qX_{i,n-1}^{-1}(1+qX_{i,n}(1+qX_{i,n-1})))\cdots))\\
        =&\mu_{i,2}\cdots\mu_{i,n-2}(X_{i,k}(1+\cdots(1+qX_{i,n-2}(1+qX_{i,n}))\cdots))\\
        =&\cdots\cdots\\
        =&\mu_{i,2}\cdots\mu_{i,k}(X_{i,k}(1+qX_{i,n}))
        \\=&q(A_k-A_{k-1})^{-1}(A_{k-1}-1)A_k,
    \end{split}
\end{equation}
where the last equality follows from \cref{eq:mukk,eq:mukn}.

Then we have
\begin{equation}\label{eq:muik}
    \begin{split}
        \widetilde{\mu}_i(X_{i,k})&=\mu_{i,2}\cdots\mu_{i,n}\cdots\mu_{i,2}(X_{i,k})\\
        &=\mu_{i,2}\cdots\mu_{i,n}\cdots\mu_{i,k-1}(X_{i,k})\\
        &=\mu_{i,2}\cdots\mu_{i,n}\cdots\mu_{i,k}(X_{i,k}(1+qX_{i,k-1}))\\
        &=\mu_{i,2}\cdots\mu_{i,n}\cdots \mu_{i,k+1}(X_{i,k})^{-1} (1+q\mu_{i,2}\cdots\mu_{i,n}\cdots\mu_{i,k}(X_{i,k-1}))\\
        &\overset{(\heartsuit1)}{=}q^{-1}A_k^{-1}(A_{k-1}-1)^{-1}(A_k-A_{k-1})(1+q^2(A_{k-1}-A_{k-2})^{-1}(A_{k-2}-1)A_{k-1})\\
        &\overset{(\heartsuit2)}{=}q^{-1}A_k^{-1}(A_{k-1}-1)^{-1}(A_k-A_{k-1})(A_{k-1}-A_{k-2})^{-1}(A_{k-1}-A_{k-2}+q^2(A_{k-2}-1)A_{k-1})\\
        &\overset{(\heartsuit3)}{=}q^{-2}A_k^{-1}(A_{k-1}-1)^{-1}X_k((A_{k-1}-A_{k-2})A_{k-2}+q^2(A_{k-2}-1)A_{k-2})\\
        &=A_k^{-1}(A_{k-1}-1)^{-1}((A_{k-1}-A_{k-2})A_{k-2}+(A_{k-2}-1)A_{k-2})X_k\\
        &=A_k^{-1}A_{k-2}X_{i,k}.
    \end{split}
\end{equation}
Here $(\heartsuit1)$ follows from \eqref{eq:muk1k}, $(\heartsuit2)$ follows from \eqref{eq:ak1}, and $(\heartsuit3)$ follows from $X_k(A_{k-1}-A_{k-2})=q^2(A_{k-1}-A_{k-2})X_k$ and $X_kA_{k-2}=A_{k-2}X_k$.



It then follows that 
\begin{equation}
    \cT_i(X_{i,k})=M_i\circ S_i\circ \widetilde{\mu}_i(X_{i,k})=W_{i,k}^{-1}X_{i,k}W_{i,k-2}=W_{i,k}^{-1}W_{i,k-2}X_{i,k}.
\end{equation}
The last equality is obtained by noticing that $X_{i,k}$ commutes with every term in the summation of $W_{i,k-2}$.

We next compute $\cT_i(X_{i,0})$. We have
\begin{equation*}
    \begin{split}
        \widetilde{\mu}_i(X_{i,0})&=\mu_{i,2}\cdots \mu_{i,n}\cdots\mu_{i,2}(X_{i,0})\\
        &=\mu_{i,2}\cdots\mu_{i,n-1}(X_{i,0}(1+qX_{i,n}))\\
        &=X_{i,0}(1+qX_{i,n}+q^2X_{i,n}X_{i,2}+\cdots +q^{n-1}X_{i,n}X_{i,2}\cdots X_{i,n-1}).
    \end{split}
\end{equation*}
Here the last equality follows from \eqref{eq:mukn} and the equality
$$\mu_{i,2}\cdots\mu_{i,n-1}(X_{i,0})=X_{i,0}.$$
Hence we have
\begin{equation}
\begin{split}
    \cT_i(X_{i,0})&=M_i\circ S_i\circ\widetilde{\mu}_i(X_{i,0})\\&=M_i(X_{i,0}(1+qX_{i,n}+q^2X_{i,1}X_{i,2}+\cdots +q^{n-1}X_{i,1}X_{i,2}\cdots X_{i,n-1}))\\
    &=X_{i,0}C_i^{-1}(1+qX_{i,n}+q^2X_{i,1}X_{i,2}+\cdots +q^{n-1}X_{i,1}X_{i,2}\cdots X_{i,n-1})\\
    &=W_{i,n-1}C_i^{-1}.
\end{split}
\end{equation}

We next compute $\cT_i(X_{i,n})$. Firstly one has
\begin{equation}\label{eq:tmui}
    \begin{split}
        \widetilde{\mu}_i(X_{i,n})&=\mu_{i,2}\cdots \mu_{i,n}\cdots\mu_{i,2}(X_{i,n})\\&=\mu_{i,2}\cdots\mu_{i,n}((1+q^{-1}X_{i,n-1})X_{i,n})\\&=(1+q^{-1}\mu_{i,2}\cdots\mu_{i,n}(X_{i,n-1}))\cdot\mu_{i,2}\cdots\mu_{i,n-1}(X_{i,n})^{-1}\\
        &=q(1+(A_{n-1}-A_{n-2})^{-1}(A_{n-2}-1)A_{n-1})(A_{n-1}-1)^{-1}\\
        &=q(A_{n-1}-A_{n-2})^{-1}A_{n-2}
    \end{split}
\end{equation}
The last equality follows from \cref{eq:mukn,eq:muk1k}. 

Hence 
\begin{equation}
    \cT_i(X_{i,n})=X_{i,n-1}^{-1}+\cdots +q^{n-2}X_{i,1}^{-1}X_{i,2}^{-1}\cdots X_{i,n-1}^{-1}=X_{i,0}X_{i,n}W_{i,n-2}C_i^{-1}.
\end{equation}

We next compute $\cT_i(X_{i,1})$. One has
\begin{equation}\label{eq:tmuiXi1}
\begin{split}
    \widetilde{\mu}_i(X_{i,1})&=\mu_{i,2}\cdots\mu_{i,n}\cdots\mu_{i,2}(X_{i,1})\\
    &=\mu_{i,2}\cdots\mu_{i,n}\cdots\mu_{i,3}(X_{i,1}(1+qX_{i,2}^{-1})^{-1})\\&=\mu_{i,2}\cdots\mu_{i,n}\cdots\mu_{i,4}(X_{i,1}(1+qX_{i,3}^{-1}))\cdot (1+q\mu_{i,2}\cdots\mu_{i,n}\cdots\mu_{i,3}(X_{i,2}^{-1}))^{-1}\\&=\cdots\cdots\\&=\mu_{i,2}\cdots\mu_{i,n}(X_{i,1})V_{n-1}\cdots V_2\\&=\mu_{i,2}\cdots\mu_{i,n-1}((1+q^{-1}X_{i,n}^{-1})^{-1}X_{i,1}(1+qX_{i,n}^{-1})^{-1})V_{n-1}\cdots V_2\\
    &=(1+q^{-1}\mu_{i,2}\cdots\mu_{i,n-1}(X_{i,n}^{-1}))^{-1}\cdot\mu_{i,2}\cdots\mu_{i,n-2}((1+q^{-1}X_{i,n-1}^{-1})^{-1}X_{i,1})\\
    &\cdot (1+q\mu_{i,2}\cdots\mu_{i,n-1}(X_{i,n}^{-1}))^{-1}V_{n-1}\cdots V_2\\
    &=(1+q^{-1}\mu_{i,2}\cdots\mu_{i,n-1}(X_{i,n}^{-1}))^{-1}(1+q^{-1}\mu_{i,2}\cdots\mu_{i,n-2}(X_{i,n-1}^{-1}))^{-1}X_{i,1}\\&\cdot (1+q\mu_{i,2}\cdots\mu_{i,n-1}(X_{i,n}^{-1}))^{-1}V_{n-1}\cdots V_2,
\end{split}    
\end{equation}
where 
\begin{equation*}
    V_k=(1+q\mu_{i,2}\cdots\mu_{i,n}\cdots\mu_{i,k+1}(X_{i,k}^{-1}))^{-1},\quad\text{for }2\leq k\leq n.
\end{equation*}

By \cref{eq:mukk,eq:mukn} we have 
\begin{equation*}
    \begin{split}
        1+q^{-1}\mu_{i,2}\cdots\mu_{i,n-1}(X_{i,n}^{-1})=1+(A_{n-1}-1)^{-1}=(A_{n-1}-1)^{-1}A_{n-1},
    \end{split}
\end{equation*}
and
\begin{equation*}
    \begin{split}
        1+q^{-1}\mu_{i,2}\cdots\mu_{i,n-2}(X_{i,n-1}^{-1})&=1+(A_{n-1}-A_{n-2})^{-1}(A_{n-2}-1)\\&=(A_{n-1}-A_{n-2})^{-1}(A_{n-1}-1),
    \end{split}
\end{equation*}

Therefore we have
\begin{equation}\label{eq:aa1}
    (1+q^{-1}\mu_{i,2}\cdots\mu_{i,n-1}(X_{i,n}^{-1}))^{-1}(1+q^{-1}\mu_{i,2}\cdots\mu_{i,n-2}(X_{i,n-1}^{-1}))^{-1}=A_{n-1}^{-1}(A_{n-1}-A_{n-2}).
\end{equation}

We next compute the product $V_{n}\cdots V_2$. By \eqref{eq:muk1k} we have
\begin{equation*}
\begin{split}
    1+q\mu_{i,2}\cdots\mu_{i,n}\cdots\mu_{i,k+1}(X_{i,k}^{-1})&=1+A_k^{-1}(A_{k-1}-1)^{-1}(A_k-A_{k-1})\\&=A_k^{-1}(A_{k-1}-1)^{-1}A_{k-1}(A_k-1),
\end{split}
\end{equation*}
which yields to
\begin{equation*}
    V_k=(A_k-1)^{-1}A_{k-1}^{-1}(A_{k-1}-1)A_k.
\end{equation*}

Then by \eqref{eq:ak1}, we have
\begin{equation*}
\begin{split}
    V_k&=(A_k-1)^{-1}A_{k-1}^{-1}(A_{k-1}+q^{-2}(A_k-A_{k-1}))(A_{k-1}-1)\\&=(A_k-1)^{-1}(1+q^{-2}A_{k-1}^{-1}(A_k-A_{k-1}))(A_{k-1}-1)\\&=(A_k-1)^{-1}(1+q^{-2}(A_k-A_{k-1})(1+q^{-2}(A_{k-1}-1))^{-1})(A_{k-1}-1)\\&=(A_k-1)^{-1}(1+q^{-2}(A_k-1))(1+q^{-2}(A_{k-1}-1))^{-1}(A_{k-1}-1)\\&=((A_k-1)^{-1}+q^{-2})((A_{k-1}-1)^{-1}+q^{-2})^{-1}.
\end{split}
\end{equation*}

Therefore 
\begin{equation}\label{eq:aa2}
    V_{n-1}\cdots V_2=((A_{n-1}-1)^{-1}+q^{-2})(q^{-1}X_{i,n}^{-1}+q^{-2})^{-1}.
\end{equation}

By \eqref{eq:mukn} we have
\begin{equation}\label{eq:aa3}
\begin{split}
    1+q\mu_{i,2}\cdots\mu_{i,n-1}(X_{i,n}^{-1})&=1+q^2(A_{n-1}-1)^{-1}.
\end{split}
\end{equation}

Plug equations \eqref{eq:aa1}, \eqref{eq:aa2} and \eqref{eq:aa3} into \eqref{eq:tmuiXi1}. We get
\begin{equation*}
    \begin{split}
        \widetilde{\mu}_i(X_{i,1})=A_{n-1}^{-1}(A_{n-1}-A_{n-2})X_{i,1}(1+qX_{i,n}^{-1})^{-1}.
    \end{split}
\end{equation*}

Therefore we have
\begin{equation}
    \cT_i(X_{i,1})=W_{i,n-1}^{-1}(W_{i,n-1}-W_{i,n-2})X_{i,n}(1+qX_{i,1}^{-1})^{-1}=q^{-1}W_{i,n-1}^{-1}X_{i,0}^{-1}(1+qX_{i,1}^{-1})^{-1}C_i.
\end{equation}

Next we assume $1<i\leq n$. We firstly compute $\cT_i(X_{i-1,0})$. Notice that in the quiver $\Sigma_n$ the vertex $X_{i-1,0}$ is not connected with vertices $X_{i,t}$ for $2\leq t\leq n$. Hence we have
\begin{equation}
    \cT_i(X_{i-1,0})=M_i(X_{i-1,0})=q^{1/2}X_{i-1,0}X_{i,0}.
\end{equation}

We next compute $\cT_i(X_{i-1,k})$, for $1\leq k\leq n-2$. In this case, one has
\begin{equation}\label{eq:tmui1}
    \begin{split}
        \widetilde{\mu}_i(X_{i-1,k})&=\mu_{i,2}\cdots\mu_{i,n}\cdots\mu_{i,2}(X_{i-1,k})\\
        &=\mu_{i,2}\cdots\mu_{i,n}\cdots\mu_{i,k+1}(X_{i-1,k}(1+qX_{i,k}^{-1})^{-1})\\
        &=X_{i-1,k}(1+q\mu_{i,2}\cdots\mu_{i,k-1}(X_{i,k}))(1+q\mu_{i,2}\cdots\mu_{i,n}\cdots\mu_{i,k+1}(X_{i,k})^{-1})^{-1}\\
        &\overset{(\heartsuit)}{=}X_{i-1,k}(1+(A_{k-1}-1)^{-1}(A_k-A_{k-1}))(1+A_k^{-1}(A_{k-1}-1)^{-1}(A_k-A_{k-1}))^{-1}\\
        &=X_{i-1,k}(A_{k-1}-1)^{-1}(A_k-1)(A_k-1)^{-1}A_{k-1}^{-1}(A_{k-1}-1)A_k\\
        &=X_{i-1,k}A_{k-1}^{-1}A_k.
    \end{split}
\end{equation}
Here $(\heartsuit)$ follows from \eqref{eq:mukk} and \eqref{eq:muk1k}.

It then follows that
\begin{equation}
    \cT_i(X_{i-1,k})=M_i\circ S_i\circ\widetilde{\mu}_i(X_{i-1,k})=X_{i-1,k}W_{i,k-1}^{-1}W_{i,k}=W_{i,k-1}^{-1}X_{i-1,k}W_{i,k}.
\end{equation}
The last equality follows by noticing that $X_{i-1,k}$ commutes with every terms in the summation of $W_{i,k-1}$.

We next compute $\cT_i(X_{i-1,n-1})$. One has 
\begin{equation}\label{eq:tmuin1}
\begin{split}
    \widetilde{\mu}_i(X_{i-1,n-1})&=\mu_{i,2}\cdots\mu_{i,n}\cdots\mu_{i,2}(X_{i-1,n-1})\\&=\mu_{i,2}\cdots\mu_{i,n}\cdots\mu_{i,3}((1+q^{-1}X_{i,2})X_{i-1,n-1})\\&=(1+q^{-1}\mu_{i,2}\cdots\mu_{i,n}\cdots\mu_{i,3}(X_{i,2}))\cdot\mu_{i,2}\cdots\mu_{i,n}(X_{i-1,n-1}(1+qX_{i,n-1}^{-1})^{-1}) \\&=(1+q^{-1}\mu_{i,2}\cdots\mu_{i,n}\cdots\mu_{i,3}(X_{i,2}))\cdot\mu_{i,2}\cdots\mu_{i,n-2}(X_{i-1,n-1}(1+qX_{i,n-1}))\\&\cdot (1+q\mu_{i,2}\cdots\mu_{i,n}(X_{i,n-1}^{-1}))^{-1} \\
    &=(1+q^{-1}\mu_{i,2}\cdots\mu_{i,n}\cdots\mu_{i,3}(X_{i,2})) (1+q^{-1}X_{i,2}^{-1})^{-1} X_{i-1,n-1}\\&\cdot (1+q\mu_{i,2}\cdots\mu_{i,n-2}(X_{i,n-1})) (1+q\mu_{i,2}\cdots\mu_{i,n}(X_{i,n-1}^{-1}))^{-1}.
\end{split}
\end{equation}

Thanks to \cref{eq:mukk,eq:muk1k} we have
\begin{equation*}
    \begin{split}
        1+q^{-1}\mu_{i,2}\cdots\mu_{i,n}\cdots\mu_{i,3}(X_{i,2})&=1+q^{-1}X_{i,2}^{-1}(1+qX_{i,n}+q^2X_{i,n}X_{i,2})\\&=(1+q^{-1}X_{i,n})(1+q^{-1}X_{i,2}^{-1}),
    \end{split}
\end{equation*}
\begin{equation*}
\begin{split}
1+q\mu_{i,2}\cdots\mu_{i,n-2}(X_{i,n-1})&=1+q(X_{i,n-1}^{-1}+\cdots +q^{n-3}X_{i,2}^{-1}\cdots X_{i,n-1}^{-1})^{-1}\\&=1+(A_{n-2}-1)^{-1}(A_{n-1}-A_{n-2})\\&=(A_{n-2}-1)^{-1}(A_{n-1}-1),
\end{split}
\end{equation*}
and
\begin{equation*}
    \begin{split}
        &1+q\mu_{i,2}\cdots\mu_{i,n}(X_{i,n-1}^{-1})\\=&1+q(1+qX_{i,n}+\cdots+q^{n-1}X_{i,n}X_{i,2}\cdots X_{i,n-1})^{-1}(X_{i,n-1}^{-1}+q^{n-3}X_{i,2}\cdots X_{i,n-1}^{-1})^{-1}\\=&1+A_{n-1}^{-1}(A_{n-2}-1)^{-1}(A_{n-1}-A_{n-2})\\=&A_{n-1}^{-1}(A_{n-2}-1)^{-1}A_{n-2}(A_{n-1}-1).
    \end{split}
\end{equation*}

Plug into \eqref{eq:tmuin1}. We have
\begin{equation*}
    \widetilde{\mu}_i(X_{i-1,n-1})=(1+q^{-1}X_{i,n})X_{i-1,n-1}A_{n-2}^{-1}A_{n-1},
\end{equation*}
which yields to
\begin{equation}\label{eq:tii1}
    \cT_i(X_{i-1,n-1})=(1+q^{-1}X_{i,1})X_{i-1,n-1}W_{i,n-2}^{-1}W_{i,n-1}.
\end{equation}

The formula for $\cT_i(X_{i-1,n})$ follows from the identity $X_{i-1,n}=X_{i,1}$ and the previous computation.

Next assume $1\leq i<n$. We compute $\cT_i(X_{i+1,0})$. One has
\begin{equation}\label{eq:tmuii1}
    \begin{split}
        \widetilde{\mu}_i(X_{i+1,0})&=\mu_{i,2}\cdots\mu_{i,n}\cdots\mu_{i,2}(X_{i+1,0})\\&=\mu_{i,2}\cdots\mu_{i,n}(X_{i+1,0})\\&=\mu_{i,2}\cdots\mu_{i,n-1}((1+q^{-1}X_{i,n}^{-1})^{-1}X_{i+1,0})\\&=(1+q^{-1}\mu_{i,2}\cdots\mu_{i,n-1}(X_{i,n}^{-1}))^{-1}(1+q^{-1}\mu_{i,2}\cdots\mu_{i,n-2}(X_{i,n-1}^{-1}))^{-1}X_{i+1,0}\\
        &\overset{(\heartsuit)}{=}(1+(A_{n-1}-1)^{-1})^{-1}(1+(A_{n-1}-A_{n-2})^{-1}(A_{n-2}-1))^{-1}X_{i+1,0}\\
        &=A_{n-1}^{-1}(A_{n-1}-A_{n-2})X_{i+1,0}.
    \end{split}
\end{equation}
The equality $(\heartsuit)$ follows from \eqref{eq:mukn} and \eqref{eq:mukk}.

It then follows that
\begin{equation*}
    \cT_i(X_{i+1,0})=q^{-1/2}W_{i,n-1}^{-1}X_{i+1,0}C_i.
\end{equation*}

To verify the formula for $\cT_i(X_{i+1,k})$ ($2<k\leq n$), we notice that for $k\neq n$ the computation is identical with $\cT_i(X_{i-1,k-1})$, since the two vertices are in the same relative position with respect to mutations involved. The formula for $\cT_i(X_{i+1,n})$ is not considered in the previous computation, but it can be similarly obtained.

The formula for $\cT_i(X_{i+1,k})$, $k=1,2$, follows from the identities $X_{i+1,1}=X_{i,n}$ and $X_{i+1,2}=X_{i-1,n-1}$, and previous computation.

If $X_{t,k}$ is none of the above generators, the corresponding vertex is disconnected from $X_{i,k}$ for $0\leq k\leq n$. Therefore it is fixed by $\cT_i$. We complete the proof.
\end{proof}

Let us mention that in Lemma \ref{le:T} we exclude the case when $n=1$ or $2$, because in these two cases, the quivers are too folded. For example, when $n=1$, the quiver $\Sigma_1$ is disconnected, and when $n=2$, the quiver $\Sigma_2$ has double edges.   

We next prove Theorem \ref{thm:braid}.

\begin{proof}[Proof of Theorem \ref{thm:braid}]

The case when $n=1$ or $2$ can be verified by the straightforward computation, which we will skip. 

Let us assume $n\geq 3$. Take $1\leq i,j\leq n$ with $|i-j|>1$. Suppose firstly $i<j$. By \eqref{eq:relabel} we have
$$X_{j,j-i-1}=X_{i+1,n+i-j+2},\quad X_{j,j-i}=X_{i,n+i-j+1},\quad \text{and }X_{j,j-i+1}=X_{i-1,n+i-j}.$$
Thanks to Lemma \ref{le:T}, we have
\begin{equation}\label{eq:Tij}
    \cT_i(X_{j,t})=\begin{cases}
        W_{i,n+i-j}^{-1}X_{j,j-i-1}W_{i,n+i-j+1}, &\text{if }t=j-i-1,\\
        W_{i,n+i-j+1}^{-1}X_{j,j-i}W_{i,n+i-j-1}, &\text{if }t=j-i,\\
        W_{i,n+i-j-1}^{-1}X_{j,j-i+1}W_{i,n+i-j}, &\text{if }t=j-i+1,\\
        X_{j,t}, &\text{if }t\notin \{j-i-1,j-i,j-i+1\}.
    \end{cases}
\end{equation}
Notice that $W_{i,n+i-j}$ commutes with monomial $X_{j,j-i}X_{j,j-i+1}$. We have
    \begin{equation}\label{eq:Tp}
    \begin{split}
        &\cT_i(X_{j,j-i-1}X_{j,j-i}X_{j,j-i+1})\\=&W_{i,n+i-j}^{-1}X_{j,j-i-1}X_{j,j-i}X_{j,j-i+1}W_{i,n+i-j}=X_{j,j-i-1}X_{j,j-i}X_{j,j-i+1}.
    \end{split}
    \end{equation}
It follows from the definition \eqref{eq:W} that
\begin{equation*}
\begin{split}
    W_{i,n+i-j+1}-W_{i,n+i-j}=&q^{n+i-j+1}X_{i,0}\cdots X_{i,n+i-j+1}\\=&q(W_{i,n+i-j}-W_{i,n+i-j-1})X_{i,n+i-j+1},
\end{split}
\end{equation*}
which yields to
\begin{equation*}
    W_{i,n+i-j+1}+qW_{i,n+i-j-1}X_{j,j-i}=W_{i,n+i-j}(1+qX_{j,j-i}).
\end{equation*}

Therefore 
\begin{equation}\label{eq:Ts}
\begin{split}
    &\cT_i(X_{j,j-i-1}(1+qX_{j,j-i}))\\=&X_{j,j-i-1}W_{i,n+i-j}^{-1}(W_{i,n+i-j+1}+qW_{i,n+i-j-1}X_{j,j-i})\\=&X_{j,j-i-1}(1+qX_{j,j-i}).
\end{split}
\end{equation}

We express $\iota(B_j)$ as following
\begin{equation}\label{eq:Bj}
\begin{split}
    \iota(B_j)&=\sum_{t=0}^{j-i-2}q^tX_{j,0}\cdots X_{j,t}+q^{j-i-1}X_{j,0}\cdots X_{j,j-i-2}(X_{j,j-i-1}(1+qX_{j,j-i}))\\&+q^{j-i+1}X_{j,0}\cdots X_{j,j-i-2}X_{j,j-i-1}X_{j,j-i}X_{j,j-i+1}\\&\cdot(1+qX_{j,j-i+2}+\cdots q^{n-j+i-1}X_{j,j-i+2}\cdots X_{j,n}).
\end{split}
\end{equation}
Thanks to \cref{eq:Tij,eq:Tp,eq:Ts}, each term on the right hand side of \eqref{eq:Bj} is fixed by $\cT_i$. 

Similarly one can show $\iota(k_j)$ is fixed by $\cT_i$. Therefore we deduce that for $|i-j|>1$ and $i<j$ one has
\begin{equation}\label{eq:cTBij}
    \cT_i(\iota(B_j))=B_j=\iota(\bT_i(B_j)),\quad\text{and}\quad \cT_i(\iota(k_j))=k_j=\iota(\bT_i(k_j)).
\end{equation} 

The case when $|i-j|>1$ with $i>j$ can be verified in a similar way. In summary we deduce that \eqref{eq:cTBij} holds whenever $|i-j|>1$.

Suppose $1< i\leq n$, we next compute $\cT_i(\iota(B_{i-1}))$ and $\cT_i(\iota(k_{i-1}))$. Thanks to equation \eqref{eq:T2}, we have 
\begin{equation}\label{eq:tik}
    \cT_i(P_{i-1,k})=q^{1/2}P_{i-1,k}W_{i,k}=\sum_{t=0}^k\ren{P_{i-1,k}P_{i,t}},\quad \text{for $0\leq k\leq n-2$.}
\end{equation}
We also have
\begin{equation}\label{eq:tip}
\begin{split}
    \cT_i(P_{i-1,n-1})=&\cT_i(qP_{i-1,n-2}X_{i-1,n-1})\\=&q^{3/2}P_{i-1,n-2}W_{i,n-2}(1+q^{-1}X_{i,1})X_{i-1,n-1}W_{i,n-2}^{-1}W_{i,n-1}.
\end{split}
\end{equation}

Let us write
\[
W_{i,n-2}=X_{i,0}(1+qX_{i,1}+R),
\]
where
\[
R=q^2X_{i,1}X_{i,2}+\cdots +q^{n-2}X_{i,1}\cdots X_{i,n-2}.
\]

Then
\begin{equation}\label{eq:wxx}
    \begin{split}
        W_{i,n-2}(1+q^{-1}X_{i,1})X_{i-1,n-1}=&X_{i,0}(1+qX_{i,1}+R)(1+q^{-1}X_{i,1})X_{i-1,n-1}\\=&X_{i-1,n-1}X_{i,0}(1+q^3X_{i,1}+R)(1+qX_{i,1})\\=&X_{i-1,n-1}X_{i,0}(1+q^3X_{i,1})(1+qX_{i,1}+R)\\=&X_{i-1,n-1}(1+qX_{i,1})W_{i,n-2}.
    \end{split}
\end{equation}

Plug into \eqref{eq:tip}. We have
\begin{align}
    \cT_i(P_{i-1,n-1})=&q^{3/2}P_{i-1,n-2}X_{i-1,n-1}(1+qX_{i,1})W_{i,n-1}\label{eq:Tipin}\\=&\ren{P_{i-1,n-1}P_{i,0}}+(q+q^{-1})\ren{P_{i-1,n-1}P_{i,1}}\notag\\
    +&\sum_{t=2}^{n-1}\ren{P_{i-1,n-1}P_{i,t}}+\sum_{t=1}^{n-1}\ren{P_{i-1,n}P_{i,t}}.\label{eq:psn}
\end{align}

By \eqref{eq:T2} and \eqref{eq:Tipin}, we deduce that
\begin{equation}\label{eq:tin}
\begin{split}
    \cT_i(P_{i-1,n})=&\cT_i(qP_{i-1,n-1}X_{i-1,n})\\
    =&q^{3/2}P_{i-1,n-2}X_{i-1,n-1}X_{i,1}X_{i,0}^{-1}C_i\\
    =&\ren{P_{i-1,n}P_{i,n}}.
\end{split}
\end{equation}

Therefore by \eqref{eq:tik}, \eqref{eq:psn} and \eqref{eq:tin}, we have 
\begin{equation*}
\begin{split}
     \cT_i(\iota(B_{i-1}))=&\cT_i(\sum_{k=0}^nP_{i-1,k})\\
     =&\sum_{k=0}^{n-2}\sum_{t=0}^k\ren{P_{i-1,k}P_{i,t}}+\ren{P_{i-1,n-1}P_{i,0}}+(q+q^{-1})\ren{P_{i-1,n-1}P_{i,1}}\\
     +&\sum_{t=2}^{n-1}\ren{P_{i-1,n-1}P_{i,t}}+\sum_{t=1}^{n}\ren{P_{i-1,n}P_{i,t}}.
\end{split}
\end{equation*}

For any two elements $f$, $h$ in the quantum torus algebra, let us denote
\begin{equation*}
    \qcom{f}{h}=\frac{q^{1/2}fh-q^{-1/2}hf}{q-q^{-1}}.
\end{equation*}

Then the following equations can be verified directly:
\begin{equation}\label{eq:qcom}
    \qcom{f}{h}=\begin{cases}
        q^{-1/2}fh, &\text{if }fh=qhf,\\
        0, &\text{if }fh=q^{-1}hf,\\
        -q^{3/2}fh, &\text{if }fh=q^{-3}hf,\\
        (q+q^{-1})q^{-3/2}fh, &\text{if }fh=q^3hf.
    \end{cases}
\end{equation}

Recall \eqref{braid}. We have 
\begin{equation*}
    \iota(\bT_i(B_{i-1}))=\qcom{\iota(B_i)}{\iota(B_{i-1})}=\sum_{t=0}^n\sum_{k=0}^n\qcom{P_{i,t}}{P_{i-1,k}}.
\end{equation*}

Recall commuting relations \eqref{eq:com1} and \eqref{eq:com2}. Thanks to \eqref{eq:qcom}, it is now direct to verify  
\begin{equation*}
    \iota(\bT(B_{i-1}))=\cT_i(\iota(B_{i-1})).
\end{equation*}

By \eqref{eq:T2} and \eqref{eq:tin} one has
\begin{equation*}
    \cT_i(\iota(k_{i-1}))=\cT_i(-X_{i-1,0}P_{i-1,n})=-C_{i-1}C_i=\iota(\bT_i(k_{i-1})).
\end{equation*}

Assume $1\leq i<n$. We next verify
\begin{equation*}
    \cT_i(\iota(B_{i+1}))=\iota(\bT_i(B_{i+1})),\quad\text{and}\quad \cT_i(\iota(k_{i+1}))=\iota(\bT_i(k_{i+1})).
\end{equation*}

By \eqref{eq:T3} we have
\begin{equation*}
    \begin{split}
        \cT_i(P_{i+1,0}+P_{i+1,1})=&\cT_i((1+q^{-1}X_{i+1,1})X_{i+1,0})\\
        =&q^{-1/2}(1+q^{-1}X_{i,0}X_{i,n}W_{i,n-2}C_i^{-1})W_{i,n-1}^{-1}X_{i+1,0}C_i\\
        =&q^{-3/2}X_{i,0}X_{i,n}(P_{i,n-1}+W_{i,n-2})W_{i,n-1}^{-1}X_{i+1,0}\\
        =&\ren{P_{i,0}P_{i+1,1}}.
    \end{split}
\end{equation*}

Still by \eqref{eq:T3} we have
\begin{align}
        \cT_i(P_{i+1,2})=&\cT_i(X_{i+1,1}X_{i+1,2}X_{i+1,0})\notag\\
        =&q^{-1/2}X_{i,0}X_{i+1,1}W_{i,n-2}C_i^{-1}(1+q^{-1}X_{i,1})X_{i+1,2}W_{i,n-2}^{-1}X_{i+1,0}C_i\notag\\
        =&q^{-1/2}X_{i,0}X_{i+1,1}X_{i+1,2}(1+qX_{i,1})X_{i+1,0}\label{eq:tp1}\\
        =&\ren{P_{i,0}P_{i+1,2}}+\ren{P_{i,1}P_{i+1,2}}.\label{eq:tp2}
\end{align}
Here the third equality follows from \eqref{eq:wxx}.

Then by \eqref{eq:T3} and \eqref{eq:tp1}, for $3\leq k\leq n$ one has
\begin{equation}\label{eq:tp3}
    \begin{split}
        \cT_i(P_{i+1,k})=&\cT_i(q^{k-2}P_{i+2,2}X_{i+1,3}\cdots X_{i+1,k})\\
        =&q^{k-5/2}X_{i,0}X_{i+1,1}X_{i+1,2}X_{i+1,0}(1+qX_{i,1})W_{i,1}^{-1}X_{i+1,2}\cdots X_{i+1,k}W_{i,k-1}\\
        =&\sum_{t=0}^{k-1}\ren{P_{i,t}P_{i+1,k}}.
    \end{split}
\end{equation}

Therefore we have
\begin{equation*}
\cT_i(\iota(B_{i+1}))=\cT_i(\sum_{k=0}^nP_{i+1,k})=\sum_{k=1}^n\sum_{t=0}^{k-1}\ren{P_{i,t}P_{i+1,k}}.
\end{equation*}

On the other hand, we have
\begin{equation*}
    \iota(\bT_i(B_{i+1}))=\qcom{\iota(B_i)}{\iota(B_{i+1})}=\sum_{t=0}^n\sum_{k=0}^n\qcom{P_{i,t}}{P_{i+1,k}}.
\end{equation*}
Here each term in the summation can be computed by \eqref{eq:qcom} and commuting relations \eqref{eq:com1} and \eqref{eq:com2}. It is now direct to verify 
\[
\cT_i(\iota(B_{i+1}))=\iota(\bT_i(B_{i+1}).
\]

By \eqref{eq:T3} and \eqref{eq:tp3} we have 
\[
\cT_i(\iota(k_{i+1}))=\cT_i(P_{i+1,n}X_{i+1,0})=-C_iC_{i+1}=\iota(\bT(k_{i+1})).
\]

Assume $1\leq i\leq n$. Finally we need to verify
\begin{equation*}
    \cT_i(\iota(B_i))=\iota(\bT_i(B_i)),\quad\text{and}\quad \cT_i(\iota(k_i))=\iota(\bT_i(k_i)).
\end{equation*}

For $1\leq k\leq n-1$, we claim 
\begin{equation}\label{claim}
    \cT_i(P_{i,k})=W_{i,k-1}^{-1}-W_{i,k}^{-1}.
\end{equation}

We prove the claim by induction on $k$. Firstly, by \eqref{eq:T1} we have
\begin{equation*}
    \cT_i(P_{i,1})=\cT_i(qX_{i,0}X_{i,1})=X_{i,0}^{-1}(1+qX_{i,1}^{-1})^{-1}=W_{i,0}^{-1}-W_{i,1}^{-1}.
\end{equation*}

Suppose the equation \eqref{claim} holds for $k$ ($1<k<n-1$). Then by \eqref{eq:T1} we have
\begin{equation*}
\begin{split}
\cT_i(P_{i,k+1})=&\cT_i(q^{-1}X_{i,k+1}P_{i,k})\\
=&q^{-1}W_{i,k+1}^{-1}X_{i,k+1}W_{i,k-1}(W_{i,k-1}^{-1}-W_{i,k}^{-1})\\
=&q^{-1}W_{i,k+1}^{-1}X_{i,k+1}(W_{i,k}-W_{i,k-1})W_{i,k}^{-1}\\
=&W_{i,k+1}^{-1}(W_{i,k+1}-W_{i,k})W_{i,k}^{-1}\\
=&W_{i,k}^{-1}-W_{i,k+1}^{-1}.
\end{split}
\end{equation*}
Hence the claim is proved.

Then we have
\begin{equation*}
\begin{split}
    \cT_i(P_{i,n})=&\cT_i(q^{-1}X_{i,n}P_{i,n-1})\\=&q^{-1}X_{i,0}X_{i,n}W_{i,n-2}C_i^{-1}(W_{i,n-2}^{-1}-W_{i,n-1}^{-1})\\
    =&q^{-1}X_{i,0}X_{i,n}(1-(W_{i,n-1}-qX_{i,n}^{-1}X_{i,0}^{-1}C_i)W_{i,n-1}^{-1})C_i^{-1}\\
    =&W_{i,n-1}^{-1}.
\end{split}
\end{equation*}

Therefore
\begin{equation*}
    \begin{split}
        \cT_i(\iota(B_{i}))=&\cT_i(\sum_{k=0}^nP_{i,k})
        =W_{i,n-1}C_i^{-1}+X_{i,0}^{-1}
        =W_{i,n}C_i^{-1}=\iota(\bT(B_i)),
    \end{split}
\end{equation*}
and
\begin{equation*}
    \cT_i(\iota(k_i))=-\cT_i(X_{i,0}P_{i,n})=C_i^{-1}=\iota(\bT(k_i)).
\end{equation*}

We complete the proof.
\end{proof}

One consequence of Theorem \ref{thm:braid} is that the integral forms of $\imath$quantum groups, considered in Section \ref{sec:3}, are compatible with our cluster realisations.

Recall the algebra embedding $\iota$ in Theorem \ref{thm:emb}, and the integral form $\ZtUi$ in Definition \ref{def:int}.

\begin{cor}\label{cor:emq}
Under the embedding $\iota$, we have
\begin{equation}
    \ZtUi=\tUi\cap \mathcal{O}_q(\mathcal{X}_{|\Sigma_n|}).
\end{equation}
Therefore $\iota$ restricts to an $\A$-algebra embedding $\ZtUi\hookrightarrow \mathcal{O}_q(\mathcal{X}_{|\Sigma_n|})$. 
\end{cor}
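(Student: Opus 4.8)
The plan is to prove the two inclusions $\ZtUi\subseteq\tUi\cap\mathcal{O}_q(\mathcal{X}_{|\Sigma_n|})$ and $\tUi\cap\mathcal{O}_q(\mathcal{X}_{|\Sigma_n|})\subseteq\ZtUi$ separately, identifying $\tUi$ with its image under $\iota$ inside $\T_{\Sigma_n}$ throughout.

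For the first inclusion I would use the minimality characterisation of $\ZtUi$ in Lemma \ref{le:inti}. By the proof of Proposition \ref{prop:qcl} the generators $\iota(B_i)$ and $\iota(k_i)=-C_i$ already lie in the $\A$-form $\mathcal{O}_q(\mathcal{X}_{|\Sigma_n|})$, so $\iota(\tUi)\cap\mathcal{O}_q(\mathcal{X}_{|\Sigma_n|})$ is an $\A$-subalgebra of $\iota(\tUi)$ containing all $\iota(B_i),\iota(k_i)$ (it is an $\A$-algebra as the intersection of a $\Qq$-subalgebra with an $\A$-subalgebra inside $\T_{\Sigma_n}$). By Theorem \ref{thm:braid} the automorphism $\cT_i$ preserves $\iota(\tUi)$ and restricts there to $\iota\circ\bT_i\circ\iota^{-1}$; being a quasi-cluster automorphism it also preserves $\mathcal{O}_q(\mathcal{X}_{|\Sigma_n|})$. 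Hence $\tUi\cap\iota^{-1}\bigl(\mathcal{O}_q(\mathcal{X}_{|\Sigma_n|})\bigr)$ is an $\A$-subalgebra of $\tUi$ containing $B_i,k_i$ and stable under every $\bT_i$, so by Lemma \ref{le:inti} it contains $\ZtUi$; that is, $\iota(\ZtUi)\subseteq\mathcal{O}_q(\mathcal{X}_{|\Sigma_n|})$.

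For the reverse inclusion I would take $x\in\tUi$ with $\iota(x)\in\mathcal{O}_q(\mathcal{X}_{|\Sigma_n|})\subseteq\AT_{\Sigma_n}$ and expand $x=\sum c_{\mathbf a,\mathbf b}B_{\mathbf i}(\mathbf a)k(\mathbf b)$ in the PBW-basis of Proposition \ref{prop:PBWb}, with $c_{\mathbf a,\mathbf b}\in\Qq$. The leading-term analysis in the proof of Proposition \ref{prop:inj} shows that the elements $\iota(B_{\mathbf i}(\mathbf a)k(\mathbf b))$, written as Laurent polynomials in $\T_{\Sigma_n}$, have pairwise distinct leading monomials, with leading coefficients of the form $\pm q^{m/2}$ ($m\in\mathbb{Z}$), hence units in $\A$. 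Ordering the finitely many basis elements occurring in $x$ by their leading monomials and peeling off the largest one, its leading monomial appears in $\iota(x)$ with coefficient the product of the corresponding $c_{\mathbf a,\mathbf b}$ and an $\A$-unit; since $\iota(x)\in\AT_{\Sigma_n}$ this forces $c_{\mathbf a,\mathbf b}\in\A$, and iterating shows every $c_{\mathbf a,\mathbf b}\in\A$. By Corollary \ref{cor:Abase} this gives $x\in\ZtUi$. The two inclusions together yield the identity, and the restricted $\A$-algebra embedding $\ZtUi\hookrightarrow\mathcal{O}_q(\mathcal{X}_{|\Sigma_n|})$ is then immediate.

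The point requiring the most care is the assertion that $\cT_i$ preserves the $\A$-form $\mathcal{O}_q(\mathcal{X}_{|\Sigma_n|})$. Cluster mutations and the relabelling $S_i$ obviously do, but $M_i$ rescales a frozen variable by the central element $C_i$, which involves unfrozen variables, so $M_i$ in isolation is not a renormalization of frozen variables in the naive sense; one must instead use that $C_i^{\pm1}$ is a central Laurent monomial — hence lies in $\mathcal{O}_q(\mathcal{X}_{|\Sigma_n|})$ and remains a monomial under all mutations — and that $\cT_i=M_i\circ S_i\circ\widetilde{\mu}_i$ is an automorphism of the ambient cluster $\mathcal{X}$-variety, so the composite, not $M_i$ alone, carries the algebra of global functions to itself. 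The triangularity bookkeeping in the second inclusion is routine once one fixes a notion of leading monomial (e.g.\ via a generic linear functional on exponent vectors) compatible with multiplication, which is exactly what the computation in the proof of Proposition \ref{prop:inj} supplies.
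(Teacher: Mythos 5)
Your proposal is correct and follows essentially the same route as the paper: the forward inclusion via Lemma \ref{le:inti} combined with Proposition \ref{prop:qcl} and the quasi-cluster nature of the $\bT_i$ from Theorem \ref{thm:braid}, and the reverse inclusion via the leading-term triangularity of the PBW-basis images established in the proof of Proposition \ref{prop:inj}. Your extra remark that $M_i$ alone is not a frozen-variable renormalization, but that $C_i^{\pm1}$ is a central Laurent monomial so the composite $\cT_i$ still preserves $\mathcal{O}_q(\mathcal{X}_{|\Sigma_n|})$, is a correct and worthwhile clarification of a point the paper leaves implicit.
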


\begin{proof}
    By Proposition \ref{prop:qcl}, the intersection $\tUi\cap \mathcal{O}_q(\mathcal{X}_{|\Sigma_n|})$ contains elements $B_i$, $k_i$, for $1\leq i\leq n$. By Theorem \ref{thm:braid}, the intersection is invariant under braid group symmetries $\bT_i$, for $1\leq i\leq n$. Thanks to Lemma \ref{le:inti}, we have $\ZtUi\subseteq \tUi\cap \mathcal{O}_q(\mathcal{X}_{|\Sigma_n|})$. To show the inclusion of another direction, we take the reduced expression $\mathbf{i}$ as in Proposition \ref{prop:inj}. Take any $x\in \tUi\cap \mathcal{O}_q(\mathcal{X}_{|\Sigma_n|})$. Expand $x$ in terms of PBW-basis (of $\imath$quantum groups) associated with $\mathbf{i}$ (see \eqref{eq:PBWb}), say $x=\sum_bc_bb$. It follows from the proof of Proposition \ref{prop:inj}, that the coefficient of the leading monomial of $\iota(b)$ belongs to $\pm q^{\mathbb{Z}}$. Since by the assumption, $\iota(x)$ belongs to the quantum torus algebra $\T_{\Sigma_n}$, we deduce that each coefficient $c_b$ belongs to $\A$. This completes the proof.
\end{proof}

Recall the central element $C_i$ in \eqref{eq:cen} and the integral form $\ZdUi$ in \eqref{eq:intd}.

\begin{cor}\label{cor:cen}
    The map $\iota$ descends to an $\A$-algebra embedding
    \begin{equation}
        \ZdUi\hookrightarrow \mathcal{O}_q(\mathcal{X}_{|\Sigma_n|})/(C_i-1,1\leq i\leq n).
    \end{equation}
\end{cor}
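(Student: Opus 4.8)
The plan is to deduce this corollary from the previous two results by a straightforward compatibility-with-central-reduction argument. Recall from Corollary~\ref{cor:fui} that $\iota$ induces an embedding $\dUi\hookrightarrow\FOX/(C_i-1\mid 1\leq i\leq n)$, and from Corollary~\ref{cor:emq} that $\iota$ restricts to an $\A$-algebra embedding $\ZtUi\hookrightarrow\OO_q(\X_{|\Sigma_n|})$. The task is to check that, after quotienting by the relations $C_i=1$, these two statements combine to give the asserted embedding of $\ZdUi=\pi^\imath(\ZtUi)$.

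First I would observe that, by \eqref{eq:cen}, the central element $C_i$ of $\T_{\Sigma_n}$ is a renormalized Laurent monomial, hence lies in $\OO_q(\X_{|\Sigma_n|})$, so the quotient ring $\OO_q(\X_{|\Sigma_n|})/(C_i-1\mid 1\leq i\leq n)$ makes sense and embeds into $\FOX/(C_i-1\mid 1\leq i\leq n)$ after base change to $\Qq$ (using that $\FOX=\Qq\otimes_\A\OO_q(\X_{|\Sigma_n|})$ and that the $C_i$ are central, so tensoring is exact in the relevant sense — more precisely, $\OO_q(\X_{|\Sigma_n|})/(C_i-1)\to\Qq\otimes_\A(\OO_q(\X_{|\Sigma_n|})/(C_i-1))$ is injective since the source is torsion-free over $\A$). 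Next, under $\iota$ one has $\iota(k_i)=-C_i$, so the kernel of the central reduction $\pi^\imath:\tUi\to\dUi$, which is generated by $k_i+1$, maps into the ideal generated by $C_i-1$. Therefore $\iota$ descends to a $\Qq$-algebra map $\bar\iota:\dUi\to\FOX/(C_i-1)$, which is exactly the embedding of Corollary~\ref{cor:fui}, and it sends $\ZdUi=\pi^\imath(\ZtUi)$ into the image of $\OO_q(\X_{|\Sigma_n|})$ in the quotient, i.e.\ into $\OO_q(\X_{|\Sigma_n|})/(C_i-1)$, by Corollary~\ref{cor:emq}.

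It then remains to upgrade this to injectivity on the integral form. Since $\bar\iota$ is already injective on all of $\dUi$ by Corollary~\ref{cor:fui}, its restriction to the $\A$-submodule $\ZdUi$ is automatically injective; the only genuine content is that the image lands in the $\A$-subalgebra $\OO_q(\X_{|\Sigma_n|})/(C_i-1)$ rather than merely in its $\Qq$-span, and this follows because $\ZdUi$ is generated as an $\A$-algebra by the $B_i$ (using the PBW $\A$-basis $\{B_{\mathbf{i}}(\mathbf{a})\}$ from Corollary~\ref{cor:iau}, together with the fact that the $k_i$ act as $\mp 1$), and $\iota(B_i)\in\OO_q(\X_{|\Sigma_n|})$ by Proposition~\ref{prop:qcl}, so their images lie in the quotient $\A$-algebra. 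Concretely: $\ZdUi$ is spanned over $\A$ by $\{\pi^\imath(B_{\mathbf{i}}(\mathbf{a}))\mid\mathbf{a}\in\mathbb{N}^r\}$, each $B_{\mathbf{i}}(\mathbf{a})$ is an $\A$-polynomial in the $B_i$ and $k_i^{\pm1}$ by Corollary~\ref{cor:Abase}, and applying $\bar\iota$ gives an $\A$-polynomial in $\iota(B_i)\in\OO_q(\X_{|\Sigma_n|})$ and $(-C_i)^{\pm1}$, which reduces to an element of $\OO_q(\X_{|\Sigma_n|})/(C_i-1)$.

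I do not expect a serious obstacle here — this corollary is essentially a bookkeeping consequence of Corollaries~\ref{cor:fui} and~\ref{cor:emq}. The one point that requires a moment's care is verifying that forming the quotient by the central relations $C_i-1$ is compatible with the base change between $\A$ and $\Qq$, i.e.\ that $\OO_q(\X_{|\Sigma_n|})/(C_i-1\mid 1\leq i\leq n)$ genuinely sits inside $\FOX/(C_i-1\mid 1\leq i\leq n)$; this is where I would be slightly careful, using that the quantum cluster algebra is a torsion-free (indeed free) $\A$-module and that the $C_i-1$ form a regular-enough sequence of central elements, so that no unexpected $q$-torsion is introduced in the quotient. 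Once that is in place, the statement follows formally.
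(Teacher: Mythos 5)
Your argument is correct and matches the paper's (unstated) reasoning: the corollary is an immediate consequence of Corollary \ref{cor:fui} and Corollary \ref{cor:emq} together with the facts that $\iota(k_i)=-C_i$ and that $\ker\pi^\imath\cap\ZtUi$ is the ideal of $\ZtUi$ generated by the $k_i+1$ (visible from the $\A$-basis of Corollary \ref{cor:Abase}). The only adjustment I would make is that the closing torsion-freeness/regular-sequence discussion is dispensable: injectivity of the descended map $\ZdUi\to\mathcal{O}_q(\mathcal{X}_{|\Sigma_n|})/(C_i-1)$ follows directly by composing with the natural map to $\FOX/(C_i-1)$ and invoking the injectivity in Corollary \ref{cor:fui}, so one never needs to decide whether $\mathcal{O}_q(\mathcal{X}_{|\Sigma_n|})/(C_i-1)\to\FOX/(C_i-1)$ is itself injective.
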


\subsection{Cyclic symmetries}\label{sec:rho}

Recall in Section \ref{sec:ace} that the braid group action associated with a Coxeter element gives a cyclic symmetry of the $\imath$quantum group. In this subsection, we show that, based on our cluster realisation, this cyclic symmetry corresponds to rotating the quiver.

Let us add a new vertex $X_0$ to the quiver $\Sigma_n$, add dashed arrows $X_{n,0}\dasharrow X_0$, $X_0\dasharrow X_{1,0}$, and add arrows $X_{1,1}\rightarrow X_0$, $X_0\rightarrow X_{n,n}$ of weight one. The resulting quiver is denoted by $\Sigma_n'$.

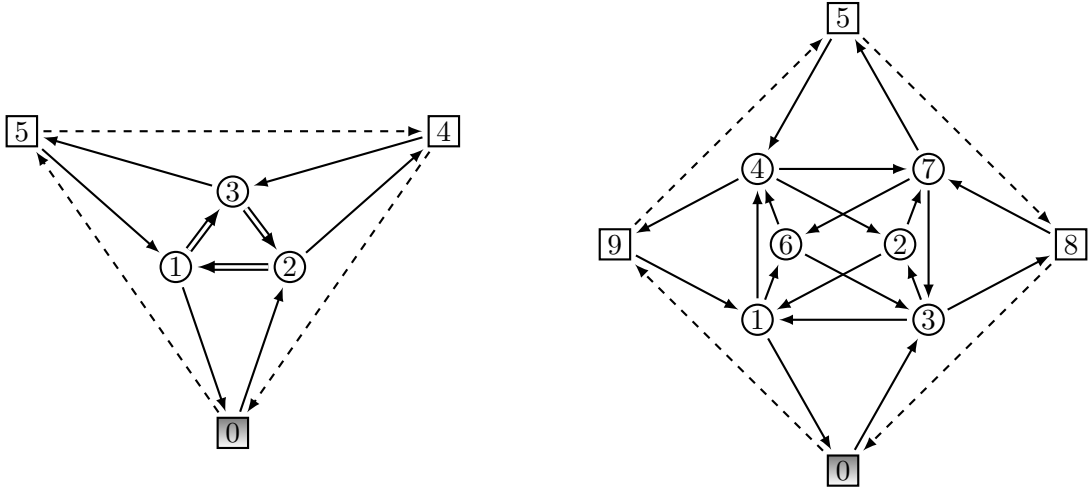
\begin{figure}[h]
    \centering
    \begin{tikzpicture}[every node/.style={inner sep=0, minimum size=0.4cm, thick, draw, fill=white}, thick, x=0.75cm, y=1cm]
     \node (1) at (-14.4,1.5) [draw] {$\tiny{5}$};
     \node (4) at (-7,1.5) [draw] {$\tiny{4}$};
     \node (2) at (-11.7,-0.3) [draw, circle] {$\tiny{1}$};
     \node (3) at (-10.7,0.7) [draw, circle] {$\tiny{3}$};
     \node (5) at (-9.7,-0.3) [draw, circle] {$\tiny{2}$};
     \node (0) at (-10.7,-2.5) [draw, shade] {$\tiny{0}$};

     \begin{scope}[>=latex]
     \qdarrow{1}{4};
     \qdarrow{4}{0};
     \qdarrow{0}{1};
     \qddarrow{3}{5};
     \qddarrow{5}{2};
     \qddarrow{2}{3};
     \qarrow{4}{3};
     \qarrow{3}{1};
     \qarrow{1}{2};
     \qarrow{2}{0};
     \qarrow{0}{5};
     \qarrow{5}{4};

     \end{scope}
    
        \node (1) at (-4,0) [draw] {$\tiny{9}$};
        \node (2) at (0,3) [draw] {$\tiny{5}$};
        \node (3) at (4,0) [draw] {$\tiny{8}$};
        \node (4) at (0,-3) [draw,shade] {$\tiny{0}$};
        \node (5) at (-1.5,1) [draw, circle] {$\tiny{4}$};
        \node (6) at (1.5,1) [draw,circle] {$\tiny{7}$};
        \node (7) at (-1.5,-1) [draw,circle] {$\tiny{1}$};
        \node (8) at (1.5,-1) [draw,circle] {$\tiny{3}$};
        \node (9) at (1,0) [draw, circle] {$\tiny{2}$};
        \node (10) at (-1,0) [draw, circle] {$\tiny{6}$};

        \begin{scope}[>=latex]
            \qdarrow{2}{3};
            \qdarrow{3}{4};
            \qdarrow{4}{1};
            \qdarrow{1}{2};
            \qarrow{1}{7};
            \qarrow{7}{4};
            \qarrow{4}{8};
            \qarrow{8}{3};
            \qarrow{3}{6};
            \qarrow{6}{2};
            \qarrow{2}{5};
            \qarrow{5}{1};
            \qarrow{5}{6};
            \qarrow{6}{8};
            \qarrow{8}{7};
            \qarrow{7}{5};
            \qarrow{5}{9};
            \qarrow{9}{6};
            \qarrow{6}{10};
            \qarrow{10}{5};
            \qarrow{7}{10};
            \qarrow{10}{8};
            \qarrow{8}{9};
            \qarrow{9}{7};
        \end{scope}
    \end{tikzpicture}
    \caption{$\Sigma_2'$-quiver (left) and $\Sigma_3'$-quiver (right)}
    \label{fig:sp}
\end{figure}

It is direct to see that there is an automorphism $\rho_n$ of the quiver $\Sigma_n'$, given by $\rho_n(X_{i,j})=X_{i+1,j}$ for $1\leq j\leq i<n$, $\rho_n(X_{n,j})=X_{n-j+1,n-j+1}$ for $j\neq 0$, $\rho_n(X_{i,0})=X_{i+1,0}$ for $1\leq i<n$, $\rho_n(X_{n,0})=X_0$ and $\rho_n(X_0)=X_{1,0}$.

One can put the quiver $\Sigma_n'$ in a symmetric way, where the map $\rho_n$ can be visualized as rotating the quiver $\Sigma_n'$ by degree $\frac{2\pi}{n+1}$ (except for the vertices $X_{t,(n+1)/2}$, for $(n+1)/2\leq t\leq n$, when $n$ is odd, for which vertices we rotate $\frac{4\pi}{n+1}$ degree). In particular we see that $\rho_n$ has order $n+1$. 

\begin{example}
    Follow the notations of Figure \ref{fig:sp}. For $n=2$, in the $\Sigma_2'$-quiver, the map $\rho_2$ reads
    \begin{equation*}
        \begin{array}{ccc}
            X_5\mapsto X_4,\quad & X_4
            \mapsto X_0, \quad & X_0\mapsto X_5,\\
          X_1\mapsto X_3,\quad  & X_3\mapsto X_2, \quad & X_2\mapsto X_1.
        \end{array}
    \end{equation*}
    For $n=3$, in the $\Sigma_3'$-quiver, the map $\rho_3$ reads
    \begin{equation*}
        \begin{array}{ccccc}
            X_9\mapsto X_5,\quad & X_5\mapsto X_8,\quad & X_8\mapsto X_0, \quad & X_0\mapsto X_9, \quad & X_1\mapsto X_4, \\
            X_4\mapsto X_7 \quad & X_7\mapsto X_3, \quad & X_3\mapsto X_1,\quad & X_2 \mapsto X_6, \quad & X_6\mapsto X_2.
        \end{array}
    \end{equation*}
\end{example}

Set 
\begin{equation*}
M_n'=\ren{\prod_{X\in v(\Sigma_n')}X}\qquad\text{and}\qquad  M_n=\ren{\prod_{X\in v(\Sigma_n)}X}
\end{equation*}
to be the normalized monomials in the quantum torus algebras $\T_{\Sigma'_n}$ and $\T_{\Sigma_n}$, respectively. Here the products are taken over all the generators of the corresponding quantum torus algebras.

It is easy to verify that $M_n'$ is a central element in $\T_{\Sigma_n'}$. There is a surjective algebra homomorphism
\begin{equation*}
    \T_{\Sigma_n'}\longrightarrow\T_{\Sigma_n},
\end{equation*}
given by $X\mapsto X$ for $X\neq X_0$, and $X_0\mapsto M_n^{-1}$. The kernel of this map is the two-sided ideal of $\T_{\Sigma_n'}$ generated by $M_n'-1$.

The cyclic symmetry $\rho_n$ induces an algebra automorphism of $\T_{\Sigma_n'}$, which clearly preserves the element $M_n'-1$. Therefore it descends to an algebra automorphism on $\T_{\Sigma_n}$, which we still denote by $\rho_n$.

Recall the automorphism $\bT_c$ of $\tUi$ in Section \ref{sec:ace}.

\begin{prop}\label{prop:rho}
    The map $\rho_n$ restricts to the subalgebra $\tUi$ under the embedding $\iota$. Moreover it coincides with the automorphism $\bT_c$ on $\tUi$. 
\end{prop}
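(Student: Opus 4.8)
The plan is to reduce the statement to an explicit check on the $n$ generators $B_i$ and $k_i$ of $\tUi$, using the computations already assembled in the proof of Theorem~\ref{thm:braid}. First I would observe that, by Proposition~\ref{prop:tci}, it suffices to prove the sharper statement $\rho_n(\iota(B_i)) = \iota(\bT_c(B_i))$ and $\rho_n(\iota(k_i)) = \iota(\bT_c(k_i))$ for $1 \le i \le n$, and in fact only for the assignment $\bT_c(B_i) = B_{i+1}$ on generators; once both sides agree on algebra generators, the two algebra automorphisms agree on all of $\tUi$, and in particular $\rho_n$ restricts to $\iota(\tUi)$. (Strictly, $\rho_n$ is an automorphism of $\T_{\Sigma_n}$ and one must also note that $\rho_n(\iota(k_i)) = \iota(\bT_c(k_i))$ lands again in the image; this is automatic from the same generator-level identities.) By Proposition~\ref{prop:tci}, equation~\eqref{eq:Tcy}, we have $\bT_c(B_i) = B_{i+1}$ for $1 \le i \le n+1$ with indices read modulo $n+1$, where $B_{n+1} := \bT_c(B_n)$; so the target is $\iota(B_{i+1})$ for $1 \le i < n$, and for $i = n$ we need $\rho_n(\iota(B_n))$ to equal $\iota(\bT_c(B_n))$, which I will handle via the relabelling relation \eqref{eq:relabel} together with $\bT_c^{\,2}(B_n) = B_1$ from the proof of Proposition~\ref{prop:tci}.

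The core of the argument is then a direct translation of the quiver automorphism $\rho_n$ through the formulas \eqref{eq:em1}, \eqref{eq:em2}. Recall $\iota(B_i) = \sum_{k=0}^n \ren{X_{i,0}X_{i,1}\cdots X_{i,k}} = \sum_{k=0}^n P_{i,k}$ in the notation \eqref{eq:pik}. For $1 \le i < n$, the definition $\rho_n(X_{i,j}) = X_{i+1,j}$ for $1 \le j \le i < n$, $\rho_n(X_{i,0}) = X_{i+1,0}$, sends each Laurent monomial $X_{i,0}\cdots X_{i,k}$ to $X_{i+1,0}\cdots X_{i+1,k}$ for $0 \le k \le i$; for $i < k \le n$ one invokes \eqref{eq:relabel} to re-express $X_{i,k}$ in terms of the vertices appearing in the $\rho_n$-formula (these are the vertices shared with lower loops), after which $\rho_n$ again carries $X_{i,0}\cdots X_{i,k}$ to $X_{i+1,0}\cdots X_{i+1,k}$. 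Since $\rho_n$ is a quiver automorphism it preserves the $*$-structure on $\T_{\Sigma_n'}$ (and hence on the quotient $\T_{\Sigma_n}$), so it intertwines renormalized monomials: $\rho_n(\ren{M}) = \ren{\rho_n(M)}$. Therefore $\rho_n(P_{i,k}) = P_{i+1,k}$ and summing over $k$ gives $\rho_n(\iota(B_i)) = \iota(B_{i+1})$. The same bookkeeping applied to $\ren{X_{i,0}^2 X_{i,1}\cdots X_{i,n}}$ gives $\rho_n(\iota(k_i)) = \iota(k_{i+1})$, using that $\rho_n$ fixes the scalar $q^n$ (it is a monomial-permuting automorphism, not involving the bar-involution on $q$).

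The delicate case is $i = n$, where $\rho_n$ does not simply shift the index: $\rho_n(X_{n,j}) = X_{n-j+1,n-j+1}$ for $j \ne 0$, $\rho_n(X_{n,0}) = X_0$, $\rho_n(X_0) = X_{1,0}$. Here I would pass to the extended quiver $\Sigma_n'$ and compute $\rho_n(\iota(B_n))$ in $\T_{\Sigma_n'}$, then project back to $\T_{\Sigma_n}$ via $X_0 \mapsto M_n^{-1}$. Under $\rho_n$ the monomial $X_{n,0}X_{n,1}\cdots X_{n,k}$ is carried to $X_0 \cdot X_{n,n}X_{n-1,n-1}\cdots X_{n-k+1,n-k+1}$, and using \eqref{eq:relabel} (which identifies $X_{j,0} = X_{j-1,n}$-type coincidences along the loops) together with the substitution $X_0 = M_n^{-1}$ this can be rewritten, after a renormalization, as a sum of the monomials appearing in $\iota(\bT_c(B_n))$. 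Concretely, I would verify directly that $\rho_n(\iota(B_n)) = \iota(B_1)$ fails in general but that $\rho_n(\iota(B_n)) = \iota(\bT_c(B_n))$; the cleanest route is to use $\bT_c = \bT_1\bT_2\cdots\bT_n$ and the already-established quasi-cluster identity $\iota \circ \bT_i = \cT_i \circ \iota$ from Theorem~\ref{thm:braid}, so that $\iota(\bT_c(B_n)) = \cT_1\cdots\cT_n(\iota(B_n))$, and then match this against $\rho_n(\iota(B_n))$ by direct computation in the quantum torus algebra. The main obstacle is precisely this last matching: the explicit Laurent-monomial expressions for $\cT_1\cdots\cT_n(\iota(B_n))$ coming from Lemma~\ref{le:T} are somewhat involved, and one must carefully track the central corrections $C_i$ and the effect of the quotient $X_0 \mapsto M_n^{-1}$; I expect this is where the bulk of the verification lies, though it is ultimately a finite, mechanical check (and can be cross-checked against the $n=2,3$ cases displayed in the examples). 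Once the generator identities hold, uniqueness of algebra automorphisms extending a map on generators finishes the proof.
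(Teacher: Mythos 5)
Your overall strategy is the paper's: reduce to checking $\rho_n\circ\iota=\iota\circ\bT_c$ on the generators $B_i,k_i$, handle $1\le i<n$ by observing that $\rho_n$ literally shifts the $i$-th loop to the $(i+1)$-st one (so $\rho_n(P_{i,k})=P_{i+1,k}$ and $\rho_n(\iota(k_i))=\iota(k_{i+1})$), and isolate $i=n$ as the only nontrivial case. Where you diverge is in how you propose to compute $\iota(\bT_c(B_n))$: you would write it as $\cT_1\cdots\cT_n(\iota(B_n))$ via Theorem \ref{thm:braid} and then iterate the mutation formulas of Lemma \ref{le:T}, a step you correctly flag as the bulk of the work but leave unexecuted. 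The paper instead proves by induction on $k$ (using the $q$-commutator table \eqref{eq:qcom} and the commutation relations \eqref{eq:com1}, \eqref{eq:com2}, in the spirit of Proposition \ref{prop:inj}) the closed formula
\[
\bT_1\bT_2\cdots\bT_{k-1}(B_k)=\sum_{t_1<t_2<\cdots<t_k}\ren{P_{1,t_1}P_{2,t_2}\cdots P_{k,t_k}},
\]
from which $\iota(\bT_c(B_n))=\iota\bigl(\bT_1\cdots\bT_{n-1}(-k_n^{-1}B_n)\bigr)$ collapses to $\ren{M_n^{-1}}+\sum_{k=1}^n\ren{M_n^{-1}X_{n,n}X_{n-1,n-1}\cdots X_{n-k+1,n-k+1}}$, visibly equal to $\rho_n(\iota(B_n))$. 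The paper's route avoids composing the rather involved formulas of Lemma \ref{le:T} $n$ times (and works uniformly in $n$, whereas Lemma \ref{le:T} excludes $n=1,2$). Your plan is workable in principle, but the deferred matching is genuinely the content of the proof, so as written this is a correct outline rather than a complete argument; note also that for $i=n$ you still owe the identity $\iota(\bT_c(k_n))=-\ren{M_n^{-2}X_{1,1}\cdots X_{n,n}}$, which the paper gets from $\bT_c(k_n)=(-1)^{n-1}k_1^{-1}\cdots k_n^{-1}$.
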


\begin{proof}
    It suffices to show that maps $\rho_n$ and $\bT_c$ coincide on the generators $B_i$, $k_i$, for $1\leq i\leq n$. We firstly compute $\rho_n(B_i)$ and $\rho_n(k_i)$.

For $1\leq i<n$, we have
\begin{equation}\label{eq:rh1}
    \rho_n(B_i)=\rho_n(\sum_{t=0}^n\ren{X_{i,0}X_{i,1}\cdots X_{i,t}})=\sum_{t=0}^n\ren{X_{i+1,0}X_{i+1,1}\cdots X_{i+1,t}}=B_{i+1},
\end{equation}
and 
\begin{equation}\label{eq:rh2}
    \rho_n(k_i)=-\rho_n(\ren{X_{i,0}^2X_{i,1}\cdots X_{i,n}})=-\ren{X_{i+1,0}^2X_{i+1,1}\cdots X_{i+1,n}}=k_{i+1}.
\end{equation}

We also have
\begin{equation}\label{eq:rh4}
    \rho_n(B_n)=\rho_n(\sum_{t=0}^n\ren{X_{n,0}X_{n,1}\cdots X_{n,t}})=M_n^{-1}+\sum_{t=1}^n\ren{M_n^{-1}X_{n,n}X_{n-1,n-1}\cdots X_{n-k+1,n-k+1}},
\end{equation}
and
\begin{equation}\label{eq:rh3}
    \rho_n(k_n)=-\rho_n(\ren{X_{n,0}^2X_{n,1}\cdots X_{n,n}})=-\ren{M_n^{-2}X_{1,1}X_{2,2}\cdots X_{n,n}}.
\end{equation}

We next consider $\bT_c(B_i)$ and $\bT_c(k_i)$, for $1\leq i\leq n$.

For $1\leq i<n$, we have $\bT_c(B_i)=B_{i+1}$ by the proof of Proposition \ref{prop:tci}.

We also have
\begin{equation*}
    \bT_1\bT_2\cdots\bT_n(k_i)=\bT_1\cdots\bT_{i+1}(k_i)=\bT_1\cdots\bT_i(-k_ik_{i+1})=\bT_{1}\cdots\bT_{i-1}(k_{i+1})=k_{i+1}.
\end{equation*}

Comparing with \eqref{eq:rh1} and \eqref{eq:rh2}, we deduce that $\rho_n(B_i)=\bT_c(B_i)$ and $\rho_n(k_i)=\bT_c(B_i)$, for $1\leq i<n$.

It remains to compute $\bT_1\bT_2\cdots\bT_n(B_n)$ and $\bT_1\bT_2\cdots\bT_n(k_n)$. It follows from easy computation that
\begin{equation*}
    \bT_c(k_n)=\bT_1\cdots\bT_{n-1}(k_n^{-1})=\bT_1\cdots\bT_{n-2}(-k_{n-1}^{-1}k_{n}^{-1})=\cdots=(-1)^{n-1}k_1^{-1}k_2^{-1}\cdots k_n^{-1}.
\end{equation*}

Recall the monomial $P_{i,j}$ in \eqref{eq:pik}. Then under the embedding $\iota$, in $\T_{\Sigma_n}$ we have
\begin{equation}\label{eq:Tk}
    \bT_c(k_n)=-\ren{P_{1,0}^{-2}P_{2,1}^{-2}\cdots P_{n,n-1}^{-2}X_{1,1}^{-1}X_{2,2}^{-1}\cdots X_{n,n}^{-1}}=-\ren{M_n^{-2}X_{1,1}X_{2,2}\cdots X_{n,n}}.
\end{equation}
Comparing with \eqref{eq:rh3}, we deduce that $\rho_n(k_n)=\bT_c(k_n)$.

Finally, we compute the image of $\bT_c(B_n)$ in under $\iota$. By the similar argument as in the proof of Proposition \ref{prop:inj}, with commuting relations \eqref{eq:com1} and \eqref{eq:com2}, one can verify the following formula by induction on $k$,
    \begin{equation*}
        \bT_1\bT_2\cdots \bT_{k-1}(B_k)=\sum_{t_1<t_2<\cdots<t_k}\ren{P_{1,t_1}P_{2,t_2}\cdots P_{k,t_k}},\quad\text{for }1\leq k\leq n.
    \end{equation*}

    In particular, we have
    \begin{equation}\label{eq:TB}
    \begin{split}
        \bT_1\bT_2\cdots\bT_{n-1}(B_n)&=\ren{P_{1,0}P_{2,1}\cdots P_{n,n-1}}\\
    &+\sum_{k=1}^n\ren{P_{1,0}P_{2,1}\cdots P_{n,n-1}X_{n,n}X_{n-1,n-1}\cdots X_{n-k+1,n-k+1}}\\
    &=\ren{M_nX_{1,1}^{-1}\cdots X_{n,n}^{-1}}+\sum_{k=1}^n\ren{M_nX_{1,1}^{-1}\cdots X_{n-k.n-k}^{-1}}.
    \end{split}
    \end{equation}

    By \eqref{eq:Tk} and \eqref{eq:TB}, we have
    \begin{equation*}
    \begin{split}
        \bT_c(B_n)&=\bT_1\cdots\bT_{n-1}(-k_n^{-1}B_n)\\
        &=\ren{M_n^{-1}}+\sum_{k=1}^n\ren{M_n^{-1}X_{n,n}X_{n-1,n-1}\cdots X_{n-k+1,n-k+1}},
    \end{split}
    \end{equation*}
    which equals $\rho_n(B_n)$ by \eqref{eq:rh4}. We complete the proof.
\end{proof}

\bibliographystyle{amsplain}

\begin{thebibliography}{10}


\bibitem{BW18}
H.~Bao and W.~Wang, \emph{Canonical bases arising from quantum symmetric pairs}, Invent. Math. \textbf{213} (2018), no.~3, 1099--1177. \MR{3842062}

\bibitem{BZ05}
A.~Berenstein and A.~Zelevinsky, \emph{Quantum cluster algebras}, Adv. Math. \textbf{195} (2005), no.~2, 405--455. \MR{2146350}

\bibitem{BT22}
M.~Bertola and S.~Tarricone, \emph{Stokes manifolds and cluster algebras}, Comm. Math. Phys. \textbf{390} (2022), no.~3, 1413--1457. \MR{4389086}



\bibitem{Ch07}
L.~Chekhov, \emph{Teichm\"{u}ller theory of bordered surfaces}, SIGMA Symmetry Integrability Geom. Methods Appl. \textbf{3} (2007), Paper 066, 37. \MR{2322793}

\bibitem{CS23}
L.~Chekhov and M.~Shapiro, \emph{Log-canonical coordinates for symplectic groupoid and cluster algebras}, Int. Math. Res. Not. IMRN (2023), no.~11, 9565--9652. \MR{4597214}

\bibitem{DM21}
B.~Davison and T.~Mandel, \emph{Strong positivity for quantum theta bases of quantum cluster algebras}, Invent. Math. \textbf{226} (2021), no.~3, 725--843. \MR{4337972}

\bibitem{Dob20}
L.~Dobson, \emph{Braid group actions for quantum symmetric pairs of type {AIII}/{AIV}}, J. Algebra \textbf{564} (2020), 151--198. \MR{4137695}

\bibitem{Dub96}
D.~Boris, \emph{Geometry of 2d topological field theories}, arxiv.org/abs/hep-th/9407018 (1994)

\bibitem{DK19}
L.~Dobson and S.~Kolb, \emph{Factorisation of quasi {$K$}-matrices for quantum symmetric pairs}, Selecta Math. (N.S.) \textbf{25} (2019), no.~4, Paper No. 63, 55. \MR{4021849}

\bibitem{FG09a}
V.~Fock and A.~Goncharov, \emph{The quantum dilogarithm and representations of quantum cluster varieties}, Invent. Math. \textbf{175} (2009), no.~2, 223--286. \MR{2470108}

\bibitem{FG03}
\bysame, \emph{Moduli spaces of local systems and higher {T}eichm\"{u}ller theory}, Publ. Math. Inst. Hautes \'{E}tudes Sci. (2006), no.~103, 1--211. \MR{2233852}

\bibitem{FG09}
\bysame, \emph{Cluster ensembles, quantization and the dilogarithm}, Ann. Sci. \'{E}c. Norm. Sup\'{e}r. (4) \textbf{42} (2009), no.~6, 865--930. \MR{2567745}

\bibitem{FZ01}
S.~Fomin and A.~Zelevinsky, \emph{Cluster algebras. {I}. {F}oundations}, J. Amer. Math. Soc. \textbf{15} (2002), no.~2, 497--529. \MR{1887642}

\bibitem{GS22}
A.~Goncharov and L.~Shen, \emph{Quantum geometry of moduli spaces of local systems and representation theory}, arxiv.org/abs/1904.10491. (2022)

\bibitem{Jan96}
J.~Jantzen, \emph{Lectures on quantum groups}, Graduate Studies in Mathematics, vol.~6, American Mathematical Society, Providence, RI, 1996. \MR{1359532}

\bibitem{KP11}
S.~Kolb and J.~Pellegrini, \emph{Braid group actions on coideal subalgebras of quantized enveloping algebras}, J. Algebra \textbf{336} (2011), 395--416. \MR{2802552}

\bibitem{Let99}
G.~Letzter, \emph{Symmetric pairs for quantized enveloping algebras}, J. Algebra \textbf{220} (1999), no.~2, 729--767. \MR{1717368}

\bibitem{Let03}
\bysame, \emph{Quantum symmetric pairs and their zonal spherical functions}, Transform. Groups \textbf{8} (2003), no.~3, 261--292. \MR{1996417}

\bibitem{LW22}
M.~Lu and W.~Wang, \emph{Braid group symmetries on quasi-split {$\imath$}quantum groups via {$\imath$}hall algebras}, Selecta Math. (N.S.) \textbf{28} (2022), no.~5, Paper No. 84, 64. \MR{4486230}

\bibitem{LW22a}
\bysame, \emph{Hall algebras and quantum symmetric pairs {I}: {F}oundations}, Proc. Lond. Math. Soc. (3) \textbf{124} (2022), no.~1, 1--82, With an appendix by Lu. \MR{4389313}

\bibitem{Lu94}
G.~Lusztig, \emph{Introduction to quantum groups}, Progress in Mathematics, vol. 110, Birkh\"{a}user Boston, Inc., Boston, MA, 1993. \MR{1227098}

\bibitem{Sh22}
L.~Shen, \emph{Cluster nature of quantum groups}, arxiv.org/abs/2209.06258. (2022)

\bibitem{SS19}
G.~Schrader and A.~Shapiro, \emph{A cluster realization of {$U_q(\germ {sl}_{\germ n})$} from quantum character varieties}, Invent. Math. \textbf{216} (2019), no.~3, 799--846. \MR{3955710}

\bibitem{Ug99}
M.~Ugaglia, \emph{On a {P}oisson structure on the space of {S}tokes matrices}, Internat. Math. Res. Notices (1999), no.~9, 473--493. \MR{1692595}

\bibitem{Wan21}
W.~Wang, \emph{Quantum symmetric pairs}, arxiv.org/abs/2112.10911. (2011)

\bibitem{Wa21}
H.~Watanabe, \emph{Classical weight modules over {$\imath$}quantum groups}, J. Algebra \textbf{578} (2021), 241--302. \MR{4234802}

\bibitem{WZ22}
W.~Wang and W.~Zhang, \emph{An intrinsic approach to relative braid group symmetries on {$\imath$}quantum groups}, arxiv.org/abs/2201.01803. (2022)

\end{thebibliography}

\end{document}